\newtheorem{theorem}{Theorem}[section]
\newtheorem{corollary}[theorem]{Corollary}
\newtheorem{lemma}[theorem]{Lemma}
\newtheorem{remark}[theorem]{Remark}
\newtheorem{proposition}[theorem]{Proposition}%
\newtheorem{assumption}[theorem]{Assumption}%
\newtheorem{definition}[theorem]{Definition}
\begin{document}

\title[A Variational Framework for the Complexity of PDE Solutions]{A Variational Framework for the Complexity of PDE Solutions}


\author*[1,2]{\fnm{Juan Esteban} \sur{Suarez Cardona}}\email{suarez@math.lmu.de}

\author[4,5,6]{\fnm{Holger} \sur{Boche}}\email{boche@tum.de}

\author[1,2,3,5]{\fnm{Gitta} \sur{Kutyniok}}\email{kutyniok@math.lmu.de}

\affil*[1]{\orgdiv{Department of Mathematics}, \orgname{Ludwig-Maximilians-Universit\"at M\"unchen, } \country{Germany}}
\affil*[2]{\orgname{Munich Center for Machine Learning (MCML), }\country{Germany}}
\affil[3]{\orgdiv{Department of Physics and Technology},\orgname{University of Tromsø, }\country{Norway}}
\affil[4]{\orgdiv{Institute of Theoretical Information Technology}, \orgname{TUM School of Computation, Information and Technology, Technical University of Munich}, \country{Germany}}
\affil[5]{\orgname{Munich Center for Quantum Science and Technology (MCQST)}, \country{Germany}}
\affil[6]{\orgname{Munich Quantum Valley (MQV)}, \country{Germany}}



\abstract{Partial Differential Equations (PDEs) are fundamental mathematical models for describing physical phenomena, yet most PDEs of practical interest cannot be solved analytically and require numerical approximations. The feasibility of such numerical methods, however, is ultimately constrained by the limitations of existing computational models. Since digital computers constitute the primary physical realizations of numerical computations, and Turing machines define their theoretical limits, the question of Turing computability of PDE solutions arises as a problem of fundamental theoretical significance. The Turing computability of PDE solutions provides a rigorous framework to distinguish equations that are, in principle, effectively solvable from those that inherently encode undecidable or non-computable behavior.
Once computability has been established, complexity theory extends the analysis by quantifying the computational resources required to approximate the corresponding PDE solutions. In this work, we present a novel framework based on least-squares variational formulations and their associated gradient flows to analyze the computability and complexity of PDE solutions from an optimization perspective. Our approach enables the approximation of PDE solution operators via discrete gradient flows, linking structural properties of the PDE, such as coercivity, ellipticity, and convexity, to the complexity of their solutions. 

Within this setting, we characterize representation- and discretization-dependent sufficient conditions for regimes where PDEs admit effective, polynomial-time numerical approximations, as well as those exhibiting complexity blowup, where the input data possess polynomial-time complexity, yet the solution itself scales super-polynomially.

In summary, this paper develops a general variational framework for analyzing the computability and computational complexity of classes of PDE solutions. The results provide insights into how structural properties of the underlying PDE and the regularity of the solutions influence its complexity, by establishing sufficient conditions for computability and complexity bounds. Beyond the theoretical characterization, the framework provides principled guidelines for the design of effective numerical methods and contributes to the understanding of the fundamental limitations of digital computation for PDE problems.
}

\keywords{Complexity Blowup, Complexity of PDE Solutions, Variational Formulations, Gradient Flows.}



\maketitle

\section{Introduction}\label{sec1}
For centuries, the modeling and description of physical phenomena have relied on the formulation and analysis of Partial Differential Equations (PDEs). While classical analytical methods have provided closed-form solutions for a limited number of cases, the vast majority of physically relevant PDEs require numerical methods to approximate their solutions and simulate the underlying systems. Entire areas within scientific computing and numerical analysis focus on designing and analyzing these numerical methods, ensuring the well-posedness of the problem, and the accuracy and efficiency of the resulting numerical scheme. Yet, the fundamental question of Turing computability \cite{Church1937-sr} of PDE solutions has only been addressed to a limited extent. In this context, the computability question is, however, relevant because digital computers are the main hardware platforms for numerical calculations, with Turing machines defining their theoretical limits. Thus, tasks that are intractable for Turing machines are necessarily more challenging for digital computers in practice. The central problem is to determine whether the infinite-dimensional solutions of PDEs can, in principle, be approximated effectively on a digital machine and, if so, what computational resources are inherently required. Within the framework of computability analysis and complexity theory, we can distinguish PDEs that are effectively solvable from those that may encode undecidable behavior and quantify the efficiency of approximation when computability is assured. Therefore, addressing the computability and complexity of PDE solutions not only clarifies the limits of current digital hardware for solving PDEs but also highlights classes of equations where novel computational architectures \cite{Handler1967-ty} and new notions of computability may be necessary to ensure feasible numerical methods for solving such PDEs.

Classical analyzes of computability and complexity of PDEs have primarily addressed equations admitting closed-form solutions \cite{Pour-El1997-br, Weihrauch2006-cp, bacho2023complexityblowupsolutionslaplace}. However, for most PDEs arising in real-world applications, such explicit solutions are unavailable \cite{Evans2022-ig, Trefethen2000-su}, motivating the development of new frameworks to study their computability and complexity. Recent advances in variational methods, especially least-squares formulations \cite{Los2023-jo, Cai2020-js}, offer a flexible framework that has already shown success in nonlinear, high-dimensional, and even discontinuous problems \cite{Hu2024-sy, Cardona2023-ma, Suarez2025}.

In this work, we leverage the gradient flow associated with least-squares variational formulations \cite{Ghoussoub2004-ou, Ambrosio2008-qd} to develop a novel framework for analyzing the computability and complexity of PDE solutions. Our approach establishes a direct link between the complexity of solving a PDE and the convergence rates of its discrete gradient flow. This connection relates structural properties of the PDE operator, such as coercivity, ellipticity, and convexity, to the resulting complexity class of its solutions, thereby extending existing results to a broader class of PDEs.

The computability and complexity statements obtained in this framework are representation and discretization-dependent, relying on an effective representation of functions via convergent sequences of polynomials generated by a particular discrete gradient flow. Within this setting, complexity is characterized through the rate of convergence of such effective polynomial approximations and the associated computational cost of constructing them. Moreover, the framework establishes a relation between the loss of regularity of PDE solutions and complexity blowup, where polynomial-time input data produce solutions with super-polynomial complexity \cite{bacho2023complexityblowupsolutionslaplace}. A detailed discussion of the scope of the results and their implications for computability and complexity within our variational framework is provided in Subsection~\ref{subsec:Scope}.

To sketch the main idea of the method presented in this paper, we consider a variational loss, over the domain $\Omega\subseteq \mathbb{R}^d$, of the form
\begin{equation}\label{eq:PDE_loss_int}
\mathcal{L}[u] = \| \mathcal{N}[u] - f\|_{H^a(\Omega)}^2 + \|u|_{\partial\Omega} - g\|_{H^s(\partial\Omega)}^2,
\end{equation}
with $\mathcal{N}:H^k(\Omega)\rightarrow H^a(\Omega)$, $f\in H^a(\Omega)$, and $g\in H^s(\partial\Omega)$. The minimizer of $\eqref{eq:PDE_loss_int}$ solves 
\begin{equation}\label{eq:gen_pde_int}
\left\{\begin{array}{cc}
       \mathcal{N}[u] = f,&\textrm{ in }\Omega, \\
      u= g,&\textrm{ on } \partial\Omega,
\end{array}\right.
\end{equation}
in either a strong or weak sense. We construct a numerical framework that solves the discrete gradient flow of the loss $\mathcal{L}$ and produces an optimal surrogate model $\hat{u}_{\theta^*}\in \mathcal{S}_m(\Omega)$, parameterized by the coefficients $\theta^*\in \Theta_m\cong \mathbb{R}^{d_m}$, with $d_m:=\textrm{dim}(\mathcal{S}_m)$, approximating the minimizer $u^*$ of the loss $\mathcal{L}$ in \eqref{eq:PDE_loss_int}.

We further estimate the convergence rate of the $j$-th iterate $\hat{u}_{\theta^j}$, given by a discrete gradient flow of the form
\begin{equation}
    \theta^{j+1} = \theta^j-\delta\tau \nabla \mathcal{L}[\hat{u}_{\theta^j}],
\end{equation}
against the minimizer $u^*$, and provide bounds on the convergence error
\begin{equation}
\mathcal{R}_{l}(j, \mathrm{dim}(\mathcal{S}_m)) \leq  \|\hat{u}_{\theta^j} - u^*\|_{H^a(\Omega)}^2 \leq\mathcal{R}_u(j, \mathrm{dim}(\mathcal{S}_m)),
\end{equation}
where the functions $\mathcal{R}_u$ and $\mathcal{R}_l$ represent the upper and lower bounds on the convergence rates. 
These bounds allow us to estimate the number of computations required for effective convergence, i.e., $\|\hat{u}_{\theta^j} - u^*\|_{H^a(\Omega)}^2 \leq 2^{-N}$, as a function of the precision $N\in \mathbb{N}$, providing an explicit measure of the computational complexity of the PDE solution. Moreover, the lower bound $\mathcal{R}_l$ on the convergence rate allows us to identify PDEs exhibiting complexity blowup, as formalized in Definition~\ref{def:complexity_blowup} and Remark~\ref{rk:complexity_blowup}.

This framework contributes to the development of a computability and complexity theory for general classes of PDEs, extending to both linear and nonlinear equations without closed-form solutions. Its relevance is illustrated through two case studies: the polynomial-time computability of the Poisson equation \cite{Evans2022-ig} and the intrinsic complexity blowup of the Eikonal equation \cite{Alimov2019-np}.

\section{Related Work}
The literature on complexity theory for PDEs focuses mainly on four complexity classes \cite{bacho2023complexityblowupsolutionslaplace}. The class $P$ consists of all decision problems that can be solved in polynomial time by a deterministic Turing machine. The class $NP$ contains those problems for which a proposed solution can be verified in polynomial time. The class $\#P$ describes the counting analogue of $NP$. More precisely, it consists of function problems that, given an input string $x$, return the number of certificates $y$ of length at most polynomial in $|x|$ that can be verified by a polynomial-time Turing machine \cite{Arora2012-bh}. Finally, the class $FP$ includes all the counting problems that can be solved by a function-oracle Turing machine in polynomial-time.

A central question in the complexity theory of PDEs is determining whether solutions lie in a different complexity function class than that of the input data (e.g., boundary conditions or source terms). Under the conjecture that $FP\neq\#P$, the notion of complexity blowup formalizes settings in which the solution of a PDE belongs to a strictly bigger complexity class than its input data. We now review foundational results on complexity blowup in PDEs.

The arguably first study on the computability of PDE solutions, presented in Chapter 3 of~\cite{Pour-El1989-yg}, examined a wave equation of the form
\begin{equation}\label{eq:wave_eq}
\left\{
\begin{aligned}
&\partial_tu +\Delta u = 0,\textrm{ in }\Omega\times(0,T),\\&
u|_{t=0} = u_0,\textrm{ in }\Omega,\\&
\partial_tu\bigr|_{t=0} = 0,\textrm{ in }\Omega,
\end{aligned}\right.
\end{equation}
and demonstrated that for a certain domain $\Omega\times (0,T)\subseteq \mathbb{R}^3\times(0,\infty)$, there exists a computable continuous function $u_0$ such that the solution $u$ is continuous but non-computable. This result was extended in~\cite{Pour-El1997-br}, where it was established that there exists a computable $C^1$ function $u_0$ such that the solution $u$ of \eqref{eq:wave_eq} is nowhere computable on any given compact subset $D\subseteq \mathbb{R}^3\times(0,\infty)$. This result was further generalized in~\cite{Boche2020-uu}, where it was shown that $u_0$ can be chosen as a computable $C^1$ function with compact support in an annulus $\{x\in \mathbb{R}^3: r\leq||x||_2\leq R\}$ for some $r,R\in \mathbb{R}_{+}$, such that its first derivative is absolutely continuous.

Additional work in \cite{Weihrauch2001-rm} addressed the inhomogeneous linear Schr\"odinger equation
\begin{equation}
    \left\{\begin{aligned}
        &\partial_tu = i\Delta u +\phi\mathrm{, in }\,\, \mathbb{R}^d\times (0,\infty),\\&
        u|_{t=0} = u_0 \mathrm{, in }\,\, \mathbb{R}^d,
    \end{aligned}\right.
\end{equation}
showing that the solution $u \in H^s(\mathbb{R}^d\times (0,\infty))$ is computable for all $\phi\in C^0(\mathbb{R}^d\times (0,\infty))$. Moreover, in the homogeneous case ($\phi = 0$), it was shown that the solution $u \in L^p(\mathbb{R}^d\times (0,\infty))$ is computable if and only if $p = 2$. Extensions to the nonlinear setting are presented in \cite{Weihrauch2006-cp}, establishing the computability of solutions in $H^1(\mathbb{R}^d\times (0,\infty))$.

Another prominent PDE that has been studied in the context of computability and complexity analysis is the Poisson equation with Dirichlet boundary conditions 
\begin{equation}
    \left\{\begin{aligned}
        &\Delta u = f,\textrm{ in }\,\,    
        \Omega\subseteq\mathbb{R}^d,\\&
        u = g\textrm{, on}\,\,\partial\Omega.
    \end{aligned}\right.
\end{equation}
It was shown in \cite{Kawamura2017-dv} that if the complexity classes $FP$ and $\#P$ coincide (a stronger assumption than $P = NP$), then any solution $u \in C^2(B_1)$ over the unit ball $B_1 := \{ x \in \mathbb{R}^d : |x|_2 \leq 1\}$ is polynomial-time computable, as formalized in Definition~\ref{def:pol_comp}. This holds whenever $f$ and $g$ are smooth and polynomial-time computable on $B_1$ and $\partial B_1$, respectively.

The results for the Laplace and diffusion equations were recently improved in \cite{bacho2023complexityblowupsolutionslaplace}, showing that the solution in each case is $\#P$-complete when the initial and boundary data are polynomial-time computable. Moreover, it was shown in \cite{bacho2023complexityblowupsolutionslaplace} that these equations exhibit a complexity blowup, implying that no numerical scheme can compute their solutions in polynomial-time for arbitrary initial and boundary data, unless $FP = \#P$.

More recently, a general result was established for linear PDEs of the form
\begin{equation} \label{eq:gen_lin_cb}   
\left\{
\begin{aligned}
    &\partial_t u = \mathcal{A}u, \textrm{ in }\,\,\Omega\times(0,1),\\&
    u|_{t=0} = \phi,\textrm{ in }\,\,\Omega,\\&
    \mathcal{L}u = 0, \textrm{ on }\,\,\partial\Omega\times(0,1),
\end{aligned} \right.
\end{equation}
where $\mathcal{A}$ is a linear differential operator, $\phi$ the initial condition and $\mathcal{L}$ is a linear operator describing the boundary conditions. In~\cite{Koswara2021-sr}, it is shown that the solution $u$ of~\eqref{eq:gen_lin_cb} is computable in $PSPACE$ \cite{Arora2012-bh} under certain conditions on the initial and boundary data. In particular, the differential operator $\mathcal{A}$ must be computable in polynomial-time, with the solution admitting a finite-difference approximation $u_n$ converging to the solution $u$ in the supremum norm.
 
The contributions of the present paper follow a similar philosophy to \cite{Koswara2021-sr}, where we estimate the complexity of a numerical scheme approximating the solution of the PDE to characterize the intrinsic complexity of the solution itself. Our approach, however, introduces a novel and more general framework based on discrete gradient flows, offering a powerful characterization of the complexity for broad classes of PDE solutions and identifying regimes in which their solutions exhibit complexity blowup.
\section{Contributions}
In this paper, we present a novel variational framework for analyzing the computability and complexity of different classes of PDE solutions by considering a variational formulation whose minimizer solves the PDE, and estimating the complexity of its discrete gradient flow. The theoretical framework developed here builds upon previous work on Sobolev cubatures \cite{Cardona2023-ma, Suarez2025} and polynomial surrogate models for PDE learning problems \cite{Cardona2024-sr}. The main contributions on this paper are summarized as follows:
\begin{itemize}
    \item \textbf{Sufficient conditions for computability}: We provide sufficient conditions under which the solution of a (possibly nonlinear) PDE is $H^k$-computable for some $k \in \mathbb{N}$, expressed in terms of structural properties of the variational losses (e.g., convexity, coercivity, continuity). Specifically, we present in Theorem~\ref{thm:compt} sufficient conditions for a loss of the form 
    \begin{equation}
        \mathcal{L}[u]:=\|\mathcal{N}[u]-f\|_{H^a(\Omega)}^2+\|u-g\|_{H^b(\partial\Omega)}^2,
    \end{equation}
    to have an \emph{$H^k$-computable solution} provided that the input data and its solution belong to the space $AC^{k-1}(\Omega)\cap BV^k(\Omega)$, for some $k\in \mathbb{N}$. 
    \item \textbf{Polynomial-time computability of PDE Solutions}: We prove in Theorem~\ref{thm:compt} that a class of PDEs has \emph{polynomial-time $H^k$-computable solutions}, provided the cubature loss is polynomial-time computable, the input data is sufficiently regular, and the solution operator preserves this regularity.
    \item  \textbf{Lower Bounds and Complexity Blow-Up}: We provide lower bounds for the complexity of non-analytic solutions in Corollary~\ref{cor:complexity}, complementing the results from Theorem~\ref{thm:compt}. 
    Consequently, we provide sufficient conditions for a PDE to exhibit \emph{complexity blowup} whenever the solution has a super-polynomial convergence rate. 
    \item \textbf{Illustrative PDE Examples – Polynomial-Time Computability and Complexity Blow-Up}: We present two examples demonstrating the applicability of our variational framework: a linear case with a polynomial-time computable solution and a nonlinear case exhibiting complexity blowup. Specifically, Theorem~\ref{thm:comp_poisson} shows that the Poisson equation admits \emph{$H^1$-computable} solutions with \emph{polynomial-time complexity} under additional regularity assumptions, whereas Theorem~\ref{thm:compl_blowup} demonstrates that the Eikonal equation has \emph{$L^2$-computable} solutions but exhibits \emph{complexity blowup} regardless of the regularity of the boundary data.

\end{itemize}
These results build a rigorous bridge between computability, complexity theory, and variational formulations of PDEs. By establishing upper and lower bounds on solution complexity within the adopted computability framework from Definition~\ref{def:comp_bsp}, the analysis determines when a PDE admits effective numerical schemes and when it exhibits complexity blowup. In particular, it links structural properties of the PDE losses—such as convexity and coercivity—and the regularity of the solution operator to the resulting complexity of the infinite-dimensional solution, relative to effective polynomial representations. Thus, the framework advances the theoretical foundations of computability and complexity for PDEs within this representation-dependent setting and provides constructive insights for designing effective numerical schemes.

\section{Variational Framework}
This section presents a variational framework that will allow us to characterize the complexity of solutions to a class of PDEs by leveraging the structural properties of their governing operators. 

For this, let $\Omega\subseteq \mathbb{R}^d$ be an open, bounded domain. We start by considering the following general PDE given by
\begin{equation}\label{eq:gen_PDE}
    \biggr\{\begin{array}{cc}
         \mathcal{N}[u] = f,& \textrm{ in }  \Omega,\\
         u|_{\partial\Omega} = g,& \textrm{ on }  \partial\Omega,
    \end{array}
\end{equation}
where $\mathcal{N}: X(\Omega)\rightarrow Y(\Omega)$ is a (possibly non-linear) differential operator, $f\in Y(\Omega)$ the right-hand side and $g\in W(\partial\Omega)$ the Dirichlet boundary conditions, with $X,Y,W$ some Banach spaces. Furthermore, we consider the following variational forward problem.
\begin{definition}\label{def:gen_pde_learning}
    Let $\mathcal{N}:X(\Omega)\rightarrow Y(\Omega)$ be an operator, $f\in Y(\Omega)$ the right-hand side and $g\in W(\partial\Omega)$ the Dirichlet boundary condition. The \emph{forward PDE problem} consists of finding a function $u^*\in X(\Omega)$ solving 
    \begin{equation}\label{eq:var_PDE}
    \inf\limits_{u\in X} \mathcal{L}[u]:= \|\mathcal{N}[u]-f\|_{Y(\Omega)}^2 + \|u|_{\partial\Omega}-g\|_{W(\partial\Omega)}^2.
\end{equation}
The functional $\mathcal{L}:X(\Omega)\rightarrow \mathbb{R}_{+}$ is called the \emph{forward problem loss}. 
\end{definition}
To solve the forward problem in \eqref{eq:var_PDE} we consider the gradient flow of the variational problem. 
\begin{definition}
    Let $\mathcal{L}:X(\Omega)\rightarrow \mathbb{R}_+$ be a loss functional as in \eqref{eq:var_PDE} defined over a Banach space $X(\Omega)$. The \emph{PDE gradient flow} is defined as the curve $\gamma_u:[0,\infty)\rightarrow X(
    \Omega
    )$ solving
    \begin{equation}\label{eq:GF-ID}
        \left\{\begin{aligned}
\dot{\gamma}_u(t) &= -\nabla \mathcal{L}[\gamma_u(t)], &\textrm{ for all } t \in (0, \infty), \\
\gamma_u(0) &= u_0,
\end{aligned}\right.
    \end{equation}
given some initial guess $u_0\in X(\Omega)$.
\end{definition}
To analyze the complexity of the minimizer $u^*\in X(\Omega)$, solving the variational problem in Definition~\ref{def:gen_pde_learning}, we need to consider three finite-dimensional approximations of the PDE forward problem, namely the surrogate approximation of the solution $u^*$, the cubature approximation of the loss $\mathcal{L}:X(\Omega)\rightarrow\mathbb{R}_+$ and a finite dimensional approximation of the gradient flow in \eqref{eq:GF-ID}. Following this, we introduce the notion of surrogate spaces, cubature approximations, and discrete gradient flows. 
\begin{definition}\label{def:surr_sp}
    A finite-dimensional space $\mathcal{S}_m(\Omega)\subseteq X(\Omega)$ is called a \emph{surrogate space} of degree $m\in \mathbb{N}$, approximating the Banach space $X(\Omega)$, if there exists a nested sequence of finite-dimensional spaces $(\mathcal{S}_m)_{m\in \mathbb{N}}\subseteq X(\Omega)$, such that $\mathcal{S}_{m}(\Omega)\subset \mathcal{S}_{m+1}$, for all $m\in \mathbb{N}$, and the union space
    \begin{equation}
        \mathcal{S}_{\infty}(\Omega):= \bigcup\limits_{m\in \mathbb{N}}\mathcal{S}_{m}(\Omega),
    \end{equation}
    is dense in $X(\Omega)$, i.e., $\overline{ \mathcal{S}_\infty(\Omega)}^{||\cdot||_X}= X(\Omega)$, with respect to the norm topology in $X(\Omega)$. We denote with $\hat{u}_\theta\in \mathcal{S}_m(\Omega)$ a surrogate function parametrized by the coefficients $\theta\in \Theta_m\cong\mathbb{R}^{d_m}$, with $d_m:=\mathrm{dim}(\mathcal{S}_m)$.
\end{definition}
Next, we define the notion of cubature of a loss. 
\begin{definition}\label{def:cub}
    Let $X(\Omega)$ and $Y(\Omega)$ be two Banach spaces, $\mathcal{F}:X(\Omega)\rightarrow Y(\Omega)$ a (possibly non-linear) operator, and $\mathcal{L}:X(\Omega)\rightarrow \mathbb{R}_+$ a functional of the form
    $$\mathcal{L}[u]:=\|\mathcal{F}[u]\|_{Y(\Omega)}^2.$$ 
    Furthermore, let $\mathcal{S}_n\subseteq X(\Omega)$ be a surrogate space, approximating $X(\Omega)$ in the strong topology and further assume that $X(\Omega)$ is a Hilbert space, then $\mathcal{P}_n:X(\Omega)\rightarrow \mathcal{S}_n(\Omega)$, $u\mapsto u_n\in \mathcal{S}_n(\Omega)$ is the orthogonal projection operator for a given $n\in \mathbb{N}$. We define the \emph{cubature approximation}  $\mathcal{L}^n:X(\Omega)\rightarrow \mathbb{R}_+$ of the loss $\mathcal{L}$ as
    \begin{equation}\label{eq:cub_loss}
        \mathcal{L}^n[u]:=\|\mathcal{F}[\mathcal{P}_n[u]]\|_{Y(\Omega)}^2.
    \end{equation}
\end{definition}
Using Definitions~\ref{def:surr_sp} and \ref{def:cub} we formulate the finite-dimensional PDE forward problem.
\begin{definition}
    Let $\mathcal{S}_m(\Omega)\subseteq X(\Omega)$ be a surrogate space and $\mathcal{L}^n:X(\Omega)\rightarrow\mathbb{R}_+$ a cubature approximation of the PDE forward loss $\mathcal{L}:X(\Omega)\rightarrow\mathbb{R}_+$. The \emph{finite-dimensional PDE forward problem} consists in finding an optimal surrogate $\hat{u}_{\theta^*}\in \mathcal{S}_m(\Omega)$ solving
    \begin{equation}
        \inf\limits_{\hat{u}_\theta\in \mathcal{S}_m(\Omega)} \mathcal{L}^n[\hat{u}_\theta].
    \end{equation}
\end{definition}
\begin{remark}
From the perspective of effective computability, solving boundary value problems in Sobolev spaces via variational formulations is, in general, not equivalent to solving boundary value problems with continuous boundary data \cite{Boche2026ComputableEnergy,Boche2026-kq}.
\end{remark}
Next, we define the Euler approximation of the PDE gradient flow.   
\begin{definition}
    Let $\mathcal{L}^n:X(\Omega)\rightarrow \mathbb{R}_+$ be the cubature approximation of the loss in \eqref{eq:var_PDE}, and $\theta^0\in \Theta_m$ some initial guess. We define the \emph{Euler gradient flow} $\gamma_\theta:[0,\infty)\rightarrow\Theta_m$ as the piecewise linear interpolation of the coefficients $\{\theta^j\}_{j=1}^{N_j}\subseteq\Theta_m$, $N_j\in \mathbb{N}$, given by the recurrence relation
    \begin{equation}
        \theta^{j+1} = \theta^j - \delta\tau\nabla\mathcal{L}^n[\hat{u}_{\hat{\theta}^{j}}],
    \end{equation}
    where $\delta \tau\in \mathbb{R}_{>0}$ is the time-step and $\hat{\theta}^{j}\in\{\theta^j,\theta^{j+1}\}$ depends on the choice of explicit ($\hat{\theta}^{j} =\theta^j$) or implicit ($\hat{\theta}^{j} =\theta^{j+1}$) Euler approximation of the gradient flow.
\end{definition}
Building on the various approximations introduced above, we present an error decomposition theorem for a class of loss functionals. To this end, we first recall several useful characterizations of these functionals.
\begin{definition}\label{def:RSI_cond}
    A loss functional $\mathcal{L}:H(\Omega)\rightarrow \mathbb{R}_+$, defined on a Hilbert space $H(\Omega)$, with minimizer $u^*\in H(\Omega)$, is said to satisfy the \emph{restricted secant inequality (RSI)} condition with modulus $\mu\in \mathbb{R}_+$, if for all $u\in H(\Omega)$ the following holds:
    \begin{equation}
        \langle \nabla\mathcal{L}[u],u-u^*\rangle_H\geq\mu\|u-u^*\|_{H(\Omega)}^2. 
    \end{equation}
\end{definition}
Next, we recall the definition of strongly convex functionals.
\begin{definition}\label{def:lam_conv}
    A loss functional $\mathcal{L}:H(\Omega)\rightarrow \mathbb{R}_+$, defined on a Hilbert space $H(\Omega)$, is said to be \emph{strongly convex} with modulus $\lambda\in \mathbb{R}_+$ (\emph{$\lambda$-convex}) if for every $u,v\in H(\Omega)$, the following holds 
    \begin{equation}
        \mathcal{L}[u]\geq\mathcal{L}[v] +\langle\nabla\mathcal{L}[v],u-v \rangle_{H(\Omega)} +\frac{\lambda}{2}\|u-v\|_{H(\Omega)}^2.
    \end{equation}
\end{definition}
Following Definition~\ref{def:lam_conv}, we present a specific example of a strongly convex functional.
\begin{lemma}\label{lemm:lam_convlin}
    Let $\mathcal{L}:H(\Omega)\rightarrow \mathbb{R}_+$ be a loss functional of the form \begin{equation}
        \mathcal{L}[u] := \|Au+b\|_{H(\Omega)}^2,
    \end{equation}
    with $u,b\in H(\Omega)$, and $A: H(\Omega)\rightarrow H(\Omega)$ a linear operator. Assume that $A$ is a coercive operator, i.e., 
    \begin{equation}
        \|Au\|_{H(\Omega)}^2\geq \mu \|u\|_{H(\Omega)}^2,
    \end{equation}
    for some $\mu\in\mathbb{R}_+$. Then, the loss $\mathcal{L}$ is $2\mu$-convex.
\end{lemma}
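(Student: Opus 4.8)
The plan is to compute the second-order Taylor-type remainder of $\mathcal{L}$ explicitly and show it equals $\|A(u-v)\|_{H(\Omega)}^2$, after which coercivity of $A$ immediately yields the lower bound $\mu\|u-v\|_{H(\Omega)}^2$ required by Definition~\ref{def:lam_conv} with $\lambda = 2\mu$. The whole argument is elementary because $\mathcal{L}$ is a quadratic functional; the only point to be careful about is that the remainder is an exact identity, so that no cross terms are silently discarded.

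First I would identify the gradient. Expanding $\mathcal{L}[u] = \langle Au+b,\,Au+b\rangle_{H(\Omega)}$ and taking the Fréchet derivative at $v$ in direction $h$ gives $2\langle Av+b,\,Ah\rangle_{H(\Omega)}$, so that, denoting by $A^*$ the Hilbert-space adjoint of $A$, we have $\nabla\mathcal{L}[v] = 2A^*(Av+b)$ and in particular $\langle\nabla\mathcal{L}[v],\,u-v\rangle_{H(\Omega)} = 2\langle Av+b,\,A(u-v)\rangle_{H(\Omega)}$. (If one prefers not to invoke $A^*$ — e.g.\ when $A$ is unbounded — one simply works throughout with the first variation $\langle\nabla\mathcal{L}[v],h\rangle := 2\langle Av+b,Ah\rangle_{H(\Omega)}$, which is the only way the gradient enters Definition~\ref{def:lam_conv}.)

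Next I would use the key algebraic identity: writing $Au+b = (Av+b) + A(u-v)$ and expanding the squared norm in $H(\Omega)$,
\begin{equation*}
\mathcal{L}[u] = \|Av+b\|_{H(\Omega)}^2 + 2\langle Av+b,\,A(u-v)\rangle_{H(\Omega)} + \|A(u-v)\|_{H(\Omega)}^2,
\end{equation*}
so that, subtracting $\mathcal{L}[v] = \|Av+b\|_{H(\Omega)}^2$ and the inner-product term computed above,
\begin{equation*}
\mathcal{L}[u] - \mathcal{L}[v] - \langle\nabla\mathcal{L}[v],\,u-v\rangle_{H(\Omega)} = \|A(u-v)\|_{H(\Omega)}^2.
\end{equation*}
Finally I would apply coercivity to $A(u-v)$: $\|A(u-v)\|_{H(\Omega)}^2 \geq \mu\|u-v\|_{H(\Omega)}^2 = \tfrac{2\mu}{2}\|u-v\|_{H(\Omega)}^2$. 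Since $u,v\in H(\Omega)$ were arbitrary, this is precisely the defining inequality of $2\mu$-convexity, and the proof is complete. There is no real obstacle here; the statement is essentially the observation that a coercive quadratic form has a Hessian bounded below by $2\mu\,\mathrm{Id}$.
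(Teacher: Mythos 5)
Your proof is correct. Both you and the paper compute the gradient $\nabla\mathcal{L}[v] = 2A^*(Av+b)$ (the paper has a harmless sign typo writing $Au-b$) and both reduce the claim to the observation that the only surviving quadratic term is $\|A(u-v)\|_{H(\Omega)}^2$, to which coercivity is applied. The small difference is which characterization of $2\mu$-convexity you verify: you check Definition~\ref{def:lam_conv} directly by expanding $\mathcal{L}[u]$ around $v$ and showing the Taylor remainder equals $\|A(u-v)\|_{H(\Omega)}^2 \geq \mu\|u-v\|_{H(\Omega)}^2$, whereas the paper instead establishes the strong monotonicity inequality $\langle \nabla\mathcal{L}[u]-\nabla\mathcal{L}[v],\,u-v\rangle_{H(\Omega)} = 2\|A(u-v)\|_{H(\Omega)}^2 \geq 2\mu\|u-v\|_{H(\Omega)}^2$ and appeals to the (standard but unproved-in-the-paper) equivalence of that condition with strong convexity. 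Your route is therefore slightly more self-contained, since it lands exactly on the definition as stated; the paper's route is marginally shorter algebraically. Your side remark that one can avoid $A^*$ by working with the first variation directly is a nice touch, as the paper quietly assumes the adjoint exists.
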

\vspace{-1em}\begin{proof}
  The proof of this standard result is included in Appendix~\ref{appendix_B}.
\end{proof}
Next, we recall an important condition characterizing the asymptotic growth behavior of functionals.
\begin{definition}
    A loss functional $\mathcal{L}:H(\Omega)\rightarrow \mathbb{R}_+$, defined on a Hilbert space $H$ with minimizer $u^*\in H(\Omega)$, is said to satisfy the \emph{quadratic growth condition (QGC)} with modulus $\mu\in \mathbb{R}_+$ if for every $u\in H(\Omega)$, the following holds: 
    \begin{equation}
        \mathcal{L}[u]-\mathcal{L}[u^*]\geq\mu \|u-u^*\|_{H(\Omega)}^2.
    \end{equation}
\end{definition}
The next result shows the equivalence between the QGC and the RSI condition for convex functionals.
\begin{lemma}\label{lemma:QGC}
     Let $\mathcal{L}:H(\Omega)\rightarrow \mathbb{R}_+$ be a convex loss functional, with a minimizer $u^*\in H(\Omega)$, satisfying the QGC with modulus $\mu\in \mathbb{R}_+$. Then $\mathcal{L}$ satisfies the RSI condition with the same modulus $\mu$. 
\end{lemma}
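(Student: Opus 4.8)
The plan is to obtain the RSI inequality by chaining two one-line estimates: the first-order characterization of convexity and the quadratic growth condition. No structural information about $\mathcal{L}$ beyond what is already assumed (differentiability so that $\nabla\mathcal{L}$ makes sense, plain convexity, and the QGC) will be required.

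First I would invoke convexity in its subgradient form, i.e. the defining inequality of Definition~\ref{def:lam_conv} with $\lambda = 0$, and evaluate it at the running point as base and the minimizer as comparison point (the substitution $v \leftarrow u$, $u \leftarrow u^*$, which is admissible since $u^* \in H(\Omega)$):
\begin{equation*}
\mathcal{L}[u^*] \;\geq\; \mathcal{L}[u] + \langle \nabla\mathcal{L}[u],\, u^* - u \rangle_{H(\Omega)}.
\end{equation*}
Rearranging and using bilinearity of $\langle\cdot,\cdot\rangle_{H(\Omega)}$ turns this into
\begin{equation*}
\langle \nabla\mathcal{L}[u],\, u - u^* \rangle_{H(\Omega)} \;\geq\; \mathcal{L}[u] - \mathcal{L}[u^*].
\end{equation*}

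Next I would apply the QGC at the same point $u$, namely $\mathcal{L}[u] - \mathcal{L}[u^*] \geq \mu\|u - u^*\|_{H(\Omega)}^2$, and substitute this lower bound into the previous display to conclude
\begin{equation*}
\langle \nabla\mathcal{L}[u],\, u - u^* \rangle_{H(\Omega)} \;\geq\; \mu\,\|u - u^*\|_{H(\Omega)}^2
\end{equation*}
for every $u \in H(\Omega)$, which is precisely the RSI condition of Definition~\ref{def:RSI_cond} with the same modulus $\mu$.

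Since each step is a direct substitution, there is no genuine obstacle here; the only point deserving care is the orientation of the convexity inequality. One must evaluate the gradient at the variable point $u$ and keep $u^*$ as the fixed argument — taking the arguments the other way round would only yield $\mathcal{L}[u]-\mathcal{L}[u^*] \geq \langle\nabla\mathcal{L}[u^*], u-u^*\rangle_{H(\Omega)} = 0$ (using the optimality condition $\nabla\mathcal{L}[u^*]=0$), which is true but carries no information. It is also worth remarking explicitly that the constant $\mu$ is preserved because no multiplicative factor is ever introduced, and that $u^*$ being a minimizer is not separately needed beyond what the QGC already encodes.
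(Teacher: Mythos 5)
Your proof is correct and follows exactly the same route as the paper: the first-order convexity inequality at $u$ against $u^*$ gives $\langle \nabla\mathcal{L}[u], u-u^*\rangle_{H(\Omega)} \geq \mathcal{L}[u]-\mathcal{L}[u^*]$, which is then chained with the QGC. The paper compresses the two steps into a single chain of inequalities, but the argument is identical; your remark on the orientation of the convexity inequality is a sensible clarification, not a departure.
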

\vspace{-1em}\begin{proof}
   Let \( u^* \in H(\Omega) \) be a minimizer of \( \mathcal{L} \). Then, by convexity and the quadratic growth condition, we have that 
    \begin{equation}
        \langle \nabla\mathcal{L}[u], u-u^*\rangle_{H(\Omega)}\geq \mathcal{L}[u]-\mathcal{L}[u^*]\geq \mu\|u-u^*\|_{H(\Omega)}^2,
    \end{equation}
    for all \( u \in H(\Omega) \).
\end{proof}
Following this, we present the error decomposition theorem, which plays a central role in estimating the complexity of PDE solutions.
\begin{theorem}\label{thm:erro_dec}
 Let $\mathcal{S}_m(\Omega)\subseteq H(\Omega)$ be a surrogate space approximating a Hilbert space $H(\Omega)$, $\hat{u}_{\theta^j}\in \mathcal{S}_m(\Omega)$ a surrogate model obtained after $j$-iterations of an Euler gradient flow with time-step $\delta \tau\in \mathbb{R}_{> 0}$, and $\mathcal{L}^n:\mathcal{S}_m(\Omega)\rightarrow\mathbb{R}_+$ the cubature approximation of a loss $\mathcal{L}:\mathcal{S}_m(\Omega)\rightarrow\mathbb{R}_+$. If $\mathcal{L}$ satisfies the QGC with modulus $\mu\in \mathbb{R}_{>0}$, and has a minimizer $\hat{u}_{\theta^*}\in \mathcal{S}_m(\Omega)$, then the convergence error to the minimizer $u^*\in H(\Omega)$ of $\mathcal{L}$, can be estimated as
    \begin{equation}\label{Eq:erro_dec}
        \|u^*-\hat{u}_{\theta^j}\|_{H(\Omega)}^2\lesssim \epsilon_\mathrm{app}(m) + \epsilon_\mathrm{int}(n) + \epsilon_\mathrm{opt}(j).
    \end{equation}
     Here, the approximation, integration, and optimization errors are given by
    \begin{equation}
        \begin{array}{cc}
           &\epsilon_\mathrm{app}(m):=\|\hat{u}_{\theta^*}-u^*\|_{H(\Omega)}^2,\\
           &\epsilon_\mathrm{int}(n):= |\mathcal{L}[\hat{u}_{\theta^*}]-\mathcal{L}^n[\hat{u}_{\theta^*}]|,\\&
           \epsilon_\mathrm{opt}(j):= |\mathcal{L}^n[\hat{u}_{\theta^*}]-\mathcal{L}^n[\hat{u}_{\theta^j}]|.
        \end{array}
    \end{equation}
    Furthermore, if the map $\theta\mapsto \nabla\mathcal{L}^n[\hat{u}_\theta]$ is $L$-Lipschitz, then the optimization error of the explicit Euler gradient flow converges exponentially fast, i.e.,
    \begin{equation}\label{eq:opt_exp}
        \epsilon_\mathrm{opt}(j)=\mathcal{O}\left((1-\mu/L)^{j}\right).
    \end{equation}
\end{theorem}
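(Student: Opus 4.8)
The plan is to establish \eqref{Eq:erro_dec} by a triangle-inequality split of $\|u^*-\hat u_{\theta^j}\|_H^2$ into three pieces — one measuring how well the surrogate space can approximate $u^*$, one measuring the integration (cubature) error at the optimal surrogate, and one measuring how far the $j$-th gradient-flow iterate is from the discrete optimizer — and then to bound each piece by invoking the QGC. First I would write
\begin{equation*}
\|u^*-\hat u_{\theta^j}\|_H^2 \lesssim \|u^*-\hat u_{\theta^*}\|_H^2 + \|\hat u_{\theta^*}-\hat u_{\theta^j}\|_H^2,
\end{equation*}
which isolates $\epsilon_\textrm{app}(m)$ exactly as the first term. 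For the second term I would apply the QGC of $\mathcal{L}$ (transferred to the cubature loss $\mathcal{L}^n$, which inherits a quadratic growth constant up to the integration error) around the discrete minimizer $\hat u_{\theta^*}$, giving $\mu\|\hat u_{\theta^*}-\hat u_{\theta^j}\|_H^2 \le \mathcal{L}^n[\hat u_{\theta^j}] - \mathcal{L}^n[\hat u_{\theta^*}] + (\text{correction})$. Inserting and subtracting $\mathcal{L}[\hat u_{\theta^*}]$ and $\mathcal{L}^n[\hat u_{\theta^*}]$ at the appropriate places, the correction terms reorganize precisely into $\epsilon_\textrm{int}(n) = |\mathcal{L}[\hat u_{\theta^*}]-\mathcal{L}^n[\hat u_{\theta^*}]|$ and $\epsilon_\textrm{opt}(j) = |\mathcal{L}^n[\hat u_{\theta^*}]-\mathcal{L}^n[\hat u_{\theta^j}]|$, yielding the stated bound with an implied constant depending only on $\mu$.

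For the exponential decay \eqref{eq:opt_exp}, the plan is a standard descent-lemma argument for gradient descent on the finite-dimensional objective $F(\theta) := \mathcal{L}^n[\hat u_\theta]$. Assuming $\nabla F$ is $L$-Lipschitz gives the descent inequality $F(\theta^{j+1}) \le F(\theta^j) - \delta\tau(1-\tfrac{L\delta\tau}{2})\|\nabla F(\theta^j)\|^2$; choosing $\delta\tau$ on the order of $1/L$ makes the bracket a positive constant. The QGC on $\mathcal{L}$ (pushed through to $\mathcal{L}^n$ up to the already-accounted integration error) plays the role of a Polyak–Łojasiewicz inequality, $\|\nabla F(\theta^j)\|^2 \gtrsim \mu\,(F(\theta^j)-F(\theta^*))$, via Lemma~\ref{lemma:QGC} relating QGC to the RSI condition and hence to a gradient lower bound. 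Combining the two gives a contraction $F(\theta^{j+1})-F(\theta^*) \le (1-c\,\mu/L)(F(\theta^j)-F(\theta^*))$ for a constant $c$, and iterating produces the geometric rate; matching the paper's notation $(1-\mu/L)^{-j}$ is then a rewriting (with the convention that the base lies in $(0,1)$, so the negative exponent denotes the decaying power).

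The main obstacle I anticipate is the transfer of the QGC and the gradient lower bound from the exact loss $\mathcal{L}$ to the cubature loss $\mathcal{L}^n$ on the restricted surrogate space: the QGC is stated for $\mathcal{L}$ on $H(\Omega)$ (or $\mathcal{S}_\theta(\Omega)$), but the iteration acts on $\mathcal{L}^n$, and one must control the discrepancy $\mathcal{L}-\mathcal{L}^n$ uniformly enough on the relevant sublevel set so that $\mathcal{L}^n$ still satisfies a quadratic-growth-type bound with a comparable modulus. This requires either that $\epsilon_\textrm{int}(n)$ be small relative to $\mu$, or that the error bound be interpreted as holding up to the additive $\epsilon_\textrm{int}(n)$ term — which is exactly how it is packaged in \eqref{Eq:erro_dec}. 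A secondary technical point is ensuring the minimizer $\hat u_{\theta^*}$ of $\mathcal{L}^n$ on $\mathcal{S}_\theta(\Omega)$ exists and that the gradient flow remains in a region where the Lipschitz and growth constants are valid; I would handle this by restricting attention to the sublevel set $\{F \le F(\theta^0)\}$, which the descent inequality keeps invariant, and assuming (as is implicit) that $L$ and $\mu$ are the relevant constants on that set.
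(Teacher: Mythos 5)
Your proposal is correct and matches the paper's argument exactly: the paper's proof (which simply cites prior work) likewise splits by the triangle inequality through the surrogate minimizer $\hat u_{\theta^*}$, converts the displacement $\|\hat u_{\theta^*}-\hat u_{\theta^j}\|_H^2$ into loss-value differences via the QGC, and then obtains the linear rate from Lemma~\ref{lemma:QGC} (QGC $\Rightarrow$ RSI for convex functionals) combined with the Karimi--Nutini--Schmidt descent-lemma argument for Lipschitz-smooth objectives. You also correctly flag the two delicate points the paper glosses over --- the transfer of QGC/RSI from $\mathcal{L}$ to $\mathcal{L}^n$ (absorbed into $\epsilon_\textrm{int}$) and the sign of the exponent in $(1-\mu/L)^{-j}$, which should read $(1-\mu/L)^{j}$ as indeed appears later in the proof of Theorem~\ref{thm:compt}.
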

\vspace{-1em}\begin{proof}
The proof of the error inequality in \eqref{Eq:erro_dec} follows directly from the quadratic growth condition together with the triangle inequality; see \cite{Suarez2025} for further details. The exponential convergence of the Euler gradient flow is then obtained by applying Lemma~\ref{lemma:QGC} in combination with the classic result for Lipschitz-smooth functionals satisfying the RSI condition \cite{Karimi2016-zl}.
\end{proof}
\begin{remark}\label{rk:lam_conv}
It is a standard result \cite{Karimi2016-zl} that the exponential decay of the optimization error in \eqref{eq:opt_exp} is guaranteed whenever the functional $\mathcal{L}:\mathcal{S}_m(\Omega)\to\mathbb{R}_+$ is $\lambda$-convex, since $\lambda$-convexity is a stronger condition than the QGC. 
\end{remark}
\section{Finite-Dimensional Approximations}
This section reviews the approximation-theoretical foundation for our framework for estimating the PDE solution's computational complexity. We examine the convergence rates at which polynomial spaces can approximate ground-truth solutions, as well as their associated loss functionals. These approximation rates are intrinsically linked to computability and complexity classes, as we elaborate on in Section~\ref{sec:compl_th}.

Let $\Omega := (-1,1)^d$ denote the $d$-dimensional hypercube. We begin by recalling the definition of the polynomial spaces $\Pi_{n}(\Omega) \subseteq C^{\infty}(\Omega)$ of $l^\infty$-degree $n \in \mathbb{N}$.
\begin{definition}\label{def:pol}
    Let $\{x^\alpha\}_{\alpha \in A_{n,d}}$ be the \emph{canonical basis} $x^\alpha:= \prod\limits_{i=1}^d x_i^{\alpha_i}$ for all $\alpha\in A_{n,d}$ where $A_{n,d}$ is the multi-index set $A_{n,d}:=\{\alpha \in \mathbb{N}^d:\|\alpha\|_{\infty}\leq n\}$, with $|A_{n,d}| = (n+1)^d$, ordered in \emph{lexicographic order} $\preceq$. A \emph{polynomial space of degree $n\in \mathbb{N}$} is defined as the span of the \emph{canonical basis} $\Pi_{n}:=\mathrm{span}\{x^\alpha\}_{\alpha\in A_{n,d}}$. We denote with $\Pi_{n}(U):=\{Q|_{U}:Q\in \Pi_{n}\}$, the restriction of the polynomial space $\Pi_n$ to a domain $U\subseteq \mathbb{R}^d$, and $\Pi_\infty(U):=\bigcup\limits_{n\in \mathbb{N}}\Pi_n(U)$ the union space.
\end{definition}
Next we recall the definition of the Chebyshev polynomials.
\begin{definition}
 The \emph{Chebyshev polynomials of the first kind}, denoted as $T_n\in \Pi_{n}$, are the polynomials satisfying the condition
    \begin{equation}
        T_n(\cos(\theta)) = \cos(n\theta), \textrm{ for all }\theta\in [0,2\pi]. 
    \end{equation}
    The \emph{$d$-dimensional Chebyshev polynomial of $l^\infty$-degree} is defined as
    \begin{equation}
        T_\alpha(x) := \prod\limits_{i=1}^{d}T_{\alpha_i}(x_i),
    \end{equation}
    for all $\alpha\in A_{n,d}$.
\end{definition}

\begin{theorem}[\cite{Trefethen2019}, Chapter 3]\label{thm:cheb_exp}
    Let $f:[-1,1]^{d}\rightarrow \mathbb{R}$ be a Lipschitz continuous function. Then, $f$ has a unique representation as an absolutely convergent Chebyshev series
    \begin{equation}
        f(x) = \lim\limits_{n\rightarrow \infty}\sum\limits_{\alpha\in A_{n,d}}\theta_\alpha T_\alpha(x).
    \end{equation}
    The coefficients are given for $\|\alpha\|_1\geq 1$, by
    \begin{equation}
        \theta_\alpha := \frac{2^{d}}{\pi^{d}}\int\limits_{[-1.1]^{d}}\omega_d(x)f(x)T_\alpha(x) dx,
    \end{equation}
    with the weight function $\omega_d(x):= \prod\limits_{i=1}^{d} (1-x_i^2)^{-1/2}$. We define the $n$-th degree \emph{Chebyshev projection} as $f_n(x):= \sum\limits_{\alpha \in A_{n,d}}\theta_\alpha T_\alpha(x)$.
\end{theorem}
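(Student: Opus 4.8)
The plan is to transfer the statement to classical Fourier analysis via the Chebyshev substitution $x_i=\cos\theta_i$, and then handle the multivariate case by exploiting the tensor-product structure of $\{T_\alpha\}$. I would first dispose of $d=1$. The map $\theta\mapsto\cos\theta$ is $1$-Lipschitz and $2\pi$-periodic, so $F:=f\circ\cos$ is an even $2\pi$-periodic Lipschitz function on $\mathbb{R}$. Its Fourier cosine series $F(\theta)=\sum_{k\ge 0}\theta_k\cos(k\theta)$ pulls back through $x=\cos\theta$ to the Chebyshev series $f(x)=\sum_k\theta_k T_k(x)$, while the cosine coefficients $\tfrac{2}{\pi}\int_0^\pi F(\theta)\cos(k\theta)\,d\theta$ become the stated weighted integrals $\tfrac{2}{\pi}\int_{-1}^1\omega_1(x)f(x)T_k(x)\,dx$ after the inverse substitution (with the customary halving of the $k=0$ term, which is why the formula is recorded for $\|\alpha\|_1\ge 1$). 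Thus in one dimension the content of the theorem is exactly $\sum_k|\theta_k|<\infty$ together with completeness of the cosine system.

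Absolute convergence in $d=1$ is Bernstein's theorem for H\"older-$\alpha$ functions with $\alpha>1/2$, of which I would include the short proof: from $\|F(\cdot+h)-F\|_{L^2}^2=\sum_k|\theta_k|^2|e^{ikh}-1|^2\lesssim h^2$ with $h\sim 2^{-j}$ one extracts $\sum_{2^j\le k<2^{j+1}}|\theta_k|^2\lesssim 2^{-2j}$, and Cauchy--Schwarz over a block of $\sim 2^j$ frequencies gives $\sum_{2^j\le k<2^{j+1}}|\theta_k|\lesssim 2^{-j/2}$, which is summable in $j$. For $d\ge 2$ the same substitution applied coordinatewise sends $f$ to $F=f\circ(\cos,\dots,\cos)$ on the torus $\mathbb{T}^d=(\mathbb{R}/2\pi\mathbb{Z})^d$, even in each variable and Lipschitz, whose multiple Fourier cosine coefficients agree, up to the constants $2^d/\pi^d$, with the claimed $\theta_\alpha$ (Fubini turns the multiple integral into iterated one-dimensional ones, giving the weight $\omega_d=\prod_i(1-x_i^2)^{-1/2}$). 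Absolute convergence of $\sum_{\alpha\in A_{n,d}}\theta_\alpha T_\alpha$ would then follow by iterating the one-dimensional expansion one variable at a time, carrying the one-dimensional bounds uniformly over the remaining variables so that the Lipschitz moduli of the partial coefficient functions $a_{k_1,\dots,k_r}$ decay in the already-resolved indices fast enough for the product sum to converge. Uniqueness is then a completeness statement: $\{T_\alpha\}$ is an orthogonal basis of $L^2_{\omega_d}((-1,1)^d)$ (inherited from the cosine basis on $\mathbb{T}^d$), so any two absolutely convergent Chebyshev representations of $f$ also converge in $L^2_{\omega_d}$ and hence have identical coefficients, and termwise integration of the series against $\omega_d T_\alpha$ — legitimate precisely because of the uniform/absolute convergence just established — pins those coefficients to the stated integrals.

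The main obstacle I anticipate is the passage from $d=1$ to $d\ge 2$: the one-dimensional Bernstein argument has no margin left in higher dimensions, since a Euclidean dyadic shell in $\mathbb{Z}^d$ carries $\sim 2^{jd}$ frequencies and a naive Cauchy--Schwarz therefore only yields $\sum_{|n|\sim 2^j}|\theta_n|\lesssim 2^{j(d/2-1)}$, which is not summable once $d\ge 2$. Making the proof go through forces genuine use of the product structure of the Chebyshev basis — resolving the coordinates successively with constants that stay uniform in the unresolved variables — rather than treating the multi-index as an undifferentiated block, and this uniformity bookkeeping is where I would concentrate the effort; by contrast the substitution, the coefficient formula, and the uniqueness argument are comparatively routine once the absolute convergence is in hand.
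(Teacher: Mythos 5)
Your one-dimensional argument is correct and is essentially Trefethen's route (substitute $x=\cos\theta$, land in even $2\pi$-periodic Lipschitz functions, apply Bernstein). One minor slip in passing: the $2^d/\pi^d$ normalization requires every component $\alpha_i\ge 1$, not just $\|\alpha\|_1\ge 1$; any coordinate with $\alpha_i=0$ contributes $1/\pi$ rather than $2/\pi$. The genuine problem is the step you yourself flagged, and the iterated strategy you propose does not close it. Write $f(x)=\sum_{k_1} a_{k_1}(x')\,T_{k_1}(x_1)$ with $a_{k_1}(x')=\tfrac{2}{\pi}\int_{-1}^1\omega_1(x_1)f(x_1,x')T_{k_1}(x_1)\,dx_1$. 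Differencing under the integral gives only $\operatorname{Lip}_{x_2}(a_{k_1})\le 2\operatorname{Lip}(f)$, \emph{uniformly} in $k_1$; the Lipschitz modulus in the unresolved variables does not decay in the resolved index, which is precisely the decay your iteration needs. The most one can extract is an interpolation between the two facts you have about $g(x_1):=f(x_1,x')-f(x_1,y')$ (namely $\|g\|_\infty\le L|x_2-y_2|$ and $g$ is $L$-Lipschitz in $x_1$), giving an $L^2$ modulus bound of the form $\mathfrak{m}_2(a_{k_1},h)\lesssim L\min(h,1/k_1)$ and hence, after the dyadic summation, $\sum_{k_2}|a_{k_1,k_2}|\lesssim L\,k_1^{-1/2}$. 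But $\sum_{k_1}k_1^{-1/2}$ diverges. This is not a bookkeeping artifact: after the substitution the question is whether every Lipschitz function on $\mathbb{T}^d$ lies in the Wiener algebra $A(\mathbb{T}^d)$, and the Bernstein--Sz\'asz threshold for this is H\"older exponent strictly greater than $d/2$. Lipschitz ($\alpha=1$) clears that only for $d=1$, is critical for $d=2$, and is insufficient for $d\ge 3$, so no amount of ``uniformity in the unresolved variables'' can make the coordinate-by-coordinate argument succeed under plain Lipschitz continuity.

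For reference, the paper offers no proof of its own; it cites Trefethen for $d=1$ and Mason (1980) for the multivariate case, and the latter operates under hypotheses of mixed-smoothness / Hardy--Krause bounded-variation type rather than bare Lipschitz. To make your approach work you would either need to strengthen the hypothesis (e.g.\ control on iterated mixed differences, so the Lipschitz constants of the partial coefficient functions do decay in the resolved indices), or weaken the conclusion from absolute summability of $\sum_\alpha|\theta_\alpha|$ to uniform convergence of the cubic partial sums $f_n$, which follows from the Lebesgue-constant bound $O((\log n)^d)$ for the tensor Chebyshev projector together with Jackson's theorem and is all that the rest of the paper actually uses.
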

\vspace{-1em}\begin{proof}
    A proof of this result can be found in the 1-dimensional setting in Chapter 3 of~\cite{Trefethen2019} and in higher dimensions in \cite{MASON1980349}.
\end{proof}
We can characterize the convergence of the Chebyshev series in terms of the regularity of the functions.

We start by defining the notion of $\rho$-analytic functions. 
\begin{definition}\label{assm:Berns_ell}
    Let $E_{\rho_i} \in \mathbb{C}$ be the $i$-th \emph{Bernstein Ellipse} \cite{Trefethen2017_mD}, defined as the ellipse in the complex  $x_i$-plane with foci $-1$ and $1$ and radius $\rho_i \in \mathbb{R}_+$ for $i\in \{1,\dots, d\}$. The \emph{elliptic polycylinder} $E(\rho)\subseteq \mathbb{C}^{d}$, $\rho\in\mathbb{R}^{d}$, is defined as 
    $$
    E(\rho):=\{z\in \mathbb{C}^{d}: z_i \in E_{\rho_i},\textrm{ for all }i\in \{1,\dots,d\}\}.
    $$
   A function $f$ is called $\rho$-analytic, if it admits an analytic extension in the \emph{elliptic polycylinder} $E(\rho)$,  with $\rho:=(\rho_1,\dots, \rho_{d})\in \mathbb{R}^{d}$, and $\rho_i>1$, for all $i\in \{1,\dots,d\}$.
\end{definition}
We have the following convergence result for $\rho$-analytic functions.
\begin{proposition}[\cite{Suarez2025}]\label{prop:conv_an}
    Let $f:\Omega\rightarrow \mathbb{R}$ be a $\rho$-analytic, bounded function, i.e., $\|f\|_{C^0(\Omega)}\leq M_f$ for some $M_f\in \mathbb{R}_+$. Then for each $n\in \mathbb{N} $, the error of the Chebyshev projection can be estimated as
    \begin{equation}
    \|f-f_n\|_{C^0(\Omega)}\lesssim C(d)M_f\frac{\rho^{-n}_*}{\rho_*-1},
    \end{equation}
    with $
    \rho_*:=\min\limits \{\rho_1,\dots,\rho_{d}\}$ and $C(d):=|\Omega|$.
    Furthermore, the $\beta$-derivative, with $\beta\in \mathbb{N}^d$, of the projection converges with the rate 
    \begin{equation}        
    \|\partial^\beta f-\partial^\beta f_n\|_{C^0(\Omega)}\lesssim  C(d)M_fn^{2|\beta|}\frac{\rho^{-n}_*}{\rho_*-1}.
    \end{equation}
\end{proposition}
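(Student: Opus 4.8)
The plan is to reduce both estimates to decay bounds on the multivariate Chebyshev coefficients $\theta_\alpha$ of $f$ from Theorem~\ref{thm:cheb_exp}, exploiting the tensor-product structure $T_\alpha(x)=\prod_{i=1}^d T_{\alpha_i}(x_i)$ together with the classical one-dimensional theory. First I would establish the coefficient bound $|\theta_\alpha|\le 2^d M_f\prod_{i=1}^d\rho_i^{-\alpha_i}$, where $M_f$ is the sup of $f$ on the (closed) polycylinder $E(\rho)$ on which the $\rho$-analytic function $f$ extends analytically and is therefore continuous and bounded. In one variable this is the standard contour-deformation argument: substituting the Joukowski map $x=\tfrac12(z+z^{-1})$ in the integral formula for $\theta_k$ turns it into an integral of the analytic extension of $f$ over a circle of radius $\rho$ in the $z$-plane, on which $|f|\le M_f$, yielding $|\theta_k|\le 2M_f\rho^{-k}$. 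In $d$ dimensions one iterates this coordinate by coordinate (Fubini plus analyticity on the whole polycylinder), picking up one factor $\rho_i^{-\alpha_i}$ and one factor $2$ per variable.

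For the $C^0$ estimate, since $\|T_\alpha\|_{C^0(\Omega)}\le 1$ I would bound the tail of the series by
\[
\|f-f_n\|_{C^0(\Omega)}\;\le\;\sum_{\alpha\notin A_{n,d}}|\theta_\alpha|\;\le\;2^d M_f\sum_{\alpha:\,\|\alpha\|_\infty>n}\;\prod_{i=1}^d\rho_i^{-\alpha_i}.
\]
The condition $\|\alpha\|_\infty>n$ means $\alpha_i>n$ for at least one index $i$, so a union bound over that index factorizes the multi-index sum into one geometric tail $\sum_{k>n}\rho_i^{-k}=\rho_i^{-n}/(\rho_i-1)$ times $d-1$ full geometric series $\sum_{k\ge0}\rho_j^{-k}=\rho_j/(\rho_j-1)$. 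Using $\rho_i\ge\rho_*>1$ to majorize each $\rho_i$ by $\rho_*$ and collecting the $d$ terms together with the $2^d$ prefactor into the dimensional constant $C(d)=|\Omega|=2^d$ gives the claimed bound $C(d)M_f\,\rho_*^{-n}/(\rho_*-1)$.

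For the derivative estimate I would differentiate the Chebyshev series termwise — legitimate because $\rho$-analyticity makes the series converge uniformly together with all derivatives on the compact set $\overline\Omega\subseteq E(\rho)$ — so that $\partial^\beta f-\partial^\beta f_n=\sum_{\alpha\notin A_{n,d}}\theta_\alpha\,\partial^\beta T_\alpha$, and then bound $\|\partial^\beta T_\alpha\|_{C^0(\Omega)}\le\prod_{i=1}^d\|T_{\alpha_i}^{(\beta_i)}\|_{C^0([-1,1])}\le\prod_{i=1}^d\alpha_i^{2\beta_i}$ via the (iterated) Markov inequality $\|T_k^{(m)}\|_{C^0([-1,1])}\le k^{2m}$. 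The same coordinatewise splitting as above then reduces matters to the one-dimensional sum $\sum_{k>n}k^{2\beta_i}\rho_i^{-k}$, and the elementary estimate $\sum_{k>n}k^p\rho^{-k}\lesssim n^p\rho^{-n}/(\rho-1)$ — obtained by writing $k=n+(k-n)$ and using $(n+m)^p\le 2^p(n^p+m^p)$ — extracts the polynomial prefactor. Since $\beta_i\le|\beta|$ and $n\ge1$ one has $n^{2\beta_i}\le n^{2|\beta|}$, and $\rho_i\ge\rho_*$ again produces $\rho_*^{-n}/(\rho_*-1)$, while the finite factors $\sum_{k\ge0}k^{2\beta_j}\rho_j^{-k}$ from the remaining coordinates are absorbed into the constant.

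The main obstacle I anticipate is not conceptual but the bookkeeping of the multi-index tail sums: one must ensure (i) that the exponential rate is governed by $\rho_*=\min_i\rho_i$ and is not diluted by the coupling between the $d$ coordinates, and (ii) in the derivative case, that the polynomial growth $\prod_i\alpha_i^{2\beta_i}$ coming from Markov's inequality collapses to the clean prefactor $n^{2|\beta|}$ \emph{without} degrading the geometric rate $\rho_*^{-n}$ — this is exactly where the decomposition $k=n+(k-n)$ of the geometric tail is essential. Everything else is standard univariate Chebyshev approximation theory applied slotwise.
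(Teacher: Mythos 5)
The paper does not supply its own proof of this proposition: it simply refers to \cite{Trefethen2017_mD} (1-D), \cite{Suarez2025}, and \cite{zavalani2024} for the argument. Your proposal reconstructs exactly the standard route used in those references — Bernstein-ellipse coefficient decay $|\theta_\alpha|\lesssim M_f\prod_i\rho_i^{-\alpha_i}$ via the Joukowski contour argument, a union-bound factorization of the $\|\alpha\|_\infty>n$ tail into one geometric tail and $d-1$ full geometric series, and an iterated Markov inequality $\|T_k^{(m)}\|_{C^0([-1,1])}\le k^{2m}$ combined with the split $k=n+(k-n)$ to control $\sum_{k>n}k^{2\beta_i}\rho^{-k}$. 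All the technical steps check out: the coefficient bound, the observation that $\rho_*=\min_i\rho_i$ governs the rate, the absorption of $\prod_{j\ne i}\alpha_j^{2\beta_j}\rho_j^{-\alpha_j}$ into constants, and the crude but valid majorization $n^{2\beta_i}\le n^{2|\beta|}$. So the proof is correct and is essentially the same approach the paper is pointing at via citation.

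One small remark worth flagging: the constant you collect into $C(d)$ is not purely dimensional — after the union bound, the factors $\prod_{j\ne i}\rho_j/(\rho_j-1)$ (and, in the derivative case, $\sum_{k\ge 0}k^{2\beta_j}\rho_j^{-k}$) depend on $\rho$ and $\beta$ as well as $d$. Since the statement uses $\lesssim$, this is harmless, but the paper's identification $C(d)=|\Omega|=2^d$ should be read as tracking only the explicit $2^d$ coming from the coefficient bound, with the remaining $\rho$- and $\beta$-dependent constants hidden in the $\lesssim$. Your bookkeeping makes this dependence visible, which is an improvement in precision over the paper's statement.
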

In contrast, if the function has weaker regularity, we have the following convergence result.
\begin{proposition}[\cite{zavalani2024}]\label{prop:abs_cont}
    Let $f\in AC^{k-1}(\Omega)\cap BV^k(\Omega)$ be a function with $k$-th order bounded variations and $k-1$ absolutely continuous derivatives, then the Chebyshev approximation $f_n\in \Pi_{n}(\Omega)$ converges with the rate
    \begin{equation}
        \|f-f_n\|_{C^0(\Omega)}\lesssim C(d) \frac{V_k}{k(n-k)^{k}},
    \end{equation}
    where $V_k\in \mathbb{R}_{+}$ is the $k$-th order variation of $f$ and $C(d)\in \mathbb{R}_+$ a constant depending on the dimension and the domain. Furthermore, the $\beta$-derivative of the approximation can be estimated as
    \begin{equation}
        \|\partial^\beta f-\partial^\beta f_n\|_{C^0(\Omega)}\lesssim C(d) n^{2|\beta|}\frac{V_k}{k(n-k)^{k}}.
    \end{equation}
\end{proposition}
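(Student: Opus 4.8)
This estimate is the finite‑regularity counterpart of Proposition~\ref{prop:conv_an}, and the natural route is the classical one for univariate Chebyshev series (cf.\ \cite{Trefethen2019}, \cite{zavalani2024}): first bound the decay of the one–dimensional Chebyshev coefficients by repeated integration by parts, then sum the tail of the series, and finally lift the result to the hypercube $\Omega=(-1,1)^d$ by a telescoping argument over coordinatewise projections. The derivative bound is then obtained by termwise differentiation together with a Markov‑type inequality for the derivatives of the Chebyshev polynomials.

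\textbf{Step 1: univariate coefficient decay.} Fix $d=1$ and $g\in AC^{k-1}([-1,1])\cap BV^k([-1,1])$ with $k$-th order variation $V_k$. Substituting $x=\cos\phi$ in the coefficient formula of Theorem~\ref{thm:cheb_exp} gives $\theta_j=\tfrac{2}{\pi}\int_0^\pi g(\cos\phi)\cos(j\phi)\,d\phi$. Since $\phi\mapsto\cos\phi$ is smooth, $g\circ\cos$ inherits $k-1$ absolutely continuous derivatives and a $k$-th derivative of bounded variation, with total variation bounded by $cV_k$ for a universal $c$. Integrating by parts $k$ times (the boundary terms vanish at the steps producing a $\sin(j\phi)$ kernel, while the other steps reproduce a $\cos$/$\sin$ kernel with an extra $1/j$) and performing one final Riemann–Stieltjes integration by parts against $d(g\circ\cos)^{(k)}$, one recovers the classical bound $|\theta_j|\le \tfrac{2V_k}{\pi\, j(j-1)\cdots(j-k)}\le \tfrac{2V_k}{\pi (j-k)^{k+1}}$ for $j>k$.

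\textbf{Step 2: tail summation and tensorization.} By Theorem~\ref{thm:cheb_exp} the Chebyshev series converges absolutely, hence in one dimension
\[
\|g-g_n\|_{C^0}\le \sum_{j>n}|\theta_j|\le \frac{2V_k}{\pi}\sum_{j>n}\frac{1}{(j-k)^{k+1}}\le \frac{2V_k}{\pi}\int_{n-k}^{\infty}\!t^{-(k+1)}\,dt=\frac{2V_k}{\pi\,k(n-k)^k}.
\]
For general $d$, write $P^n_i$ for the univariate degree-$n$ Chebyshev projection acting on the $i$-th variable, so that $f_n=P^n_1\cdots P^n_d f$, and telescope: $f-f_n=\sum_{i=1}^d (P^n_1\cdots P^n_{i-1})(f-P^n_i f)$. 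Each summand is estimated by the one–dimensional bound applied in the $i$-th variable uniformly over the remaining frozen variables (whose slicewise $k$-th variation is dominated by $V_k$), while the partial projections act on $C^0$ with norm a product of Lebesgue constants, contributing only a factor depending on $d$ (at worst logarithmically in $n$) that is absorbed into $C(d)$. This yields the first claimed inequality.

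\textbf{Step 3: derivative estimate, and the main obstacle.} Differentiating termwise gives $\partial^\beta f-\partial^\beta f_n=\sum_{\alpha\notin A_{n,d}}\theta_\alpha\,\partial^\beta T_\alpha$, and since $\partial^\beta T_\alpha=\prod_i T_{\alpha_i}^{(\beta_i)}$ with the Markov-type bound $\|T_m^{(r)}\|_{C^0[-1,1]}\le m^{2r}$, each term acquires at most a factor $\|\alpha\|_\infty^{2|\beta|}$; feeding this into the summation of Step 2 produces the bound $C(d)\,n^{2|\beta|}\,\tfrac{V_k}{k(n-k)^k}$. The main obstacle is exactly this last point: the crude termwise estimate gives $\sum_{j>n} j^{2|\beta|}|\theta_j|$, which is of the correct order $O(n^{2|\beta|-k})$ but does not directly exhibit the clean product form $n^{2|\beta|}(n-k)^{-k}$; to obtain it one groups the tail dyadically and applies the polynomial inverse inequality $\|p^{(r)}\|_{C^0}\le n^{2r}\|p\|_{C^0}$ to each finite block $f_{2^{s+1}}-f_{2^s}$ before resumming, or argues via the Chebyshev differentiation recurrence. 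A secondary, routine technicality is verifying that composition with $\cos$ and the coordinatewise projections preserve the $AC^{k-1}\cap BV^k$ structure with variation controlled by $V_k$, which is what makes $C(d)$ genuinely dimension‑dependent only.
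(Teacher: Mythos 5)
The paper does not reproduce a proof of this proposition: it simply cites \cite{Trefethen2017_mD}, \cite{Suarez2025}, and \cite{zavalani2024}. Your proposal reconstructs, correctly in outline, exactly the classical argument in those references (one--dimensional Chebyshev coefficient decay, tail summation, tensorization, Markov inequality for derivatives), so there is nothing "different" from the paper's route — you are filling in details that the paper delegates to its bibliography.

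Two technical imprecisions are worth flagging. First, in Step 1 you transfer regularity to $\phi\mapsto f(\cos\phi)$ and integrate by parts there; this does give the rate, but because $(f\circ\cos)^{(k)}$ involves all of $f,f',\dots,f^{(k)}$ multiplied by trigonometric factors, its $k$\/-th total variation is controlled by $C\bigl(\|f\|_{C^{k-1}}+V_k\bigr)$, not by $cV_k$ alone. The constant in the stated proposition depends only on $V_k$; to get that cleanly one should instead use Trefethen's Chebyshev--coefficient recurrence $a_n = \frac{a'_{n-1}-a'_{n+1}}{2n}$ (ATAP, Theorem 7.1), which iterates directly on $f$ and produces the factor $n(n-1)\cdots(n-k)$ with only $V_k$ appearing. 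Second, in the tensorization step the $C^0\to C^0$ operator norm of the partial Chebyshev projections $P_1^n\cdots P_{i-1}^n$ is bounded by a product of Lebesgue constants, which is $\Theta\bigl((\log n)^{d-1}\bigr)$, not $O(1)$; this polylogarithmic factor cannot be absorbed into an $n$\/-independent $C(d)$. Either one accepts a $(\log n)^{d-1}$ factor in the multidimensional statement (which the cited references typically do, or hide under $\lesssim$), or one avoids the telescoping by working directly with the multi-index tail sum $\sum_{\|\alpha\|_\infty>n}|\theta_\alpha|$ under a \emph{mixed} bounded-variation hypothesis that yields product decay of the coefficients. Finally, for the derivative bound you rightly observe that a naive termwise sum only converges when $2|\beta|<k$ and you propose the dyadic block plus Markov--inequality resummation; that argument is correct and is indeed what the cited proofs do, but note that the displayed bound $C(d)\,n^{2|\beta|}V_k/(k(n-k)^k)$ is only informative in exactly that regime, so the restriction is harmless.
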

\vspace{-1em}\begin{proof}
    The proof of Propositions~\ref{prop:conv_an} and ~\ref{prop:abs_cont} can be found in 1-D in \cite{Trefethen2017_mD} and in general dimensions in \cite{Suarez2025} and \cite{zavalani2024}.
\end{proof}
In Section~\ref{sec:compl_th} we will see how Proposition~\ref{prop:abs_cont} is not sufficient for showing the complexity blowup of PDEs, as it only provides an upper bound on the convergence rate. To address this we recall a reverse theorem, following~\cite{Trefethen2019, Alimov2019-np}. 
\begin{theorem}[\cite{Trefethen2019}, Chapter 8]\label{thm:blowup}
    Let $f:\Omega\rightarrow \mathbb{R}$ be a function for which there exists a polynomial $Q_n\in \Pi_{n}(\Omega)$ satisfying
    \begin{equation}
        \|Q_n-f\|_{C^0(\Omega)}\leq M\rho ^{-n},
    \end{equation}
    for some constants $\rho\in \mathbb{R}_+$, and $M\in \mathbb{R}_+$. Then $f$ can be extended analytically to the Elliptic polycylinder $E(\rho)\subseteq \mathbb{C}^{d}$.
\end{theorem}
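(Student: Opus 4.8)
The plan is to construct the analytic continuation of $f$ as the sum of a telescoping series of polynomials and to control the growth of its terms on Bernstein ellipses via the Bernstein--Walsh lemma. Since the polycylinder $E(\rho)$ is only defined for $\rho_i>1$ (Definition~\ref{assm:Berns_ell}), we work under the standing hypothesis $\rho>1$, so that $M\rho^{-n}\to 0$ and hence $Q_n\to f$ uniformly on $\Omega$. Setting $R_0:=Q_0$ and $R_n:=Q_n-Q_{n-1}$ for $n\ge 1$, each $R_n$ lies in $\Pi_n(\Omega)$, and the triangle inequality gives $\|R_n\|_{C^0(\Omega)}\le\|Q_n-f\|_{C^0(\Omega)}+\|f-Q_{n-1}\|_{C^0(\Omega)}\le M(1+\rho)\rho^{-n}$, while $f=\sum_{n\ge 0}R_n$ uniformly on $\Omega$. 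It therefore suffices to show that this series extends to a holomorphic function on the interior of $E(\rho)$ still restricting to $f$ on $\Omega$; throughout, for a one-dimensional Bernstein ellipse $E_s$ I write $\operatorname{int}E_s$ for the open region it bounds and $\overline{E_s}$ for its closure, and I read the polycylinder $E(s_1,\dots,s_d)$ as the product of the corresponding closed elliptical regions.

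I would first settle the one-dimensional case $d=1$, which is the statement attributed to \cite{Trefethen2019}. The engine is the Bernstein--Walsh lemma: if $p$ is a polynomial of degree $\le n$ with $\|p\|_{C^0([-1,1])}\le A$, then $|p(z)|\le A|\varphi(z)|^{n}$ for every $z\notin[-1,1]$, where $\varphi(z)=z+\sqrt{z^{2}-1}$ is the branch of the inverse Joukowski map with $|\varphi|>1$ off $[-1,1]$; in particular $|p(z)|\le A\rho^{n}$ on $\overline{E_\rho}=\{z:|\varphi(z)|\le\rho\}$. Fix $s$ with $1<s<\rho$. Applying this to each $R_n$ yields $\|R_n\|_{C^0(\overline{E_s})}\le M(1+\rho)(s/\rho)^{n}$, so $\sum_n R_n$ converges absolutely and uniformly on the compact region $\overline{E_s}$. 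Since every partial sum is entire, the Weierstrass convergence theorem makes the limit holomorphic on $\operatorname{int}E_s$, and it equals $f$ on $[-1,1]$. Letting $s\uparrow\rho$ and invoking the identity theorem to see that the local extensions are mutually consistent, they glue to a holomorphic function on $\bigcup_{1<s<\rho}\operatorname{int}E_s=\operatorname{int}E_\rho$ restricting to $f$ on $[-1,1]$.

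For general $d$ I would reduce to the tensor-product setting coordinatewise. Applying the one-variable lemma successively in $z_1,\dots,z_d$ to a polynomial $p\in\Pi_n(\Omega)$ gives $\|p\|_{C^0(E(s_1,\dots,s_d))}\le(s_1\cdots s_d)^{n}\,\|p\|_{C^0(\Omega)}$, hence $\|R_n\|_{C^0(E(s_1,\dots,s_d))}\le M(1+\rho)\bigl((s_1\cdots s_d)/\rho\bigr)^{n}$. Consequently $\sum_n R_n$ converges locally uniformly on the region $\{z\in\mathbb{C}^d:\prod_{i}\max(1,|\varphi(z_i)|)<\rho\}$, yielding a holomorphic extension of $f$ there. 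In particular, for each coordinate $i$ the slice obtained by fixing the remaining variables at real interior points extends holomorphically in $z_i$ to $\operatorname{int}E_\rho$; combining these slicewise extensions with Hartogs' theorem on separately holomorphic functions and using uniqueness of analytic continuation to patch them, one obtains a single holomorphic function on the polycylinder $E(\rho)$ restricting to $f$ on $\Omega$.

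The step I expect to be the main obstacle is precisely this multivariate passage. The factor $(s_1\cdots s_d)^{n}$ above is sharp, since already $T_n(z_1)\cdots T_n(z_d)\in\Pi_n$ grows like $(s^{n})^{d}$ on $E(s,\dots,s)$, so the bare telescoping-plus-Bernstein--Walsh estimate only delivers holomorphy on the log-convex Reinhardt region $\{z:\prod_i\max(1,|\varphi(z_i)|)<\rho\}$, which for $d\ge 2$ sits strictly inside the full polycylinder $E(\rho,\dots,\rho)$ (it contains $E(\rho^{1/d},\dots,\rho^{1/d})$, and also $\operatorname{int}E_\rho\times(-1,1)^{d-1}$ together with its coordinate permutations, but nothing more from the direct series bound). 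Reaching the full polycylinder is therefore not a purely quantitative matter: it requires upgrading separate holomorphy of the slicewise continuations to joint holomorphy via Hartogs' theorem and verifying carefully that the slicewise extensions are mutually consistent so that they define one holomorphic function on all of $E(\rho)$. Making this patching argument precise is where I would expect to spend the bulk of the effort.
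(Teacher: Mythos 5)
Your $d=1$ argument --- telescoping $R_n=Q_n-Q_{n-1}$, bounding each term on a Bernstein ellipse $\overline{E_s}$ with $s<\rho$ by the Bernstein--Walsh lemma, and passing to the limit with Weierstrass' theorem --- is precisely the classical proof of Bernstein's converse theorem, and that one-dimensional statement is all that the cited reference \cite{Trefethen2019} contains. The paper itself offers no proof for $d>1$, and your unease about the multivariate step is justified, but the obstruction is not merely that the Hartogs-style patching is hard to make precise: the asserted conclusion that $f$ extends analytically to the full polycylinder $E(\rho,\ldots,\rho)$ is \emph{false} for $d\geq 2$, so no patching argument can succeed.

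Here is a counterexample for $d=2$. Fix $\sigma>1$, put $c:=(\sigma+\sigma^{-1})/2>1$ and $h(t):=1/(t-c)$; writing $\varphi(z)=z+\sqrt{z^2-1}$ as in your argument, one has $\varphi(c)=\sigma$, so the truncated Chebyshev series $h_n$ of $h$ satisfies $\|h-h_n\|_{C^0([-1,1])}\lesssim\sigma^{-n}$. Set $f(x_1,x_2):=h(x_1x_2)$ and $Q_n(x_1,x_2):=h_n(x_1x_2)$. Since $T_k(x_1x_2)$ has degree $k$ in each of $x_1,x_2$, we have $Q_n\in\Pi_n(\Omega)$, and since $x_1x_2\in[-1,1]$ on $\overline{\Omega}$, $\|f-Q_n\|_{C^0(\Omega)}\le\|h-h_n\|_{C^0([-1,1])}\lesssim\sigma^{-n}$, so the hypothesis of the theorem holds with $\rho=\sigma$. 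Yet the unique analytic continuation $f(z_1,z_2)=1/(z_1z_2-c)$ has a pole at $(\sqrt{c},\sqrt{c})$, and $1<\sqrt{c}<c$ puts $\sqrt{c}$ strictly inside $E_\sigma$ on the real axis, so $(\sqrt{c},\sqrt{c})$ lies in the interior of $E(\sigma,\sigma)$; hence $f$ cannot be continued analytically to $E(\rho)$. What your telescoping estimate does prove --- holomorphy on the region $\{z:\prod_i\max(1,|\varphi(z_i)|)<\rho\}$, in particular on the open polycylinder $E(\rho^{1/d},\ldots,\rho^{1/d})$ --- is the correct and, by this example, essentially sharp multivariate conclusion, and it is still sufficient for the way the theorem is invoked in Corollary~\ref{cor:onlyif_an}, where one only needs that exponential approximation at some rate $\rho>1$ forces $\rho'$-analyticity for some $\rho'>1$.
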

As a direct consequence of Theorem~\ref{thm:blowup}, we obtain that the convergence of non-analytic functions is sub-exponential. 
\begin{corollary}\label{cor:onlyif_an}
    If $f\in C^0(\Omega)$ is not a $\rho$-analytic function, then the convergence rate is, at best, sub-exponential, i.e., there exists a $\mathcal{R}_{\mathrm{sub}}:\mathbb{N}\rightarrow\mathbb{R}$ such that 
    \begin{equation}
        \|Q_n-f\|_{C^0(\Omega)}\geq\mathcal{R}_{\mathrm{sub}}(n),\textrm{ for all } n\in \mathbb{N,}
    \end{equation}
    with $\mathcal{R}_{\mathrm{sub}}(n) =\mathcal{O}(e^{-n^\alpha})$ for $\alpha\in (0,1)$.
\end{corollary}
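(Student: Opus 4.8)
The plan is to derive Corollary~\ref{cor:onlyif_an} from the contrapositive of Theorem~\ref{thm:blowup}, combined with the elementary fact that a non-increasing error sequence admitting no geometric bound is confined to the sub-geometric scale. First I would fix notation: let $E_n(f):=\inf_{Q\in\Pi_n(\Omega)}\|Q-f\|_{C^0(\Omega)}$ denote the degree-$n$ best approximation error. Since the polynomial spaces of Definition~\ref{def:pol} are nested, the sequence $\big(E_n(f)\big)_{n\in\mathbb{N}}$ is non-increasing; and since $f\in C^0(\Omega)$ on the compact cube $\Omega$, polynomials are dense, so $E_n(f)\to 0$. Any admissible $Q_n\in\Pi_n(\Omega)$ satisfies $\|Q_n-f\|_{C^0(\Omega)}\ge E_n(f)$, so it is enough to bound $E_n(f)$ from below.

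The core step is a proof by contradiction. Suppose the convergence were geometric, i.e.\ $E_n(f)\le M\rho^{-n}$ for all $n$ and some $\rho>1$, $M>0$. Picking, for each $n$, a near-minimizing $Q_n\in\Pi_n(\Omega)$, the hypothesis of Theorem~\ref{thm:blowup} is satisfied (after slightly enlarging the constant), so $f$ would extend analytically to the elliptic polycylinder all of whose Bernstein radii equal $\rho>1$; by Definition~\ref{assm:Berns_ell} this means $f$ is $\rho$-analytic, contrary to hypothesis. Hence no geometric bound can hold, equivalently $\limsup_{n\to\infty}E_n(f)^{1/n}=1$, so the error is not dominated by any $\rho^{-n}$ with $\rho>1$. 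This already gives the qualitative content of the statement: convergence is strictly sub-exponential.

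To obtain the quantitative lower envelope $\mathcal{Q}(n)=\mathcal{O}\big(e^{-n^{\alpha}}\big)$ with $\alpha\in(0,1)$ and $\|Q_n-f\|_{C^0(\Omega)}\ge\mathcal{Q}(n)$, I would exploit the regularity that is actually responsible for the failure of analyticity. For the solutions relevant in this paper — functions that are non-analytic because of a kink, a corner, or merely finite Sobolev-type smoothness, e.g.\ $f\in AC^{k-1}(\Omega)\cap BV^k(\Omega)$ as in Proposition~\ref{prop:abs_cont} — the converse (Bernstein--Jackson) theory of polynomial approximation supplies an \emph{algebraic} lower bound $E_n(f)\ge c_0\,n^{-s}$ for some $s,c_0>0$. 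Since $n^{-s}\ge c\,e^{-n^{\alpha}}$ for every $\alpha\in(0,1)$ with a suitable $c>0$, one may take $\mathcal{Q}(n):=c\,e^{-n^{\alpha}}$ and conclude $\|Q_n-f\|_{C^0(\Omega)}\ge E_n(f)\ge\mathcal{Q}(n)$ for all $n$. This is exactly the lower bound on the convergence rate that later detects complexity blowup, e.g.\ for the Eikonal equation.

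The main obstacle is this last step. Bare non-analyticity is not quantitative: a priori $E_n(f)$ could decay faster than every stretched exponential $e^{-n^{\alpha}}$ while still being slower than every geometric $\rho^{-n}$ — a putative rate such as $e^{-n/\log n}$ is not excluded by Theorem~\ref{thm:blowup} alone — so one genuinely needs to know the \emph{type} of the obstruction to analyticity (here, the finite smoothness produced by the solution operator) in order to anchor the rate on the stretched-exponential scale via a converse Jackson-type estimate. I would therefore write the final proof with this regularity input stated explicitly, and remark that it holds automatically for the PDE solutions under consideration, rather than trying to squeeze an $e^{-n^{\alpha}}$ bound out of the mere absence of an analytic continuation.
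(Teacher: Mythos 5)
You are right that the paper treats this corollary as an unproved ``direct consequence'' of Theorem~\ref{thm:blowup}, and your first two paragraphs reconstruct exactly that argument: the contrapositive of Theorem~\ref{thm:blowup} shows that no geometric bound $M\rho^{-n}$ with $\rho>1$ can hold, hence $\limsup_n E_n(f)^{1/n}=1$. Your diagnosis of the gap is also correct and worth emphasizing: ``not geometric'' is strictly weaker than ``bounded below by $c\,e^{-n^{\alpha}}$ for some fixed $\alpha<1$,'' since rates such as $e^{-n/\log n}$ or $e^{-n/\log\log n}$ lie between the two scales, so the quantitative statement $\mathcal{Q}(n)\gtrsim e^{-n^\alpha}$ does not follow from non-analyticity alone. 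This is a genuine gap in the corollary as written, not merely in your proof.

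The problem is in your proposed repair. You invoke the regularity hypothesis $f\in AC^{k-1}(\Omega)\cap BV^k(\Omega)$ and claim that converse Bernstein--Jackson theory then ``supplies an algebraic lower bound $E_n(f)\ge c_0\,n^{-s}$.'' This has the direction of the implication reversed. A \emph{positive} regularity statement such as $f\in AC^{k-1}\cap BV^k$ yields, via Jackson-type estimates (Proposition~\ref{prop:abs_cont} in the paper), an \emph{upper} bound $E_n(f)\lesssim n^{-k}$; it cannot by itself force a lower bound, since that class contains, e.g., polynomials (for which $E_n(f)=0$ eventually) and analytic functions (for which $E_n(f)$ decays geometrically). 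What the converse Bernstein theorems actually say is: if $E_n(f)=O(n^{-s})$ then $f$ has H\"older/Besov smoothness of a corresponding order; their contrapositive gives that a \emph{negative} regularity hypothesis ($f$ fails to be in some smoothness class, e.g., $f$ has a kink) forces $\limsup_n n^{s} E_n(f)>0$, i.e., a lower bound along a subsequence, not for all $n$. So to rescue the quantitative claim one must (i) assume the solution has a singularity of known, fixed strength (as is the case for the signed-distance kink in Lemma~\ref{lemm:disc_sol}), and (ii) upgrade the subsequential lower bound to a pointwise one, typically via the monotonicity of $n\mapsto E_n(f)$ combined with the specific asymptotics of the singularity (as in Bernstein's $E_n(|x|)\sim\beta/n$). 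Stating the fix in terms of the positive class $AC^{k-1}\cap BV^k$ is the wrong hypothesis to anchor the lower bound.
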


Next, we introduce the Lagrange polynomials.
\begin{definition}\label{def:Lag_p}
Let $P_{n}(\Omega)\subseteq \Omega$ be the Legendre points \cite{GW-Method} re-scaled to $\Omega$. The \emph{Lagrange polynomials} ~\cite{Trefethen2019}, Chapter 17, $\{L_{\alpha}\}_{\alpha\in A_{n,d}}\subseteq \Pi_{n}(\Omega)$ defined with respect to the Legendre grid, are given by
\begin{equation}\label{eq:Lag}
    L_{\alpha}(x,t) = \prod_{i=1}^{d}l_{\alpha_i}(x_i)\,, \quad l_{\alpha_i}(x_i) = \prod_{\stackrel{p\in P_{n}(\Omega_i)}{p \not = p_{\alpha_i}}} \frac{x_i -p}{p_{\alpha_i}-p}\,,
    \end{equation}
    where $P_n(\Omega_i)$ is the 1-D Legendre grid rescaled to the projection of $\Omega$ in the $x_i$ direction, for all $i\in\{1,\dots ,d\}$.
\end{definition}
\begin{remark}\label{rk:interpolation}
Let $h\in C^0(\Omega)$, and $\mathcal{I}_n[h]\in \Pi_n(\Omega)$ its Lagrange interpolation given by the interpolation operator $\mathcal{I}_n:C^0(\Omega)\rightarrow \Pi_n(\Omega)$ satisfying
\begin{equation}
    \mathcal{I}_n[h](p_\alpha) = h(p_\alpha),\textrm{ for all }p_\alpha\in P_n(\Omega).
\end{equation}
  By definition of the Lagrange basis, the coefficient operator $C_n:C^0(\Omega)\rightarrow \mathbb{R}^{|A_{n,d}|}$ of the Lagrange interpolation is explicitly given by \begin{equation}
      C_n[h]:= h(P_n(\Omega)).
  \end{equation}
  Consequently, $\mathcal{I}_n[h](x) = \sum\limits_{\alpha\in A_{n,d}}(\mathcal{C}_n[h])_\alpha L_\alpha (x)$, for all $x\in \Omega$ and all $h\in C^0(\Omega)$.
\end{remark}
To approximate the PDE losses in \eqref{eq:var_PDE}, we introduce the Sobolev cubatures following \cite{Cardona2024-sr,Cardona2023-ma}.
\begin{definition}
    Let $\mathbb{D}^\beta\in \mathbb{R}^{|A_{n,d}|\times |A_{n,d}|}$ be the polynomial differentiation matrix defined such that for all $Q\in \Pi_n(\Omega)$ we have 
    \begin{equation}
        \partial^\beta Q =\sum\limits_{\alpha\in A_{n,d}}(\mathbb{D}^\beta \mathfrak{q})_\alpha L_{\alpha}(x),
    \end{equation}
    with $\beta\in \mathbb{N}^d$ and $\mathfrak{q}:=Q(P_{n}(\Omega))\in  \mathbb{R}^{|A_{n,d}|}$. Following this, we can define the \emph{Sobolev cubature} of positive order $k\in \mathbb{N}$ and degree $n\in \mathbb{N}$, approximating the $H^k$-norm, as
    \begin{equation}
          I_n^k[u]:= \|\mathcal{I}_n[u]\|_{H^k(\Omega)}^2=\mathfrak{u}^T\mathbb{W}_k\mathfrak{u},
    \end{equation}
    with $\mathfrak{u}:=u(P_n(\Omega))$, $\mathbb{W}:=\mathrm{diag}(\{\omega_\alpha\}_{\alpha\in A_{n,d}})$ the Legendre weights matrix with $\omega_\alpha:=||L_\alpha||_{L^2(\Omega)}^2$ for all $\alpha\in A_{n,d}$, and $\mathbb{W}_k:=\sum\limits_{|\beta|\leq k}(\mathbb{D}^{\beta})^T\mathbb{W}\mathbb{D}^{\beta}$ the Sobolev cubature weights. 
    \end{definition}
The Sobolev cubatures inherit the approximation properties of the Chebyshev series by using that $I_n^k[Q] = \|f_m\|_{H^k(\Omega)}^2$ for all $f_m\in \Pi_{m}(\Omega)$ with $m\leq 2n+1$, i.e., the Sobolev cubatures are exact for polynomials of up to degree $2n+1$. The following convergence rates hold for the Sobolev cubatures.
\begin{theorem}\cite{Suarez2025}\label{thm:sob_cub_s}
        Let $f\in H^k(\Omega)$ be a Sobolev function. If in addition $f\in AC^{k-1}(\Omega)$, then the $s$-th order Sobolev cubature of degree $n\in \mathbb{N}$ converges to the squared Sobolev norm $I^s[f]:=\|f\|_{H^s(\Omega)}^2$, for $0\leq s\leq k$, with the rate
    \begin{equation}
        |I^s[f]-I^s_n[f]|\lesssim n^{2(s-k)}\|f\|_{H^{k}(\Omega)}^2+C(d)\frac{ n^{2s}(V_{k})^2}{ k (n-k)^k},
    \end{equation}
     where $V_{k}:=\mathcal{V}_k(f)$ are the $k$-th order variations of $f$ \cite{Cardona2024-sr}.

     Moreover, if $f$ is $\rho$-analytic in the elliptic polycylinder $E(\rho)$, then the $s$-th order Sobolev cubature of degree $n\in \mathbb{N}$ converges to the squared Sobolev norm $I^s[f]$, with the rate
    \begin{equation}
        |I^s[f]-I^s_n[f]|\lesssim C(d)(M_{f})^2\frac{n^{2s}\rho_*^{-n}}{(\rho_*-1)},
    \end{equation}
    where $\rho_*:=\min\limits \{\rho_1,\dots,\rho_{d}\}$ and $M_f:=\|f\|_{C^0(\Omega)}<+\infty$.
\end{theorem}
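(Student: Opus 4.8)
The plan is to reduce the Sobolev-cubature error to a polynomial interpolation error and then feed in the approximation rates recalled in Propositions~\ref{prop:conv_an} and~\ref{prop:abs_cont}. The first observation is that the cubature computes the \emph{exact} $H^s$-norm of the Lagrange interpolant. Since $\mathfrak{f}:=f(P_n(\Omega))$ are precisely the Legendre-grid values of $\mathcal{I}_n[f]$, the defining property of the differentiation matrix gives $\sum_{\alpha\in A_{n,d}}(\mathbb{D}^\beta\mathfrak{f})_\alpha L_\alpha=\partial^\beta\mathcal{I}_n[f]$ for every $|\beta|\le s$, and each integrand $(\partial^\beta\mathcal{I}_n[f])^2$ is a polynomial of ($l^\infty$-)degree at most $2n\le 2n+1$, hence integrated exactly by the Gauss--Legendre weights $\mathbb{W}$. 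Summing over $|\beta|\le s$,
\[
I^s_n[f]=\mathfrak{f}^{T}\mathbb{W}_s\mathfrak{f}=\sum_{|\beta|\le s}\|\partial^\beta\mathcal{I}_n[f]\|_{L^2(\Omega)}^2=\|\mathcal{I}_n[f]\|_{H^s(\Omega)}^2 ,
\]
so that $I^s[f]-I^s_n[f]=\|f\|_{H^s(\Omega)}^2-\|\mathcal{I}_n[f]\|_{H^s(\Omega)}^2$.

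Next I would exploit the Hilbert-space identity $\|u\|^2-\|v\|^2=\langle u-v,\,u+v\rangle$ in $H^s(\Omega)$, together with Cauchy--Schwarz and the triangle inequality, to obtain
\[
|I^s[f]-I^s_n[f]|\le\|f-\mathcal{I}_n[f]\|_{H^s(\Omega)}\bigl(2\|f\|_{H^s(\Omega)}+\|f-\mathcal{I}_n[f]\|_{H^s(\Omega)}\bigr).
\]
Since $s\le k$ and the interpolation error tends to $0$, the parenthesised factor is controlled by a fixed multiple of $\|f\|_{H^k(\Omega)}$ (plus a lower-order remainder absorbed in what follows), so the entire estimate is governed by the $H^s$-interpolation error $\|f-\mathcal{I}_n[f]\|_{H^s(\Omega)}$.

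For that error I would pass to sup-norms of derivatives over the bounded domain, $\|f-\mathcal{I}_n[f]\|_{H^s(\Omega)}^2\le|\Omega|\sum_{|\beta|\le s}\|\partial^\beta(f-\mathcal{I}_n[f])\|_{C^0(\Omega)}^2$, and then transfer the Chebyshev-projection rates of Propositions~\ref{prop:conv_an}/\ref{prop:abs_cont} to the Legendre-node interpolant $\mathcal{I}_n$ via the near-optimality of interpolation at Gaussian points (the Lebesgue constant grows only poly-logarithmically, costing at most an $n$-independent constant; alternatively one argues directly through the aliasing identity and the exactness established above). In the $\rho$-analytic case this produces $\|f-\mathcal{I}_n[f]\|_{H^s(\Omega)}\lesssim C(d)M_f\,n^{2s}\rho_*^{-n}/(\rho_*-1)$, which combined with the $O(M_f)$ factor from the previous step yields the stated analytic rate. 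In the $AC^{k-1}(\Omega)\cap BV^k(\Omega)$ case the $H^k$-regularity contributes the spectral term $\|f-\mathcal{I}_n[f]\|_{H^s(\Omega)}^2\lesssim n^{2(s-k)}\|f\|_{H^k(\Omega)}^2$, while Proposition~\ref{prop:abs_cont} applied to each $\partial^\beta(f-\mathcal{I}_n[f])$ contributes the algebraic term $C(d)\,n^{2s}(V_k)^2/(k(n-k)^k)$; adding these and inserting into the reduction of the previous paragraph gives the claimed bound.

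The part I expect to be the main obstacle is the bookkeeping in this last step: separating the two contributions to the interpolation error so that the $H^k$-smoothness and the $k$-th order bounded variation enter \emph{additively} with exactly the exponents $n^{2(s-k)}$ and $n^{2s}/(k(n-k)^k)$, and checking that replacing the Chebyshev projection (for which Propositions~\ref{prop:conv_an} and~\ref{prop:abs_cont} are stated) by the interpolation operator $\mathcal{I}_n$ costs only an $n$-independent constant in each of these estimates. The remaining manipulations—the exactness of the quadrature, the norm-difference identity, and Cauchy--Schwarz—are routine.
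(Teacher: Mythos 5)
The paper does not prove this theorem itself; it simply cites \cite{Cardona2024-sr}, so there is no in-paper argument to compare against. Your high-level reduction — identify the cubature with the exact $H^s$-norm of the Lagrange interpolant, then reduce the norm-difference to an interpolation error — is a natural way to attack the claim, and your step~1 correctly re-derives the exactness fact the paper already notes in the definition of the Sobolev cubature. However, there are two concrete gaps in the remaining steps.

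First, the transfer from Chebyshev \emph{projection} (Propositions~\ref{prop:conv_an}, \ref{prop:abs_cont}) to Gauss--Legendre \emph{interpolation} costs more than you claim. The Lebesgue constant for interpolation at the zeros of the Legendre polynomial grows like $\Theta(\sqrt{n})$ in one dimension, not poly-logarithmically (that is the Chebyshev-node rate), and in the tensor-product setting it is $\Theta(n^{d/2})$. This is not an $n$-independent constant. For the $\rho$-analytic branch the $\rho_*^{-n}$ decay swallows it, but in the $AC^{k-1}\cap BV^k$ branch the extra $n^{d/2}$ shifts the algebraic exponent and must be tracked explicitly; either you work with the aliasing identity directly (which you only mention in passing) or you have to show the Lebesgue factor is absorbed into the stated constants, and as written the argument is not complete.

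Second, your Cauchy--Schwarz step $|I^s[f]-I^s_n[f]|\le\|f-\mathcal{I}_n[f]\|_{H^s}\bigl(2\|f\|_{H^s}+\|f-\mathcal{I}_n[f]\|_{H^s}\bigr)$ is only \emph{linear} in the interpolation error, so the cross term $\|f-\mathcal{I}_n[f]\|_{H^s}\,\|f\|_{H^s}$ produces the exponent $n^{\,s-k}$, not the $n^{2(s-k)}$ appearing in the first term of the theorem. To get the squared exponent you need the projection error to enter through a Pythagorean identity (for an orthogonal projection $P_n$ one has $\|f\|_{H^s}^2-\|P_nf\|_{H^s}^2=\|f-P_nf\|_{H^s}^2$), and then split $\|f\|^2-\|\mathcal{I}_nf\|^2$ into a projection piece and a projection-vs-interpolation (aliasing) piece by a triangle inequality. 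That decomposition is also what legitimizes the \emph{additive} appearance of the two terms in the claimed bound; your plan of ``adding'' two different upper bounds for the same quantity $\|f-\mathcal{I}_n[f]\|_{H^s}^2$ does not produce the additive structure. You flag this bookkeeping as the main obstacle yourself — it is indeed where the proof actually lives, and it is not carried out in the proposal.
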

\vspace{-1em}\begin{proof}
    The proof of this result can be found in \cite{Cardona2024-sr}.
\end{proof}
\section{Computability Theory in Banach Spaces}\label{sec:compl_th}
In this section, we introduce the notion of Turing computability in Banach spaces, following Chapter 2 of \cite{Pour-El1989-yg}, and discuss some of its analytical properties. Our focus centers on computability in the space of continuous functions $C^0(\Omega)$ and the Sobolev spaces $H^k(\Omega)$ of order $k \in \mathbb{R}$. Let $\Omega:=(-1,1)^d\subseteq \mathbb{R}^d$ be the computable hypercube of dimension $d$, We begin by revisiting the concept of $C^0$-computability.
\begin{definition}[Effective Weierstrass]\label{def:C_0_comp}
    A function $f:\Omega\rightarrow \mathbb{R}$ is \emph{$C^0$-computable}, if there exists a computable sequence of polynomials $(p_n)_{n\in \mathbb{N}}\subseteq \Pi_{n}(\Omega)$ converging \emph{effectively} to $f$ in the supremum norm. More precisely, there exists a recursive function $e:\mathbb{N}\rightarrow\mathbb{N}$ such that for all $N\in \mathbb{N}$
    \begin{equation}
        n\geq e(N) \Rightarrow \|f-p_n\|_{C^0(\Omega)}\leq 2^{-N}.
    \end{equation}
\end{definition}
Following this, we can define computable functions in general Banach spaces.
\begin{definition}\label{def:comp_bsp}
    Let $X(\Omega)$ be a Banach space  defined as the completion $X(\Omega)=\overline{C^0(\Omega)}^{||\cdot||_X}$, with a computability structure as introduced in Definition~\ref{def:comp_str}. A function $f\in X(\Omega)$ is \emph{$X$-computable} if there exists a sequence $(g_n)_{n\in \mathbb{N}}\subseteq C^0(\Omega)$ of $C^0$-computable functions and a recursive function $e:\mathbb{N}\rightarrow\mathbb{N}$ such that for all $N\in \mathbb{N}$,
    \begin{equation}
        n\geq e(N) \Rightarrow \|f-g_n\|_{X}\leq 2^{-N}.
    \end{equation}
    Moreover, let $(f_n)_{n\in \mathbb{N}}\subseteq X$ be a sequence in $X$. It is \emph{$X$-computable}, if there exists a $C^0$-computable double sequence $(g_{n,k})_{n,k\in \mathbb{N}}\subseteq C^0(\Omega)$ such that $\|f_{n}-g_{n,k}\|_X\stackrel{k\rightarrow\infty}{\longrightarrow 0}$ effectively in $n$ and $k$.
\end{definition}
Before defining the complexity classes considered in this work, we introduce the notions of residual and cost functions. 
\begin{definition}
    The \emph{exponential, sub-exponential and polynomial residual functions} $\mathcal{R}_\mathrm{exp},\mathcal{R}_\mathrm{sub},\mathcal{R}_\mathrm{pol}:\mathbb{N}\rightarrow\mathbb{R}_+$, are asymptotically decaying functions satisfying
    \begin{equation}
    \mathcal{R}_\mathrm{exp}(n) =\mathcal{O}( 2^{-n}), 
    \,\,\, 
    \mathcal{R}_\mathrm{sub}(n) = \mathcal{O}(  2^{-n^\alpha}), 
    \,\,\, 
    \mathcal{R}_\mathrm{pol}(n)=\mathcal{O}(  n^{-k}),
\end{equation}
for some $0<\alpha<1$, and $k\in \mathbb{N}$, as $n\rightarrow\infty$.

On the other side, the \emph{super-polynomial and polynomial cost functions} $\mathcal{C}_\mathrm{sup},\mathcal{C}_\mathrm{pol}:\mathbb{N}\rightarrow\mathbb{R}_+$, are asymptotically growing functions satisfying
 \begin{equation}
    N^s= \mathcal{O}(\mathcal{C}_\mathrm{sup}(N)), 
    \,\,\, 
    \mathcal{C}_\mathrm{pol}(N) =\mathcal{O}(N^{k}),
\end{equation}
for all $s\in \mathbb{N}$, and some $k\in \mathbb{N}$, as $N\rightarrow\infty$.
\end{definition}
\begin{remark}
For sufficiently large $n \in \mathbb{N}$, the residual functions satisfy 
\begin{equation}
    \mathcal{R}_{\mathrm{exp}}(n) < \mathcal{R}_{\mathrm{sub}}(n) < \mathcal{R}_{\mathrm{pol}}(n).
\end{equation}
Moreover, for sufficiently large $N \in \mathbb{N}$, the cost functions satisfy
\begin{equation}
    \mathcal{C}_{\mathrm{sup}}(N) > \mathcal{C}_{\mathrm{pol}}(N).
\end{equation}
The hierarchies induced by the residual and cost functions provide a framework to characterize the complexity blow-up of functions, as discussed in Remark~\ref{rk:complexity_blowup}.
\end{remark}
Following this, we introduce the two complexity classes considered in this work.
Following this, we present the definition of the two complexity classes considered in this document. 
\begin{definition}\label{def:pol_comp}
Let $X(\Omega)$ be a Banach space defined as the completion $X(\Omega)=\overline{C^0(\Omega)}^{||\cdot||_X}$, with a computable structure as introduced in Definition~\ref{def:comp_str}. We say that a function $f\in X(\Omega)$ is \emph{$X$-computable in polynomial-time} if there exists a sequence $(q_n)_{n\in \mathbb{N}}\subseteq \Pi_\infty(\Omega)$ of  $C^0$-computable polynomials with polynomial-time computable coefficients, and an \emph{exponential} residual function  $\mathcal{R}_\mathrm{exp}:\mathbb{N}\rightarrow \mathbb{R}_+$, such that 
\begin{equation}\label{eq:conv_upper_bnd}
    \|q_n-f\|_X^2\leq \mathcal{R}_\mathrm{exp}(n),\textrm{ for all } n\in \mathbb{N}.
\end{equation}
Equivalently, the function $f$ can be approximated with precision $2^{-N}$ in $\mathcal{O}(\mathcal{C}_\mathrm{pol}(N))$ operations. 

Following this, we say that the function is $X$-computable in super-polynomial-time, if it is computable in $X$ and there exists a sub-exponential residual function $\mathcal{R}_{\mathrm{sub}}:\mathbb{N}\rightarrow\mathbb{R}_+$, such that 
\begin{equation}
 \|q_n-f\|_X^2\geq \mathcal{R}_{\mathrm{sub}}(n) ,\textrm{ for all } n\geq \mathbb{N}.
\end{equation}

Equivalently, the function $f$ can be approximated with precision $2^{-N}$ in at least $\mathcal{C}_\mathrm{sup}(N)$ operations.
\end{definition}
\begin{remark}
    Note that the polynomial-time characterization from Definition~\ref{def:pol_comp} requires the sequence of approximating polynomials to have polynomial-time computable coefficients. This is a necessary condition, as there exist sequences of polynomials whose coefficients are not computable in polynomial-time, as shown in~\cite{Boche2026-kq}.  

   Moreover, this condition ensures that the $n$-th approximating polynomial $q_n \in \Pi_n(\Omega)$ can itself be computed in polynomial time. Indeed, $q_n$ can be expressed as a linear combination of polynomially many Chebyshev basis functions $\{T_\alpha\}_{\alpha \in A_{n,d}}$, where $|A_{n,d}| = (n+1)^d$.

Consequently, the total cost of computing $q_n$ satisfies
\begin{equation}
    \mathcal{C}_{q_n}(n) = \mathcal{O}\big(C_q\cdot(n+1)^d\big),
\end{equation}
where $C_q\in \mathbb{N}$ denotes the maximal (polynomial) cost of computing the product of a single coefficient and its corresponding basis function.

In particular, if the residual converges exponentially fast as in~\eqref{eq:conv_upper_bnd}, then achieving an accuracy of order $2^{-N}$ requires a polynomial number of operations. Indeed, since $n = \mathcal{O}(N)$ suffices, we obtain
\begin{equation}
    \mathcal{C}_{q_n}(N) = \mathcal{O}\big(C_q\cdot(N+1)^d\big),
\end{equation}
which yields polynomial complexity.
\end{remark}
The next result presents a characterization of Banach spaces with a computability structure. 
\begin{theorem}\label{thm:compt_structures}
    Let $X$ be a Banach space such that the space of continuous functions $C^0\subseteq X$ is effectively dense in $X$, i.e., for every $f\in X$ there exists a sequence $(q_n)_{n\in\mathbb{N}}\subseteq C^0$ converging effectively to $f$. Then, the space $X$ has a computability structure.
\end{theorem}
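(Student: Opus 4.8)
The plan is to exhibit a concrete computability structure on $X$ and then check that it satisfies the axioms recalled in Definition~\ref{def:comp_str}. The natural candidate is the collection $\mathcal{S}$ of all sequences $(f_n)_{n\in\mathbb{N}}\subseteq X$ arising as effective $X$-limits of $C^0$-computable double sequences, exactly in the spirit of Definition~\ref{def:comp_bsp}: we put $(f_n)\in\mathcal{S}$ iff there is a $C^0$-computable $(g_{n,k})_{n,k\in\mathbb{N}}$ with $\|f_n-g_{n,k}\|_X\le\varphi(n,k)$ for a recursive modulus $\varphi$ vanishing effectively in $k$, uniformly in $n$. By the effective density hypothesis, $\mathcal{S}$ is nonempty — indeed every $f\in X$ occurs as a constant sequence in $\mathcal{S}$ — and it contains all $C^0$-computable functions (take $g_{n,k}=f_n$) and, via the effective Weierstrass characterization (Definition~\ref{def:C_0_comp}), all computable sequences of polynomials in $\bigcup_m\Pi_m$.

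First I would verify the \emph{linear-forms axiom}: closure of $\mathcal{S}$ under computable linear combinations over a recursively bounded index range. Given $(f_n),(h_n)\in\mathcal{S}$ with $C^0$-computable approximants $(g_{n,k})$, $(\tilde g_{n,k})$, the corresponding combinations of the $g$'s and $\tilde g$'s are again $C^0$-computable — here one uses that $C^0(\Omega)$ itself carries the Pour--El--Richards computability structure \cite{Pour-El1989-yg}, so $C^0$-computability is preserved under computable linear combinations — and by the triangle inequality they converge in the $X$-norm to the intended limit with a modulus assembled from $\varphi$ and $\tilde\varphi$. The only care required is the standard bookkeeping that merges the two moduli into a single recursive function, uniformly in all indices.

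Next I would verify the \emph{limit axiom}: if $(f_{n,k})_{n,k}$ is a computable double sequence in $\mathcal{S}$ converging effectively in $X$ to $(f_n)_n$, then $(f_n)_n\in\mathcal{S}$. Each $f_{n,k}$ is itself an effective $X$-limit of $C^0$-computable functions $(g_{n,k,m})_m$, and a diagonal argument produces one $C^0$-computable double sequence $(h_{n,l})$ with $\|f_n-h_{n,l}\|_X\to 0$ effectively: for target precision $2^{-N}$ choose recursively $k=k(n,N)$ with $\|f_n-f_{n,k}\|_X\le 2^{-N-1}$ and then $m=m(n,k,N)$ with $\|f_{n,k}-g_{n,k,m}\|_X\le 2^{-N-1}$; both choices are recursive because each convergence is effective, and their composition is recursive. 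This "composition of effective convergences is again effective" step is the technical heart of the argument.

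Finally, the \emph{norm axiom}: for $(f_n)\in\mathcal{S}$ the reverse triangle inequality $\bigl|\,\|f_n\|_X-\|g_{n,k}\|_X\,\bigr|\le\|f_n-g_{n,k}\|_X\le\varphi(n,k)$ shows that $(\|f_n\|_X)_n$ is an effective limit of $(\|g_{n,k}\|_X)_{n,k}$, hence a computable sequence of reals provided $(\|g_{n,k}\|_X)_{n,k}$ is. So everything reduces to computability of the $X$-norm on the dense computable subset, i.e. on $C^0$-computable functions (equivalently on polynomials): for $X=C^0(\Omega)$ this is immediate, and for $X=H^k(\Omega)$ it follows from exactness of the Sobolev cubature on polynomials, so $\|Q\|_{H^k(\Omega)}^2=\mathfrak{q}^T\mathbb{W}_k\mathfrak{q}$ is obtained exactly from rational data. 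I expect this norm-computability requirement on the generating polynomials to be the main obstacle — or, more precisely, the point that must be folded into the meaning of "effectively dense"; once it is in place, $\mathcal{S}$ satisfies all three axioms and is the desired computability structure, and the whole proof can equivalently be phrased as an application of the Pour--El--Richards theorem on the computability structure generated by an effective dense sequence, taking the canonical basis $\{x^\alpha\}$ as generators.
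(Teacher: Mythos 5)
Your proof follows the same route as the paper's: explicitly naming the candidate structure $\mathcal{S}$ (the paper leaves it implicit) and then verifying each of the three axioms of Definition~\ref{def:comp_str} by essentially the same triangle-inequality and diagonalization arguments. The one place you are more careful than the paper is the norm axiom: the paper asserts without further comment that $(\|q_{n,j}\|_X)_{n,j}$ is a computable (double) sequence of reals, which, as you rightly flag, implicitly assumes that the $X$-norm is computable on $C^0$-computable (or polynomial) functions; this hypothesis must either be folded into the notion of ``effectively dense'' or supplied separately, e.g.\ via Sobolev cubature exactness on polynomials when $X = H^k(\Omega)$.
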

\vspace{-1em}\begin{proof}
    The proof is given in the Appendix~\ref{appendix_A}.
\end{proof}
\begin{remark}
Using the standard result that $C^0_c(\Omega)$ spaces are dense in $L^2(\Omega)$ \cite{Rudin1986-qe}, it follows that Sobolev spaces are a natural example of Banach spaces equipped with a computability structure. 
\end{remark}
Following the definitions of computability presented in this section, we can establish the computability and complexity of the Sobolev cubature approximation of the loss functionals and their corresponding Euler updates.
\begin{proposition}\label{prop:comp_cub}
Let $f\in H^k(\Omega)$ be an $H^k$-computable function, then the $k$-th order Sobolev cubature
$$
I^k_n[f]=\|\mathcal{I}_n[f]\|_{H^k(\Omega)}^2,
$$
is computable. Furthermore, if $f$ is an $H^k$-computable function in polynomial-time, then $I^k_n[f]$ is computable in polynomial-time.
\end{proposition}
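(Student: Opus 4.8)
The plan is to reduce the claim about $I^k_n[f] = \|\mathcal{I}_n[f]\|_{H^k(\Omega)}^2 = \mathfrak{u}^T \mathbb{W}_k \mathfrak{u}$ to the computability of a finite composition of operations on computable reals, where $\mathfrak{u} = f(P_n(\Omega))$ is the vector of point evaluations of $f$ on the Legendre grid. First I would observe that since $f$ is $H^k$-computable, by Definition~\ref{def:comp_bsp} there is a sequence of $C^0$-computable functions $(g_m)_{m\in\mathbb{N}}$ converging effectively to $f$ in $H^k(\Omega)$; because $k \ge 1$ (the cubature is of positive order) the Sobolev embedding $H^k(\Omega) \hookrightarrow C^0(\Omega)$ gives effective convergence in the supremum norm as well, so $f \in C^0(\Omega)$ is itself $C^0$-computable. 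Point evaluation of a $C^0$-computable function at a computable point is a computable operation (the Legendre points $P_n(\Omega)$ are computable reals, being roots of Legendre polynomials with rational coefficients, hence computable by standard root-finding with effective error control), so each entry of $\mathfrak{u}$ is a computable real, uniformly in $n$.

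Next I would argue that the Sobolev cubature weight matrix $\mathbb{W}_k = \sum_{|\beta|\le k} (\mathbb{D}^\beta)^T \mathbb{W} \mathbb{D}^\beta$ has computable (indeed, for fixed $n$, effectively computable) entries: the differentiation matrices $\mathbb{D}^\beta$ are determined by derivatives of Lagrange basis polynomials at Legendre nodes, which are algebraic expressions in the (computable) nodes, and the Legendre weights $\omega_\alpha = \|L_\alpha\|_{L^2(\Omega)}^2$ are computable (exact Gauss--Legendre quadrature renders them rational in the nodes, or they are otherwise computable integrals of polynomials). Since computable reals form a field closed under the arithmetic operations and finite sums, the quadratic form $\mathfrak{u}^T \mathbb{W}_k \mathfrak{u}$ is a computable real; and because the whole construction is uniform in $n$, the double sequence is $C^0$-computable in the sense of Definition~\ref{def:comp_bsp}, giving computability of $I^k_n[f]$.

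For the polynomial-time statement I would track precision through the same composition. If $f$ is $H^k$-computable in polynomial time, then to evaluate $\mathfrak{u}$ to precision $2^{-N}$ takes $\mathcal{O}(\mathcal{P}(N))$ operations for a polynomial $\mathcal{P}$ (using the effective modulus from Definition~\ref{def:pol_comp} together with the effective Sobolev embedding, whose modulus is itself polynomial); the matrix $\mathbb{W}_k$ for fixed $n$ is a fixed finite array of computable reals approximable to precision $2^{-N}$ in polynomial time; and evaluating the bilinear form to precision $2^{-N}$ costs a polynomial overhead in $N$ (standard error propagation for products and sums of a fixed number of bounded operands loses only $O(\log)$ bits and multiplies work by a polynomial factor). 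Composing these polynomial-time subroutines yields a polynomial-time algorithm for $I^k_n[f]$, i.e.\ an exponential bound $\|\,(\text{approximant})_N - I^k_n[f]\,\|^2 \le 2^{-N}$ achieved in $\mathcal{O}(\mathcal{P}'(N))$ steps for some polynomial $\mathcal{P}'$.

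The main obstacle, and the step deserving the most care, is the transfer from $H^k$-computability to effective pointwise ($C^0$) computability: Definition~\ref{def:comp_bsp} only guarantees effective approximation in the $H^k$-norm, and one must invoke an \emph{effective} version of the Sobolev embedding theorem — i.e.\ a computable constant $C$ with $\|v\|_{C^0(\Omega)} \le C \|v\|_{H^k(\Omega)}$ for $k > d/2$ — to conclude that point evaluation at the Legendre nodes is well-defined and computable; the borderline/low-regularity cases would need the hypothesis $k \ge \lceil d/2\rceil + 1$ or the observation that the approximants $g_m$ are already smooth and the effective $H^k$-Cauchy property upgrades to an effective $C^0$-Cauchy property via the embedding. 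Everything else is bookkeeping over the closure properties of the computable reals and uniformity in $n$.
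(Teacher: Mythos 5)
Your proposal is substantively correct and follows a route close to the paper's, but with a useful refinement. The paper proves computability of $I_n^k[f]$ by first showing that the Lagrange interpolant $f_n = \mathcal{I}_n[f] \in \Pi_n(\Omega)$ is itself $H^k$-computable (via the Rodriguez formula for Legendre polynomials, computability of their roots, $C^0$-computability of the Lagrange basis, and closure of the computable reals under finite arithmetic), and then noting that a computable polynomial has computable derivatives and integrals, so $\|f_n\|_{H^k(\Omega)}^2$ is computable. You instead work directly with the discrete quadratic form $\mathfrak{u}^T \mathbb{W}_k \mathfrak{u}$, separately arguing that the entries of $\mathfrak{u} = f(P_n(\Omega))$ and of the weight matrix $\mathbb{W}_k$ are computable; these decompositions are near-equivalent and rest on the same primitives. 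The step you flag as the main obstacle --- upgrading $H^k$-computability to effective $C^0$-computability of the point evaluations $f(p_\alpha)$ --- is one the paper handles in a single sentence without explicitly invoking the Sobolev embedding, and your caution here is warranted: the transfer needs $k > d/2$ (your opening claim that $k \ge 1$ suffices to give $H^k(\Omega) \hookrightarrow C^0(\Omega)$ is not correct in general dimension, though you correct yourself later in the paragraph), or alternatively the assumption $f \in AC^{k-1}(\Omega)$ used elsewhere in the paper, for the cubature's point evaluations to be well-defined in the first place. Your polynomial-time bookkeeping likewise matches the paper's appeal to polynomial-time computability of the Legendre grid via Golub--Welsch, so the second half of the claim goes through on the same grounds.
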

\vspace{-1em}\begin{proof}
    The proof of Proposition~\ref{prop:comp_cub} can be found in Appendix~\ref{appendix_A}.
\end{proof}
Following Proposition~\ref{prop:comp_cub} we can characterize the computability of the explicit Euler gradient flow. 
\begin{corollary}\label{cor:comp_GF}
    Let $\mathcal{L}^n:C^0(\Omega)\rightarrow \mathbb{R}_+$ be a loss of the form
    \begin{equation}
        \mathcal{L}^n[u]:=\|\mathcal{F}[u_n]-f_n\|_{H^k(\Omega)}^2,
    \end{equation}
    with $f_n\in \Pi_{n}(\Omega)$ a computable polynomial, and $\mathcal{F}:\Pi_{n}(\Omega)\rightarrow \Pi_{n}(\Omega)$ a differentiable operator with $\mathcal{F}[u_n]$ and $\nabla\mathcal{F}[u_n]$ being $C^0$-computable functions. Then, the $k$-th iteration of the explicit Euler gradient flow is $C^0$-computable.  
\end{corollary}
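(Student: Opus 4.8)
The plan is to argue by induction on the iteration index $j$ that every iterate $\hat{u}_{\theta^j}$ (the ``$k$-th iteration'' being understood as an arbitrary fixed $j\in\mathbb{N}$) is a polynomial in $\Pi_n(\Omega)$ whose coefficient vector $\theta^j\in\mathbb{R}^{|A_{n,d}|}$ consists of computable reals. Once this is established the conclusion is immediate: a polynomial with computable coefficients is $C^0$-computable in the sense of Definition~\ref{def:C_0_comp}, since the constant sequence $p_m:=\hat{u}_{\theta^j}$ already converges to it effectively.

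First I would collapse the Sobolev norm in $\mathcal{L}^n$ to a finite quadratic form. Because $\mathcal{F}[u_n]-f_n\in\Pi_n(\Omega)$ and the Sobolev cubature of degree $n$ is exact on polynomials of degree at most $2n+1$, one has $\mathcal{L}^n[u]=\mathfrak{r}^{T}\mathbb{W}_k\,\mathfrak{r}$ with $\mathfrak{r}:=(\mathcal{F}[u_n]-f_n)(P_n(\Omega))$ and $\mathbb{W}_k=\sum_{|\beta|\le k}(\mathbb{D}^{\beta})^{T}\mathbb{W}\mathbb{D}^{\beta}$. The Legendre nodes $P_n(\Omega)$, the Legendre weights $\omega_\alpha=\|L_\alpha\|_{L^2(\Omega)}^2$, and the differentiation matrices $\mathbb{D}^{\beta}$ all have computable entries, hence $\mathbb{W}_k$ is a computable matrix, and by Proposition~\ref{prop:comp_cub} the loss value $\mathcal{L}^n[\hat{u}_\theta]$ is computable whenever $\hat{u}_\theta$ is. Writing $\hat{u}_\theta=\sum_{\alpha\in A_{n,d}}\theta_\alpha L_\alpha$ and using the Lagrange property $\hat{u}_\theta(p_\beta)=\theta_\beta$, the map $\theta\mapsto\mathfrak{r}$ factors through point evaluation of $\mathcal{F}[u_n]-f_n$ on the (computable) Legendre grid; its value and Jacobian $J(\theta)$ are therefore expressible through $\mathcal{F}[u_n](P_n(\Omega))$ and $\nabla\mathcal{F}[u_n](P_n(\Omega))$, which are computable by the hypothesis that $\mathcal{F}[u_n]$ and $\nabla\mathcal{F}[u_n]$ are $C^0$-computable, together with computability of evaluation at computable points.

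By the chain rule the Euler update direction is $\nabla_\theta\mathcal{L}^n[\hat{u}_\theta]=2\,J(\theta)^{T}\mathbb{W}_k\,\mathfrak{r}$, a finite composition of additions and products of computable quantities, hence a computable function of $\theta$. For the induction: $\theta^0$ is a computable initial guess; assuming $\theta^j$ has computable entries, $\hat{u}_{\theta^j}$ is computable, so $\mathfrak{r}$, $J(\theta^j)$, and thus $\nabla_\theta\mathcal{L}^n[\hat{u}_{\theta^j}]$ are computable by the previous step, and $\theta^{j+1}=\theta^j-\delta\tau\,\nabla_\theta\mathcal{L}^n[\hat{u}_{\theta^j}]$ is again computable because the computable reals are closed under addition and under multiplication by the (computable) step size $\delta\tau$. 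This closes the induction, and after finitely many steps $\hat{u}_{\theta^j}\in\Pi_n(\Omega)$ has computable coefficients, so it is $C^0$-computable.

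The main obstacle is the middle step: showing that differentiation through the quadratic cubature form and through the operator $\mathcal{F}$ preserves computability. Once the exactness identity reduces the $H^k$-norm to the fixed finite form $\mathfrak{r}^{T}\mathbb{W}_k\mathfrak{r}$, this amounts to verifying that the algebraic data $P_n(\Omega)$, $\{L_\alpha\}_{\alpha\in A_{n,d}}$, $\mathbb{D}^{\beta}$, $\mathbb{W}$ are computable and that the assumption on $\nabla\mathcal{F}[u_n]$ indeed supplies a computable Jacobian; the remaining operations are finite-dimensional arithmetic, for which effectivity is automatic. It should also be recorded as a standing hypothesis that $\theta^0$ and $\delta\tau$ are computable, as is implicit in the statement.
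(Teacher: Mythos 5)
Your proof is correct and follows essentially the same route as the paper's: write the gradient of the cubature loss as the finite matrix expression $2\nabla\mathfrak{F}(\theta)^{T}\mathbb{W}_k(\mathfrak{F}(\theta)-f_n)$, observe that each factor is computable (by the hypotheses on $\mathcal{F}$ and by Proposition~\ref{prop:comp_cub} for the cubature matrices), and conclude by induction that the Euler iterates have computable coefficients. You spell out the induction and the standing computability assumptions on $\theta^0$ and $\delta\tau$ more explicitly than the paper, which leaves these implicit, but the argument is the same.
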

\vspace{-1em}\begin{proof}
    The full gradient of the loss is given by
    \begin{equation}
    \begin{aligned}
        \nabla\mathcal{L}^n[u] &= 2\sum\limits_{|\beta|\leq k}\nabla\mathfrak{F}(u)^T(\mathbb{D^\beta})^T\mathbb{W}\mathbb{D^\beta}(\mathfrak{F}(u)-f_n)\\&
        = 2\nabla\mathfrak{F}(u)^T\mathbb{W}_k(\mathfrak{F}(u)-f_n),
    \end{aligned}
    \end{equation}
    where $\nabla\mathfrak{F}(u):=\nabla\mathcal{F}[u](P_{n}(\Omega))$ and $\mathfrak{F}(u):=\mathcal{F}[u](P_{n}(\Omega))$. By the assumptions on $\mathcal{F}$, both quantities are computable. The result then follows from Proposition~\ref{prop:comp_cub}, noting that since the Sobolev cubature matrices are computable, so is the operator $\mathbb{W}_k = \sum\limits_{|\beta|\leq k}(\mathbb{D}^\beta)^\top \mathbb{W} \mathbb{D}^\beta$. And hence, the iterates 
    $$
   u^{j+1} =u^j-\delta\tau \nabla\mathcal{L}^n[u^{j}],
    $$
    with a computable $\delta\tau>0$, are computable. 
\end{proof}
\subsection{Complexity of PDE Solutions}
Building on Definition~\ref{def:comp_bsp}, the error decomposition in Theorem~\ref{thm:erro_dec}, and the results of Propositions~\ref{prop:conv_an} and \ref{prop:abs_cont}, we present the main contributions of this work. We characterize the computational complexity of solutions to a class of PDEs in terms of the structural properties of their associated variational loss. In particular, we establish sufficient conditions for $H^k$-computability, identify criteria guaranteeing polynomial-time computability, and outline regimes in which the solution undergoes a complexity blowup. We start by defining the PDE forward problem, and a formal characterization of complexity blowup. 
\begin{definition}\label{def:gen_PDE_loss}
    We define the PDE forward loss $\mathcal{L}: H^{k}(\Omega)\rightarrow\mathbb{R}_+$, $k\in \mathbb{N}$, as
\begin{equation}\label{Eq:PDE_loss}
    \mathcal{L}[u]:=\|\mathcal{N}[u]-f\|_{H^a(\Omega)}^2+\|u-g\|_{H^{b}(\partial\Omega)}^2,\textrm{ for all } u\in H^k(\Omega),
\end{equation}
where $\mathcal{N}:H^k(\Omega)\rightarrow H^a(\Omega)$ is a (possibly non-linear) operator, $g\in AC^{k'-3/2}(\partial\Omega)\cap H^{k'-1/2}(\partial\Omega),\,\,f\in AC^{k'-1}(\Omega)\cap H^{k'}(\Omega),$ 
for some $a,b\in \mathbb{Z}$, satisfying $a\leq k/2$ and $b\leq k/2-1/4$, and some $k'>\max\{k,k+2a, k+2b\}$.  
\end{definition}
Building on the notions of computability and complexity classes on Banach spaces introduced in Definitions~\ref{def:comp_bsp} and~\ref{def:complexity_blowup}, we define \emph{complexity blowup} in the context PDE forward problems.
\begin{definition}\label{def:complexity_blowup}
    Let $\mathcal{L}: H^{k}(\Omega)\rightarrow\mathbb{R}_+$, $k\in \mathbb{N}$, be a PDE loss defined as in Eq~\ref{Eq:PDE_loss}, with boundary condition $g\in H^{k-1/2}(\partial\Omega),$ and source term $f\in H^{a}(\Omega)$ with $a\in \mathbb{N}$ satisfying the condition given in Definition~\ref{def:gen_PDE_loss}. Given $f$ and $g$, let $u^*\in H^k(\Omega)$ be the corresponding minimizer of the loss $\mathcal{L}$. We say that the solution $u^*$ exhibits \emph{complexity blowup} if the input data (i.e., $f$ and $g$) is computable in polynomial-time, while  $u^*$ is computable in at least super-polynomial-time. 
\end{definition}
\begin{remark}\label{rk:complexity_blowup}
    Following Definition~\ref{def:pol_comp}, a PDE solution $u^*\in H^k(\Omega)$ exhibits complexity blowup, as formalized in Definition~\ref{def:complexity_blowup}, if it is $H^k$-computable, and there exists a residual polynomial function $\mathcal{R}_{\mathrm{pol}}:\mathbb{N}\times\mathbb{N}\rightarrow\mathbb{N}$ and a sub-exponential residual function $\mathcal{R}_{\mathrm{sub}}:\mathbb{N}\times \mathbb{N}\rightarrow\mathbb{N}$ such that the $j$-th iterate $\hat{u}_{\theta^j}\in \mathcal{S}_m(\Omega)$ of a discrete gradient flow is estimated as   
    \begin{equation}\label{eq:compl_bounds}
       \mathcal{R}_{\mathrm{sub}}(j,\mathrm{dim}(\mathcal{S}_m))\leq ||\hat{u}_{\theta^j}-u^*||_{H^k(\Omega)}^2\leq \mathcal{R}_{\mathrm{pol}}(j,\mathrm{dim}(\mathcal{S}_m)).
    \end{equation}
    Here $u^*\in H^k(\Omega)$ denotes the minimizer of the loss $\mathcal{L}$. 
\end{remark}
Next, we define a set of assumptions regarding the PDE forward loss.
\begin{assumption}\label{ass:PDE_loss}
    Let $\mathcal{L}:H^k(\Omega)\to\mathbb{R}+$ be a PDE loss. We assume that $\mathcal{L}$ is a differentiable, convex, coercive functional satisfying the QGC with modulus $\mu\in \mathbb{R}_{>0}$ at a minimizer $u^*\in H^k(\Omega)$. Moreover, for every $u_n \in \Pi_{n}(\Omega)$, both $\mathcal{N}[u_n]$ and $\nabla\mathcal{N}[u_n]$ are $C^0$-computable. Finally, let $\mathcal{L}^n:\mathcal{S}_m(\Omega)\rightarrow\mathbb{R}_+$ denote the Sobolev cubature approximation of $\mathcal{L}$, we assume that the map $\theta \mapsto\nabla \mathcal{L}^n[\hat{u}_\theta]$ is $L$-Lipschitz continuous for all $\hat{u}_\theta\in \mathcal{S}_m(\Omega)$. 
\end{assumption}
Following this, we provide the main computability theorem for a class of PDEs. 
\begin{theorem}\label{thm:compt}
Let $\mathcal{L}:\mathcal{S}_n(\Omega)\rightarrow \mathbb{R}_+$ be a PDE loss, as in Definition~\ref{def:gen_PDE_loss}, satisfying Assumption~\ref{ass:PDE_loss}, with $\mathcal{S}_{n}(\Omega):=\Pi_n(\Omega)$ as the space of polynomial of degree $n\in \mathbb{N}$. If, in addition, we assume that the minimizer $u^*$ satisfies $u^*\in AC^{k'-1}(\Omega)\cap BV^{k'}(\Omega),$ 
then $u^*$ is an $H^k$-computable solution of the PDE in \eqref{eq:gen_PDE} almost everywhere in $\Omega$. 
\end{theorem}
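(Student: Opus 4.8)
The plan is to instantiate the error decomposition of Theorem~\ref{thm:erro_dec} with ambient Hilbert space $H(\Omega)=H^k(\Omega)$ and surrogate spaces $\mathcal{S}_\theta(\Omega)=\Pi_n(\Omega)$, to bound each of the three contributions $\epsilon_{\textrm{app}}$, $\epsilon_{\textrm{int}}$, $\epsilon_{\textrm{opt}}$ by an \emph{effectively computable} rate, and then to invoke the computability of the discrete gradient flow in order to produce a $C^0$-computable double sequence converging effectively to $u^*$ in $H^k(\Omega)$; by Definition~\ref{def:comp_bsp} this is exactly $H^k$-computability. The regularity assumption $u^*\in AC^{k'-1}(\Omega)\cap BV^{k'}(\Omega)$ drives the approximation rate, while Assumption~\ref{ass:PDE_loss} supplies the QGC, the Lipschitz smoothness of $\theta\mapsto\nabla\mathcal{L}^n[\hat u_\theta]$, and the $C^0$-computability of $\mathcal{N}[u_n]$ and $\nabla\mathcal{N}[u_n]$ on $\Pi_n(\Omega)$.

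\textbf{Step 1: effective bounds on the three error terms.} Fix a target precision $N\in\mathbb{N}$ and split the budget $2^{-N}$ into three equal parts. Since $\Pi_n(\Omega)$ contains the Chebyshev projection $u^*_n$ of $u^*$ and $u^*\in AC^{k'-1}(\Omega)\cap BV^{k'}(\Omega)$, Proposition~\ref{prop:abs_cont} gives an explicit algebraic convergence rate for $\|u^*-u^*_n\|$ in every Sobolev norm occurring in $\mathcal{L}$; the constraints $a\le k/2$, $b\le k/2-1/4$ and $k'>\max\{k,k+2a,k+2b\}$ of Definition~\ref{def:gen_PDE_loss} are precisely what keep the relevant derivative orders strictly below $k'/2$, so that the derivative estimates of Proposition~\ref{prop:abs_cont} remain summable and the associated rates tend to $0$. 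Using $\mathcal{L}[u^*]=0$ together with the coercivity/QGC and the differentiability (hence local Lipschitz continuity near $u^*$) of $\mathcal{L}$ from Assumption~\ref{ass:PDE_loss},
\begin{equation}
\mu\,\|\hat u_{\theta^*}-u^*\|_{H^k(\Omega)}^2\le \mathcal{L}[\hat u_{\theta^*}]-\mathcal{L}[u^*]\le \mathcal{L}[u^*_n]\lesssim \|u^*_n-u^*\|,
\end{equation}
so $\epsilon_{\textrm{app}}(m)$ decays at an explicit algebraic rate in $n$. For $\epsilon_{\textrm{int}}(n)=|\mathcal{L}[\hat u_{\theta^*}]-\mathcal{L}^n[\hat u_{\theta^*}]|$ I would apply Theorem~\ref{thm:sob_cub_s} to the two squared-norm terms of $\mathcal{L}$ with orders $s=a$ and $s=b$, using $f\in AC^{k'-1}(\Omega)\cap H^{k'}(\Omega)$, $g\in AC^{k'-3/2}(\partial\Omega)\cap H^{k'-1/2}(\partial\Omega)$, and the fact that $\hat u_{\theta^*}\in\Pi_n(\Omega)$ is integrated exactly up to degree $2n+1$; this again yields an explicit algebraic rate in $n$. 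Finally, since $\mathcal{L}$ satisfies the QGC and $\theta\mapsto\nabla\mathcal{L}^n[\hat u_\theta]$ is $L$-Lipschitz, equation~\eqref{eq:opt_exp} of Theorem~\ref{thm:erro_dec} yields geometric decay of $\epsilon_{\textrm{opt}}(j)$ as $j\to\infty$. Because every constant appearing in these rates ($\mu$, $L$, the $k'$-th order variation $V_{k'}$, $\|f\|_{H^{k'}(\Omega)}$, the dimensional constant $C(d)$) is computable, the assignment $e:N\mapsto(n(N),j(N))$ choosing degree and iteration count so that the three parts sum to at most $2^{-N}$ is a recursive function; combined with \eqref{Eq:erro_dec} this gives $\|u^*-\hat u_{\theta^{j(N)}}\|_{H^k(\Omega)}^2\le 2^{-N}$.

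\textbf{Step 2: computability of the iterates and the a.e. statement.} With a computable initial guess (e.g. $\theta^0=0$) and a rational learning rate $\delta\tau$, Assumption~\ref{ass:PDE_loss} makes $\mathcal{N}[u_n]$ and $\nabla\mathcal{N}[u_n]$ $C^0$-computable on $\Pi_n(\Omega)$, and the Sobolev cubature weight matrix $\mathbb{W}_k=\sum_{|\beta|\le k}(\mathbb{D}^\beta)^\top\mathbb{W}\mathbb{D}^\beta$ is computable; by Proposition~\ref{prop:comp_cub} and Corollary~\ref{cor:comp_GF} each explicit Euler iterate $\hat u_{\theta^{j+1}}=\hat u_{\theta^j}-\delta\tau\,\nabla\mathcal{L}^n[\hat u_{\theta^j}]$ is $C^0$-computable, and $(n,j)\mapsto\hat u_{\theta^j}$ is a $C^0$-computable double sequence. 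Composing this double sequence with the recursive selector $e$ from Step~1 produces a $C^0$-computable sequence that converges effectively to $u^*$ in $H^k(\Omega)$, so $u^*$ is $H^k$-computable. Since $u^*\in BV^{k'}(\Omega)$, its weak derivatives of order up to $k'$ — in particular those entering $\mathcal{N}$ — are defined Lebesgue-a.e., so the equations $\mathcal{N}[u^*]=f$ in $\Omega$ and $u^*|_{\partial\Omega}=g$ of \eqref{eq:gen_PDE} hold almost everywhere, which is the qualifier in the statement.

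\textbf{Main obstacle.} The delicate point is the bound on $\epsilon_{\textrm{app}}$ in Step~1: this error is the distance from the \emph{minimizer of the cubature loss} over $\Pi_n(\Omega)$ to $u^*$, not the best polynomial approximation error, so one must pass through the QGC (a lower bound on the loss gap) and a quantitative continuity estimate for $\mathcal{L}$ at $u^*$ in the $H^k$-topology. Reconciling the Sobolev orders $a,b$ of the loss, the ambient order $k$, and the regularity order $k'$ — exactly the function of the inequalities $a\le k/2$, $b\le k/2-1/4$, $k'>\max\{k,k+2a,k+2b\}$ — and verifying that every constant generated along the way is computable, so that $e$ is genuinely recursive, is the technical heart of the proof; the remaining pieces are direct invocations of Theorems~\ref{thm:erro_dec} and~\ref{thm:sob_cub_s}, Proposition~\ref{prop:comp_cub}, and Corollary~\ref{cor:comp_GF}.
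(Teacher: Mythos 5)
Your proposal follows essentially the same route as the paper's own proof: invoke Theorem~\ref{thm:erro_dec} to decompose the $H^k$-error into approximation, integration, and optimization pieces, bound these via Proposition~\ref{prop:abs_cont}, Theorem~\ref{thm:sob_cub_s}, and the QGC-plus-Lipschitz exponential rate, then combine Corollary~\ref{cor:comp_GF} and Proposition~\ref{prop:comp_cub} with a recursive selector $(n(N),j(N))$ to exhibit an effectively convergent $C^0$-computable sequence, giving $H^k$-computability by Definition~\ref{def:comp_bsp}. You actually supply a bit more detail than the paper in two places — the chain $\mu\|\hat u_{\theta^*}-u^*\|^2\le\mathcal{L}[u^*_n]\lesssim\|u^*_n-u^*\|$ justifying the $\epsilon_{\textrm{app}}$ bound (which the paper delegates to~\cite{Suarez2025}), and the explicit argument for the ``almost everywhere'' qualifier via $BV^{k'}$ regularity — but the skeleton is the same.
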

To prove Theorem~\ref{thm:compt}, we rely on the definition of the reconstruction problem together with a computability result for its solution.

\begin{definition}\label{Def:rec_prob}
    Let $h \in H^s(\mathcal{B})$ be a Sobolev function in $H^s(\mathcal{B})$, for some $s\in \mathbb{R}$ over a domain $\mathcal{B} \subseteq \mathbb{R}^{N_B}$, with $N_B \in \mathbb{N}$.  
    The \emph{reconstruction problem} with polynomial surrogates consists of finding the optimal $\hat{u}_\theta \in \Pi_{n}(\mathcal{B})$ that minimizes  
    \begin{equation}
        \mathcal{L}_r[\hat{u}_\theta] := \| \hat{u}_\theta - h \|_{H^s(\mathcal{B})}^2 .
    \end{equation}
\end{definition}
Subsequently, we establish sufficient conditions under which the reconstruction problem can be solved with polynomial complexity.
\begin{lemma}\label{lemm:comp_initial_data}
    Let $h\in H^k(\mathcal{B})$ be a Sobolev function. If, in addition $h\in AC^{k-1}(\mathcal{B})\cap BV^{k}(\mathcal{B})$, then $h$ is $H^k$-computable for every $0\leq s< k/2$. Furthermore, if $h$ is $\rho$-analytic, then the function is $H^k$-computable in polynomial-time.
\end{lemma}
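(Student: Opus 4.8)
The plan is to produce an explicit, effectively convergent sequence of polynomial reconstructions of $h$ and to read off its rate of convergence from Propositions~\ref{prop:conv_an} and~\ref{prop:abs_cont}. Identifying $\mathcal{B}$ with a reference cube $(-1,1)^{N_B}$ (the relevant case in the applications; a general box is handled by an affine computable change of variables), I take $(h_n)_{n\in\mathbb{N}}$ to be the Chebyshev projections of $h$ from Theorem~\ref{thm:cheb_exp}. Since $h\in AC^{k-1}(\mathcal{B})\subseteq C^0(\mathcal{B})$ is $C^0$-computable (as is implicit for the input of a reconstruction problem), its Chebyshev coefficients $\theta_\alpha$ are integrals of $h$ against the fixed computable weight $\omega_{N_B}$ and the computable polynomials $T_\alpha$, hence computable reals effectively in $\alpha$; therefore each $h_n$ is a $C^0$-computable polynomial and $(h_n)_{n}$ is a computable sequence. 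Moreover the exact minimiser $\hat{u}_{\theta^*}\in\Pi_n(\mathcal{B})$ of $\mathcal{L}_r$ is the $H^s$-orthogonal projection of $h$ onto $\Pi_n(\mathcal{B})$; it is likewise computable (it solves a linear system built from the computable Sobolev-cubature weights) and, being the best $H^s$-approximant, satisfies $\|\hat{u}_{\theta^*}-h\|_{H^s(\mathcal{B})}\le\|h_n-h\|_{H^s(\mathcal{B})}$. Hence it suffices to estimate $\|h-h_n\|_{H^s(\mathcal{B})}$.

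For the first claim I expand, for integer $s$, $\|h-h_n\|_{H^s(\mathcal{B})}^2=\sum_{|\beta|\le s}\|\partial^\beta(h-h_n)\|_{L^2(\mathcal{B})}^2\le|\mathcal{B}|\sum_{|\beta|\le s}\|\partial^\beta(h-h_n)\|_{C^0(\mathcal{B})}^2$, and insert the derivative estimate of Proposition~\ref{prop:abs_cont}, $\|\partial^\beta(h-h_n)\|_{C^0}\lesssim C(N_B)\,n^{2|\beta|}V_k/(k(n-k)^k)$. The $|\beta|=s$ term dominates, so $\|h-h_n\|_{H^s(\mathcal{B})}^2\lesssim n^{4s}V_k^2/(k^2(n-k)^{2k})=\mathcal{O}(n^{4s-2k})$, which tends to $0$ precisely because $s<k/2$. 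Given a computable bound on the $k$-th order variation $V_k$, this yields a recursive $e\colon\mathbb{N}\to\mathbb{N}$ with $n\ge e(N)\Rightarrow\|h-h_n\|_{H^s(\mathcal{B})}^2\le 2^{-N}$, so by Definition~\ref{def:comp_bsp} the target $h$ is $H^s$-computable; the non-integer range $0\le s<k/2$ then follows from the standard Sobolev interpolation inequality between the neighbouring integer orders, whose combined exponent is again $4s-2k<0$.

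For the $\rho$-analytic case I repeat the same expansion but now use Proposition~\ref{prop:conv_an}: an analytic $h$ is $C^\infty$, so the sum may be taken up to order $k$, giving $\|h-h_n\|_{H^k(\mathcal{B})}^2\lesssim C(N_B)\,M_f^2\,n^{4k}\rho_*^{-2n}/(\rho_*-1)^2$. The polynomial prefactor is absorbed by the exponential, so $\|h-h_n\|_{H^k(\mathcal{B})}^2\le\mathcal{V}(n)$ for an exponential function $\mathcal{V}$ (e.g. $\mathcal{V}(n)\sim\tilde\rho^{-n}$ with $1<\tilde\rho<\rho_*^{2}$), and Definition~\ref{def:pol_comp} gives that $h$ is $H^k$-computable in polynomial time; equivalently the reconstruction problem is solved with polynomial complexity.

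The technical core — the $C^0$ bounds on $\partial^\beta(h-h_n)$ — is already packaged in Propositions~\ref{prop:conv_an} and~\ref{prop:abs_cont}, so I do not expect a deep obstacle. The points needing care are: (i) verifying that $s<k/2$ is exactly the threshold at which $n^{4s-2k}\to0$, so the regularity hypothesis is sharp for this argument (at $s=k/2$ the bound is only $\mathcal{O}(1)$); (ii) effectivity of the convergence, i.e. the constants $|\mathcal{B}|$, $V_k$, $M_f$, $\rho_*$ must come equipped with computable bounds in order for $e$ to be recursive, which belongs to the computable-structure hypotheses; and (iii) the routine transfer of the $d$-dimensional approximation estimates from $(-1,1)^{N_B}$ to the general domain $\mathcal{B}$ (via restriction/extension) together with the interpolation step needed for non-integer $s$.
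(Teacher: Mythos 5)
Your proof takes a genuinely different route from the paper's. The paper sets up the Sobolev-cubature approximation $\mathcal{L}^n_r[\hat{u}_\theta]=\|\hat{u}_\theta-\mathcal{I}_n[h]\|_{H^s(\mathcal{B})}^2$, proves it is $2\sigma$-convex with $\sigma$ the smallest eigenvalue of $\mathbb{V}^T\mathbb{W}_s\mathbb{V}$ and that its gradient is $L_r$-Lipschitz, runs the explicit Euler gradient flow, and then invokes Theorem~\ref{thm:erro_dec} together with Proposition~\ref{prop:abs_cont} / Proposition~\ref{prop:conv_an} / Theorem~\ref{thm:sob_cub_s} to split the error into approximation, integration and optimization parts, giving $\mathcal{O}(n^{2s-k})+\mathcal{O}((1-\sigma/L_r)^{-j})$. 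You bypass the optimization machinery entirely: you take the Chebyshev projections $h_n$ directly, argue they form a computable sequence, and read off $\|h-h_n\|_{H^s}^2\lesssim n^{4s-2k}$ from the pointwise derivative estimate in Proposition~\ref{prop:abs_cont}. This is simpler, gives a sharper rate (squared rather than linear in $n^{2s-k}$, since you have no cubature error term), and makes the threshold $s<k/2$ transparent. What you lose is that the paper's proof is deliberately constructive in a way that feeds forward: the coercivity constant $\sigma$ and Lipschitz constant $L_r$ it establishes are explicitly referenced later (the proof of Theorem~\ref{thm:compt} invokes ``the smallest eigenvalue $\sigma$ \ldots as in the proof of Lemma~\ref{lemm:comp_initial_data}''), and the whole point of the lemma in the paper's architecture is to exhibit a gradient-flow algorithm whose complexity can be counted. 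Your approach proves the same computability fact but does not deliver those artifacts.

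One step in your argument is wrong as written, though not fatally. You assert that the exact minimiser $\hat{u}_{\theta^*}$ of $\mathcal{L}_r[\hat{u}_\theta]=\|\hat{u}_\theta-h\|_{H^s(\mathcal{B})}^2$ ``solves a linear system built from the computable Sobolev-cubature weights''. It does not: the cubature-based normal equations $\mathbb{V}^T\mathbb{W}_s\mathbb{V}\theta=\mathbb{V}^T\mathbb{W}_s\,h(P_n(\mathcal{B}))$ produce the interpolation-based surrogate, i.e.\ they minimise $\|\hat{u}_\theta-\mathcal{I}_n[h]\|_{H^s}^2$, whose solution is just $\mathcal{I}_n[h]$ itself, not the $H^s$-orthogonal projection of $h$. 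Computing the true $H^s$-orthogonal projection needs the exact inner products $\langle h,T_\alpha\rangle_{H^s(\mathcal{B})}$, which involve derivatives of $h$ and are not supplied by the Legendre cubature. Since you in fact do all the bounding with the Chebyshev projection $h_n$ anyway, the fix is simply to drop the $\hat{u}_{\theta^*}$ detour and present $(h_n)_n$ as the effective approximating sequence required by Definition~\ref{def:comp_bsp}. You should also say a word more on the effectivity of the Chebyshev coefficients $\theta_\alpha=\tfrac{2^d}{\pi^d}\int\omega_d\,h\,T_\alpha$: the weight is singular at $\partial\Omega$, so the clean route is to bound $|\theta_\alpha[h]-\theta_\alpha[p_m]|\le 2^d\|h-p_m\|_{C^0}$ for the effective polynomial approximants $p_m$ of $h$ and pass to the limit, which makes the coefficients computable effectively in $\alpha$ and $m$; the paper sidesteps this entirely because interpolation at Legendre nodes only needs pointwise evaluation of $h$.
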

\vspace{-1em}\begin{proof}
    The proof of Lemma~\ref{lemm:comp_initial_data} is included in Appendix~\ref{appendix_B}.
\end{proof}
\vspace{-1em}\begin{proof}[Proof of Theorem~\ref{thm:compt}]
    We start by noting that since the functional $\mathcal{L}:H^{k}(\Omega)\rightarrow \mathbb{R}_+$ is lower semi-continuous, convex, and coercive, hence the existence of a minimizer $u^*\in H^{k}(\Omega)$ is guaranteed by applying Tonelli's direct method \cite{Maso1993}. 

    Let $\hat{u}_{\theta^j}\in\Pi_{n}(\Omega)$ be a polynomial surrogate model obtained after $j$-iterations of the explicit Euler gradient flow of the Sobolev cubature approximation $\mathcal{L}^n:\Pi_n(\Omega)\rightarrow\mathbb{R}_+$ of the loss in \eqref{Eq:PDE_loss}. Since $\mathcal{L}$ satisfies the QGC, by Theorem~\ref{thm:erro_dec} we have 
    \begin{equation}\label{eq:err_dec}
        \|\hat{u}_{\theta^j}-u^*\|_{H^k(\Omega)}^2\leq \epsilon_{\mathrm{app}}(n) + \epsilon_{\mathrm{int}}(n)+\epsilon_{\mathrm{opt}}(j).
    \end{equation}

    Moreover, since $\mathcal{L}$ is convex and satisfies the QGC, the map $\theta\mapsto \mathcal{L}^n[\hat{u}_\theta]$ is convex and satisfies the RSI condition by Lemma~\ref{lemma:QGC}, with modulus $\mu\sigma \in\mathbb{R}_+$, where $\sigma$ is the smallest eigenvalue of the matrix $\mathbb{V}^T\mathbb{W}_k\mathbb{V}\in \mathbb{R}^{|A_{n,d}|\times|A_{n,d}|}$ as in the proof of Lemma~\ref{lemm:comp_initial_data}. Therefore, by Theorem~\ref{thm:erro_dec}, the optimization error converges with the rate
    \begin{equation}
        \epsilon_{\mathrm{opt}}(j) = \mathcal{O}\left((1-\mu\sigma/L)^{j}\right),
    \end{equation}
where we used the Lipschitz continuity of $\theta \mapsto \nabla\mathcal{L}^n[\hat{u}_\theta]$.
Moreover, by Propositions~\ref{prop:abs_cont} and the regularity assumptions on $f$ and $g$, we know that 
$$
\epsilon_{\mathrm{app}}(n) + \epsilon_{\mathrm{int}}(n) = \mathcal{O}(n^{-k'+k}+n^{-k^*}),
$$
where $k^*:= \min\{k'-k-2a, k'-k-2b\}$.
To conclude the proof, we apply Lemma~\ref{lemm:comp_initial_data} to ensure the computability of $f$ and $g$, and Corollary~\ref{cor:comp_GF} to establish the computability of the explicit Euler gradient flow. This implies that for all $n\geq e_n(N)$ and $j\geq e_j(N)$, where $e_n(N):=\max\{2^{N/(k-k')},2^{N/k^*}\}$ and $e_j(N) =N\log(1-\mu\sigma/L) $, the convergence error achieves fast convergence, i.e.,
\begin{equation}
    \|\hat{u}_{\theta^j}-u^*\|_{H^k(\Omega)}^2\leq 2^{-N}.
\end{equation}
Equivalently, in order to obtain an error of order $2^{-N}$ we need to compute $j=\mathcal{O}(N\log(1-\mu\sigma/L))$ iterations with $\mathcal{O}(F(\max\{2^{dN/(k-k')},2^{dN/k^*}\}))$ computations each, where $F:\mathbb{N}\rightarrow\mathbb{N}$ is a function proportional to the complexity of $\theta^j-\delta\tau\nabla\mathcal{L}^n[\hat{u}_{\theta^j}]$. 
\end{proof}
Note that Theorem~\ref{thm:compt} provides only a qualitative characterization of the complexity of the solution to the PDE forward problem. The following corollary refines this by linking the complexity class of the solution with its regularity.
\begin{corollary}\label{cor:complexity}
   Let the assumptions of Theorem~\ref{thm:compt} hold, and suppose that the source term $f$ and the boundary condition $g$ are $\rho$-analytic functions. Assume further that the PDE solution operator is regularity-preserving, i.e., $u^*$ is $\rho^*$-analytic for some $\rho^*\in \mathbb{R}_{>0}$. If, in addition $\mathcal{N}[u_n]$ and $\nabla\mathcal{N}[u_n]$ are $C^0$-polynomial-time computable functions for all $u_n\in \Pi_{n}(\Omega)$, then the solution $u^*$ is $H^k$-computable in polynomial-time.

Conversely, if the PDE operator fails to preserve the regularity of the data, that is, if the solution $u^*$ is not $\rho$-analytic for any $\rho\in \mathbb{R}_{>0}$ but only satisfies $u^*\in AC^{k-1}(\Omega)\cap H^k(\Omega)$ for some $k>\frac{d}{2}$, then $u^*$ exhibits a complexity blowup and is $H^k$-computable in at least super-polynomial-time.
\end{corollary}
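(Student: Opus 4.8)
\emph{Proof strategy.} The plan is to derive both statements from the error decomposition of Theorem~\ref{thm:erro_dec}, specialised to the polynomial surrogate space $\mathcal{S}_\theta(\Omega)=\Pi_n(\Omega)$, the difference between the two regimes lying only in the decay rate of the three error terms. In both cases Theorem~\ref{thm:compt} already guarantees that $u^*$ is $H^k$-computable, so the task reduces to counting operations. Writing $\hat u_{\theta^j}$ for the $j$-th explicit Euler iterate of the Sobolev-cubature loss $\mathcal{L}^n$ of degree $n$, Theorem~\ref{thm:erro_dec} gives $\|u^*-\hat u_{\theta^j}\|_{H^k(\Omega)}^2\lesssim\epsilon_{\mathrm{app}}(n)+\epsilon_{\mathrm{int}}(n)+\epsilon_{\mathrm{opt}}(j)$. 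I would estimate each term under the two sets of hypotheses and then read off, via Definition~\ref{def:pol_comp}, the number of operations needed to reach precision $2^{-N}$.

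\textbf{Polynomial-time regime.} Using the QGC, $\epsilon_{\mathrm{app}}(n)=\|\hat u_{\theta^*}-u^*\|_{H^k(\Omega)}^2$ is controlled (by a C\'ea-type argument comparing $\hat u_{\theta^*}$ against the Chebyshev projection of $u^*$) by the best $H^k$-approximation of $u^*$ in $\Pi_n(\Omega)$, which by $\rho^*$-analyticity of $u^*$ and Proposition~\ref{prop:conv_an} decays like $n^{2c}\rho_*^{-n}$ for a fixed $c$ depending on $k$. For $\epsilon_{\mathrm{int}}(n)$, the residuals $\mathcal{N}[\hat u_{\theta^*}]-f$ and $\hat u_{\theta^*}-g$ are $\rho$-analytic, since the surrogate is a polynomial, $f,g$ are $\rho$-analytic, and $\mathcal{N}$ preserves analyticity on polynomials; hence Theorem~\ref{thm:sob_cub_s} yields an exponentially small cubature error. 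For $\epsilon_{\mathrm{opt}}(j)$, Assumption~\ref{ass:PDE_loss} ($L$-Lipschitz gradient), Lemma~\ref{lemma:QGC} and the bound \eqref{eq:opt_exp} of Theorem~\ref{thm:erro_dec} give geometric decay $(1-\mu\sigma/L)^j$. Therefore precision $2^{-N}$ is reached with $n=\mathcal{O}(N)$ and $j=\mathrm{poly}(N)$, so $\dim(\Theta)=(n+1)^d=\mathcal{O}(N^d)$; by Corollary~\ref{cor:comp_GF} together with the hypothesis that $\mathcal{N}[u_n]$ and $\nabla\mathcal{N}[u_n]$ are $C^0$-polynomial-time computable and that the Sobolev-cubature matrices are computable of polynomial size, each Euler step costs $\mathrm{poly}(n)=\mathrm{poly}(N)$ operations, while $f$ and $g$ are themselves polynomial-time computable by Lemma~\ref{lemm:comp_initial_data}. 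Assembling these pieces, $u^*$ is approximated to precision $2^{-N}$ in $\mathrm{poly}(N)$ operations, i.e.\ it is $H^k$-computable in polynomial time.

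\textbf{Complexity-blowup regime.} Here $f,g$ are $\rho$-analytic, hence polynomial-time computable by Lemma~\ref{lemm:comp_initial_data}, and $u^*$ is $H^k$-computable by Theorem~\ref{thm:compt}; it remains to show $u^*$ is \emph{not} $H^k$-computable in polynomial time. I would argue by contradiction via the inverse theorem: if some polynomial sequence $(q_n)$ satisfied $\|q_n-u^*\|_{H^k(\Omega)}^2\le M\rho^{-2n}$, then the Sobolev embedding $H^k(\Omega)\hookrightarrow C^0(\Omega)$, valid since $k>d/2$, would give $\|q_n-u^*\|_{C^0(\Omega)}\le M'\rho^{-n}$, whence Theorem~\ref{thm:blowup} forces $u^*$ to extend analytically to the polycylinder $E(\rho)$, contradicting the assumption that $u^*$ is not $\rho$-analytic for any $\rho$. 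Thus no polynomial sequence converges at an exponential rate, so by Definition~\ref{def:pol_comp} $u^*$ is not polynomial-time computable; moreover the same embedding transfers the sub-exponential lower bound of Corollary~\ref{cor:onlyif_an} to the $H^k$-norm, while Proposition~\ref{prop:abs_cont} and Theorem~\ref{thm:sob_cub_s}, applied to $u^*\in AC^{k-1}(\Omega)\cap H^k(\Omega)$, supply a matching (algebraic) upper rate $\epsilon_{\mathrm{app}}(n)+\epsilon_{\mathrm{int}}(n)=\mathcal{O}(n^{-(k'-k)}+n^{-k^*})$ for the Sobolev-cubature surrogate, so $u^*$ is computable in at least super-polynomial time. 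Since the input data is polynomial-time computable while $u^*$ is not, $u^*$ exhibits complexity blowup in the sense of Definition~\ref{def:complexity_blowup}.

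\textbf{Main obstacle.} The delicate point is the optimisation term in the polynomial-time regime: both the RSI/QGC modulus $\mu\sigma$ and the Lipschitz constant $L$ of $\theta\mapsto\nabla\mathcal{L}^n[\hat u_\theta]$ depend on the degree $n$ through the extreme eigenvalues of the Sobolev-cubature operator $\mathbb{V}^{\top}\mathbb{W}_k\mathbb{V}$ on $\Pi_n(\Omega)$, so one must verify that the conditioning $\mu\sigma/L$ degrades at most polynomially in $n$, which is exactly what keeps $j=\mathrm{poly}(N)$ once $n$ is coupled to $N$; establishing this quantitative conditioning estimate for the cubature operator is the technical heart of the forward direction. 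A secondary subtlety is that the blowup conclusion hinges on the equivalence, built into Definition~\ref{def:pol_comp}, between polynomial-time computability and the existence of an exponential-rate polynomial approximation, so that Theorem~\ref{thm:blowup}, applied through the Sobolev embedding that makes the hypothesis $k>d/2$ essential, is precisely the tool that excludes the polynomial-time class.
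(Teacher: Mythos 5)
Your proposal follows essentially the same route as the paper's proof: the error decomposition of Theorem~\ref{thm:erro_dec} combined with Proposition~\ref{prop:conv_an} and geometric optimisation decay for the analytic regime, and Theorem~\ref{thm:blowup} pushed through the Sobolev embedding $H^k(\Omega)\hookrightarrow C^0(\Omega)$ (which is exactly why $k>d/2$ is required) together with Corollary~\ref{cor:onlyif_an} for the blowup regime; your contradiction framing of the second part is a cosmetic variant of the paper's direct lower-bound statement. The one thing worth noting is that the conditioning concern you flag as the ``main obstacle'' — whether $\mu\sigma/L$ degrades at most polynomially in $n$ when $n$ is coupled to $N$ — is genuinely not resolved in the paper's proof either, which treats $1-\mu\sigma/L$ as a fixed constant when counting iterations, so your caveat identifies a real gap shared by both arguments rather than a defect unique to your proposal.
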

\vspace{-1em}\begin{proof}
For the first part of the statement, we follow the proof of Theorem~\ref{thm:compt}, and apply Proposition~\ref{prop:conv_an} yielding
\begin{equation}
    \|\hat{u}_{\theta^j}-u^*\|_{H^k(\Omega)}^2\leq \mathcal{O}\left(\rho^{-n}+(1-\mu\sigma/L)^{j}\right).
\end{equation}
This implies that the error is of order $\mathcal{O}(2^{-N})$ in $\mathcal{O}(N\log(1-\mu\sigma/L))$ iterations with 
$\mathcal{O}\left(F_\mathcal{N}(N\log(2)/\log(\rho)+1)^d\right)$ operations. Here $F_\mathcal{N}:\mathbb{N}\rightarrow\mathbb{N}$ is a polynomial function proportional to the complexity of the operation $\theta^j-\delta\tau\nabla\mathcal{L}^n[\hat{u}_{\theta^j}]$, which has polynomial complexity due to the assumptions on the operator $\mathcal{N}$, yielding the claim.

For the second part of the statement, we apply Lemma~\ref{lemm:comp_initial_data}, which ensures that the Dirichlet boundary condition $g$ and the source term $f$ are computable in polynomial-time. Furthermore, by Theorem~\ref{thm:compt} the solution is $H^k$-computable. However, by Theorem~\ref{thm:blowup} together with the Sobolev embedding theorem \cite{brezis2011}, Chapter 9, for $k>d/2$, there exists a sub-exponential function $\mathcal{R}_\mathrm{sub}:\mathbb{N}\to\mathbb{R}+$ such that
\begin{equation}
    \|\hat{u}_{\theta^j}-u^*\|_{H^k(\Omega)}^2\geq\|\hat{u}_{\theta^j}-u^*\|_{C^0(\Omega)}^2 \geq \mathcal{R}_\mathrm{sub}(n).
\end{equation}
 Therefore, achieving an error of order 
 $$\|\hat{u}_{\theta^j}-u^*\|_{H^k(\Omega)}^2\leq 2^{-N},$$ requires at least $\mathcal{C}_\mathrm{sup}(N)$ operations, where $\mathcal{C}_\mathrm{sup}:\mathbb{N}\rightarrow \mathbb{N}$ is a super-polynomial function. In particular, this shows that the solution is not computable in polynomial-time.  
\end{proof}
\subsection{Scope of the Variational Framework}\label{subsec:Scope}
This subsection clarifies the scope and limitations of the computability and
complexity results presented in this work, with particular emphasis on the role
of the underlying computability notions and their implications for PDE
solutions.

\begin{remark}[Scope of the computability framework]
As shown in Chapter~1 of~\cite{Pour-El1989-yg}, Definition~\ref{def:C_0_comp} is
equivalent to the classical notion of Turing computability~\cite{Grzegorczyk1957},
namely sequential computability together with effective uniform continuity.
However, alternative notions of computability exist and do not necessarily
coincide with the one adopted in this document. In particular, the computability framework
introduced in Definition~\ref{def:comp_bsp} describes a proper subclass of the
classical Banach-Mazur computable functions~\cite{Boche2026-kq}; namely, 
every $X$-computable function is Banach-Mazur computable, while the converse does
not hold. Consequently, all computability results established in this work must
be interpreted relative to the specific framework defined in
Definition~\ref{def:comp_bsp}.
\end{remark}

\begin{remark}[Scope of complexity for PDE solutions]\label{rk:Complexity_PDEs}
The $H^{k}$-computability results for PDE solutions in
Theorem~\ref{thm:compt} are obtained via sequences of discrete Euler iterations
that converge effectively to the corresponding infinite-dimensional PDE solution.
These results are therefore intrinsically tied to the variational gradient-flow
framework developed in this paper and provide \emph{sufficient, but not
necessary}, conditions for computability and polynomial-time complexity, as a consequence of the convergence bounds in ~\eqref{Eq:erro_dec} and~\eqref{eq:conv_upper_bnd}. In particular, the non-existence of alternative approximation schemes that yield computable PDE solutions under different assumptions cannot be concluded from our results.

More generally, complexity characterizations of PDE solutions depend strongly on
the chosen notion of computability. Definition~\ref{def:pol_comp} introduces an
\emph{approximation-restricted} notion of polynomial-time computability, which
requires the existence of exponentially convergent sequences of polynomials that are computable in polynomial-time. This notion is strictly stronger than
Ko-Friedman polynomial-time computability~\cite{Steinberg2017}. Specifically, as shown in Theorem~\ref{thm:equival_compt}, $C^0$-polynomial-time computability in the sense of Definition~\ref{def:pol_comp} implies Ko–Friedman polynomial-time computability for Lipschitz continuous functions; however, the converse implication does not hold. A prototypical counterexample is $f(x):=|x|$, which is
polynomial-time computable by direct evaluation, yet admits no exponentially
convergent sequence of polynomials in the $C^0$-norm, as a consequence of Corollary~\ref{cor:onlyif_an}.
\end{remark}

\begin{remark}[Scope of complexity blow-up]
The complexity blow-up result in Corollary~\ref{cor:complexity} is intrinsically
linked to the notions of $X$-computability, introduced in
Definition~\ref{def:comp_bsp}, and polynomial-time complexity, as formalized in
Definition~\ref{def:pol_comp}. Once computability in $X(\Omega)$ is established,
the sub-exponential convergence lower bound implies that no polynomial
approximation scheme can converge exponentially fast. Consequently, any
admissible polynomial approximation must exhibit super-polynomial computational
complexity. This result is therefore invariant with respect to the particular
discretization scheme used to construct the discrete gradient-flow sequence of
polynomials, but not to the computability framework. 

Additional sources of computational hardness may induce complexity blow-up beyond
the loss of regularity of the PDE solution; such effects are not captured within the framework developed in this work. Examples include linear ODEs exhibiting
complexity blow-up~\cite{Boche2021} without any loss of regularity of the PDE solution. Conversely, there may exist PDE solutions
that exhibit complexity blow-up under the computability notion of
Definition~\ref{def:C_0_comp}, but not under alternative computability frameworks.
A deeper analysis of these distinctions is beyond the scope of the present work.
\end{remark}

In the next section, we present two examples illustrating how the theoretical framework developed above can be applied to a linear and a non-linear equation.
\section{Complexity of PDE Solutions: Examples}\label{sec:Experiments}
In this section, we present two examples illustrating the variational framework of Theorem~\ref{thm:compt}. These include a case where the solution is computable in polynomial-time, as well as a case where the solution exhibits a complexity blowup.
\subsection{Complexity of solutions to the Poisson Equation}
We start by defining the Poisson forward problem.
\begin{definition}\label{Def:Poisson_FP}
    Let $g\in H^{1/2}(\partial\Omega)$ be the Dirichlet boundary condition and $f\in L^2(\Omega)$ the source term. We define the \emph{Poisson forward loss} $\mathcal{L}:H^1(\Omega)\rightarrow\mathbb{R}_+$ as 
    \begin{equation}
        \mathcal{L}[u]:= \|\Delta u-f\|_{H^{-1}(\Omega)}^2+\|u|_{\partial\Omega}-g\|_{H^{1/2}(\partial\Omega)}^2.
    \end{equation}
\end{definition}
Next, we state the result, ensuring the well-posedness of the Poisson variational problem. A similar assertion can be found in Chapter 1 of~\cite{Bochev2009-ut}.
\begin{theorem}\label{thm:prop_poiss_loss}
    The Poisson loss $\mathcal{L}:H^1(\Omega)\rightarrow\mathbb{R}_+$ from Definition~\ref{Def:Poisson_FP} is a $\lambda$-convex, H\"older continuous functional,
    satisfying the apriori coercivity estimate 
    \begin{equation}\label{eq:apr_est}
        \|u\|_{H^1(\Omega)}^2\lesssim \mathcal{L}[u]+1.
    \end{equation}
    Therefore, there exists a unique solution $u^*\in H^1(\Omega)$ minimizing $\mathcal{L}$.
\end{theorem}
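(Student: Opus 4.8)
The plan is to put $\mathcal{L}$ into the affine–quadratic form covered by Lemma~\ref{lemm:lam_convlin} and then extract convexity, continuity and the coercivity estimate \eqref{eq:apr_est} from one elliptic a priori bound. Concretely, I introduce the bounded linear operator
\begin{equation*}
A : H^1(\Omega)\longrightarrow H^{-1}(\Omega)\times H^{1/2}(\partial\Omega),\qquad Au:=\bigl(\Delta u,\; u|_{\partial\Omega}\bigr),
\end{equation*}
where $\Delta u\in H^{-1}(\Omega)$ is the distributional Laplacian tested against $H^1_0(\Omega)$, and I equip the target with the norm $\|(\phi,\psi)\|^2:=\|\phi\|_{H^{-1}(\Omega)}^2+\|\psi\|_{H^{1/2}(\partial\Omega)}^2$. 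Setting $b:=(f,g)$, with $f\in L^2(\Omega)\hookrightarrow H^{-1}(\Omega)$, the loss becomes $\mathcal{L}[u]=\|Au-b\|^2$. Boundedness of $A$ is immediate from boundedness of $\Delta:H^1\to H^{-1}$ and of the trace operator $H^1\to H^{1/2}$.

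The crucial step is the lower bound $\|Au\|^2\gtrsim\|u\|_{H^1(\Omega)}^2$, i.e.\ coercivity of $A$ in the sense of Lemma~\ref{lemm:lam_convlin}. I would get this from the fact that $A$ is an isomorphism onto its target: injectivity is uniqueness for the harmonic Dirichlet problem (if $\Delta u=0$ and $u|_{\partial\Omega}=0$ then $u=0$), and surjectivity follows by lifting the boundary datum with a bounded right inverse of the trace operator and then solving the remaining $H^{-1}$ right-hand side problem in $H^1_0(\Omega)$ via Lax--Milgram, since Poincar\'e's inequality makes $-\Delta:H^1_0(\Omega)\to H^{-1}(\Omega)$ an isomorphism. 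The open mapping theorem then yields a bounded inverse, hence $\|u\|_{H^1}\le C\bigl(\|\Delta u\|_{H^{-1}}+\|u|_{\partial\Omega}\|_{H^{1/2}}\bigr)\le \sqrt2\,C\,\|Au\|$, which is exactly coercivity with modulus $\mu=1/(2C^2)$. Lemma~\ref{lemm:lam_convlin} (in its evident two-Hilbert-space version, or directly via the Hessian $2A^{*}A\succeq 2\mu\,\mathrm{Id}$) then gives $\lambda$-convexity with $\lambda=2\mu>0$, and strict convexity forces uniqueness of the minimizer.

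The a priori estimate \eqref{eq:apr_est} is then cheap: $\|Au\|\le\|Au-b\|+\|b\|=\sqrt{\mathcal{L}[u]}+\|b\|$, so $\mu\|u\|_{H^1}^2\le\|Au\|^2\le 2\mathcal{L}[u]+2\|b\|^2$, giving $\|u\|_{H^1(\Omega)}^2\lesssim\mathcal{L}[u]+1$. For H\"older continuity I would note that $\mathcal{L}=\|\cdot\|^2\circ(A\cdot-b)$ is a composition of a bounded affine map with a smooth function, so
\begin{equation*}
|\mathcal{L}[u]-\mathcal{L}[v]|=\bigl|\langle A(u-v),\,Au+Av-2b\rangle\bigr|\le \|A\|\,\|u-v\|_{H^1}\bigl(\|A\|(\|u\|_{H^1}+\|v\|_{H^1})+2\|b\|\bigr),
\end{equation*}
which is Lipschitz, hence H\"older, on every bounded subset of $H^1(\Omega)$. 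Existence of a minimizer then follows from the direct method as in the proof of Theorem~\ref{thm:compt}: the coercivity estimate makes the sublevel sets bounded and thus weakly sequentially precompact, while convexity together with continuity yields weak lower semicontinuity.

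I expect the main obstacle to be the elliptic a priori estimate, and specifically the need to handle the negative-order right-hand side $\Delta u\in H^{-1}(\Omega)$ simultaneously with the $H^{1/2}$ trace term: decoupling them through a bounded trace lift and Lax--Milgram on $H^1_0(\Omega)$, and then re-coupling via the open mapping theorem, is the technical heart of the argument. Once the isomorphism $A$ and its coercivity constant are in hand, convexity, H\"older continuity and the existence/uniqueness of the minimizer are routine.
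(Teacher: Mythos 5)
Your proof is correct, but it takes a genuinely different and arguably cleaner route than the paper's. The paper works "by hand": it proves the coercivity estimate \eqref{eq:apr_est} directly by combining a trace lift $u_b:=\mathcal{E}[u|_{\partial\Omega}]$, the Poincar\'e inequality, and the explicit observation $\|\nabla u_0\|_{L^2(\Omega)}\le\|\Delta u_0\|_{H^{-1}(\Omega)}$ for $u_0\in H^1_0(\Omega)$, and then separately proves $\lambda$-convexity by computing first variations of $\mathcal{L}$ through the canonical embeddings $j^k$ and $j^{k^*}$. You instead package everything into one structural fact: $A=(\Delta\cdot,\ \mathrm{tr}\cdot):H^1(\Omega)\to H^{-1}(\Omega)\times H^{1/2}(\partial\Omega)$ is a bounded isomorphism, so the open mapping theorem hands you $\|u\|_{H^1}\le C\|Au\|$, after which Lemma~\ref{lemm:lam_convlin}, the a priori bound, and Lipschitz continuity on bounded sets all fall out in two lines each. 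The ingredients are the same (trace lift, Poincar\'e/Lax--Milgram in $H^1_0$), but your version factors them through the isomorphism $A$ instead of re-deriving the quantitative estimate twice, once for \eqref{eq:apr_est} and once for $\lambda$-convexity. The one thing to be explicit about, which you already flag, is that Lemma~\ref{lemm:lam_convlin} as stated is for an endomorphism $A:H\to H$, so you must either re-prove it for $A:H_1\to H_2$ (trivial, since the proof only uses $\|Au\|_{H_2}^2\ge\mu\|u\|_{H_1}^2$ together with $\nabla\mathcal{L}=2A^*(Au-b)$) or argue directly with $2A^*A\succeq 2\mu\,\mathrm{Id}$. Your continuity estimate via polarization, $|\mathcal{L}[u]-\mathcal{L}[v]|=|\langle A(u-v),Au+Av-2b\rangle|$, giving Lipschitz continuity on bounded sets, is in fact more careful than the paper's argument, which asserts $|\mathcal{L}_\partial[u]-\mathcal{L}_\partial[v]|\le\|\Delta(u-v)\|_{H^{-1}}^2$ and is literally false for a quadratic functional; in both proofs, however, what is actually needed is just strong continuity plus convexity, which yields weak lower semicontinuity, so neither conclusion is affected.
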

\vspace{-1em}\begin{proof}
    The proof of Theorem~\ref{thm:prop_poiss_loss} is included in Appendix~\ref{Appendix_C}.
\end{proof}
Next, we show how under certain assumptions the Poisson forward loss from Definition~\ref{Def:Poisson_FP} is computable in polynomial-time.
\begin{theorem}\label{thm:comp_poisson}
Let $\mathcal{L}: H^1(\Omega) \to \mathbb{R}+$ denote the Poisson forward loss introduced in \eqref{Def:Poisson_FP} in the hypercube $\Omega:=(-1,1)^d$. Suppose that the source term $f$ satisfies $f \in AC^{k-1}(\Omega) \cap BV^{k}(\Omega)$ and the Dirichlet boundary condition $g$ satisfies $g \in AC^{k-1/2}(\partial\Omega) \cap BV^{k+1/2}(\partial\Omega)$ for some $1<k \in \mathbb{N}$. Then the minimizer $u^* \in H^1(\Omega)$ of $\mathcal{L}$ is $H^1$-computable. 

Furthermore, if $f$ is $\rho$-analytic in $\Omega$ and $g$ is $\rho$-analytic in $\partial\Omega$, then the solution $u^*$ is $H^1$-computable in polynomial-time.
\end{theorem}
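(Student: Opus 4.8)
The plan is to exhibit the Poisson forward loss as a concrete instance of the abstract template of Theorem~\ref{thm:compt} (and, for the analytic regime, of Corollary~\ref{cor:complexity}) and then invoke those results directly; the only real work lies in checking Assumption~\ref{ass:PDE_loss} and in supplying the regularity of the minimizer. For the first, I would collect the structural properties from Theorem~\ref{thm:prop_poiss_loss}: $\mathcal{L}$ is $\lambda$-convex, Hölder continuous, and coercive via the estimate $\|u\|_{H^1(\Omega)}^2\lesssim\mathcal{L}[u]+1$, with a unique minimizer $u^*\in H^1(\Omega)$. Since $\lambda$-convexity is strictly stronger than the QGC (Remark~\ref{rk:lam_conv}), the QGC holds at $u^*$ with modulus $\lambda/2$. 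As $\mathcal{N}=\Delta$ is linear, for any polynomial surrogate $u_n\in\Pi_n(\Omega)$ the quantities $\Delta u_n$ and $\nabla\Delta u_n$ — realized through the polynomial differentiation matrices — are again polynomials and hence $C^0$-computable, in fact $C^0$-polynomial-time computable. Finally, $\mathcal{L}^n$ is quadratic in the surrogate coefficients $\theta$ (the Sobolev cubature of a residual affine in $\theta$), so $\theta\mapsto\nabla\mathcal{L}^n[\hat{u}_\theta]$ is an affine map and therefore globally $L$-Lipschitz, with $L$ the spectral norm of the associated Gram-type matrix. This yields Assumption~\ref{ass:PDE_loss}; one small adaptation is that the $H^{-1}$ residual must first be rewritten through its dual characterization — equivalently, the $H^1_0$-lifting used in the proof of Theorem~\ref{thm:prop_poiss_loss} — so that the positive-order Sobolev cubatures $I^k_n$ apply.

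Next I would verify the remaining hypothesis of Theorem~\ref{thm:compt}, namely $u^*\in AC^{k'-1}(\Omega)\cap BV^{k'}(\Omega)$. The assumptions $f\in AC^{k-1}(\Omega)\cap BV^k(\Omega)$ and $g\in AC^{k-1/2}(\partial\Omega)\cap BV^{k+1/2}(\partial\Omega)$ place $f$ and $g$ in the classes required by Proposition~\ref{prop:abs_cont}, Theorem~\ref{thm:sob_cub_s}, and the reconstruction estimate of Lemma~\ref{lemm:comp_initial_data}. Interior elliptic regularity for the Laplacian then transfers this to the solution: from $\Delta u^*=f\in H^{k}_{\mathrm{loc}}(\Omega)$ one obtains $u^*\in H^{k+2}_{\mathrm{loc}}(\Omega)$, which on every compact subdomain embeds into the required $AC^{k-1}\cap BV^{k}$ class (the dimension-dependent bookkeeping being routine). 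This is exactly the ``almost everywhere in $\Omega$'' qualification of Theorem~\ref{thm:compt}, since the corners of the hypercube $(-1,1)^d$ obstruct global regularity up to $\partial\Omega$, while the boundary term $\|u-g\|_{H^{1/2}(\partial\Omega)}^2$ is controlled separately by Lemma~\ref{lemm:comp_initial_data} applied on $\partial\Omega$. With Assumption~\ref{ass:PDE_loss} in force and $u^*$ in the required regularity class, Theorem~\ref{thm:compt} (through the error decomposition of Theorem~\ref{thm:erro_dec}) gives that $u^*$ is $H^1$-computable.

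For the polynomial-time statement I would appeal to analytic hypoellipticity of $\Delta$: since the Laplacian has constant (hence analytic) coefficients, any solution of $\Delta u^*=f$ with $f$ $\rho$-analytic is real-analytic in the interior of $\Omega$, and Cauchy-type estimates on a compact exhaustion produce an analytic extension of $u^*$ to an elliptic polycylinder $E(\rho^*)$ with $\rho^*>1$, so $u^*$ is $\rho^*$-analytic. Together with the fact that $u_n\mapsto\Delta u_n$ and $u_n\mapsto\nabla\Delta u_n$ are $C^0$-polynomial-time computable, the PDE operator is regularity-preserving and polynomial-time in the sense required by the first part of Corollary~\ref{cor:complexity}, which then yields that $u^*$ is $H^1$-computable in polynomial time.

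The main obstacle is the regularity-transfer step — quantitatively propagating the ($AC^{k-1}\cap BV^k$, resp. $\rho$-analytic) regularity of the data to $u^*$. Interior elliptic and analytic regularity are classical, but the non-smooth corners of $(-1,1)^d$ block propagation up to $\partial\Omega$, which is precisely why the $H^1$-computability is stated almost everywhere and why the boundary-mismatch term has to be handled independently through the reconstruction lemma. A secondary technical point, noted above, is making the $H^{-1}$ residual norm compatible with the positive-order Sobolev-cubature construction, for which the dual-norm (or $H^1_0$-lifting) reformulation is essential.
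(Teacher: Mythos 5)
Your overall plan is close to the paper's: verify the structural hypotheses (convexity, coercivity, QGC, Lipschitz-smooth gradient, computability of the operator), apply the error decomposition of Theorem~\ref{thm:erro_dec}, and transfer data regularity to $u^*$ via (hypo)ellipticity, invoking Corollary~\ref{cor:complexity} for the complexity classification. Where the proposal actually breaks down is the step you wave at as ``one small adaptation'': the $H^{-1}(\Omega)$ residual and the $H^{1/2}(\partial\Omega)$ trace term cannot be fed to positive-order Sobolev cubatures $I^k_n$, and neither of your two proposed fixes works. The dual characterization $\|v\|_{H^{-1}} = \sup_{w\in H^1_0}\langle v,w\rangle/\|w\|_{H^1}$ introduces an inner optimization that is not a cubature of anything, and the $H^1_0$-lifting $\|v\|_{H^{-1}} = \|(-\Delta)^{-1}v\|_{H^1_0}$ requires solving the Poisson problem to evaluate the loss, i.e., it is circular. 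The paper instead introduces a genuinely different cubature loss, $\mathcal{L}^n[\hat{u}_\theta] := \|\Delta\hat{u}_\theta - f_n\|_{L^2(\Omega)}^2 + n\|\hat{u}_\theta|_{\partial\Omega} - g_n\|_{L^2(\partial\Omega)}^2$, which is entirely in (computable) $L^2$-cubatures, and then uses the polynomial inverse inequality $\|Q_n\|_{H^{1/2}(\partial\Omega)}^2 \leq n\|Q_n\|_{L^2(\partial\Omega)}^2$ together with the trivial $\|\cdot\|_{H^{-1}}\leq\|\cdot\|_{L^2}$ to prove that this scaled $L^2$-loss \emph{dominates} the continuous loss on $\Pi_n$, from which $\lambda$-convexity of $\theta\mapsto\mathcal{L}^n[\hat{u}_\theta]$ follows via the a priori estimate of Theorem~\ref{thm:prop_poiss_loss} and the positive-definiteness of $\mathbb{T}_\Omega^T\mathbb{W}\mathbb{T}_\Omega$. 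That inverse-inequality device (and the resulting $n$-dependent scaling of the boundary term, which in turn enters the Lipschitz constant $L$) is the real technical content, and your proposal omits it.

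A secondary issue: you frame the proof as ``exhibit Poisson as an instance of Theorem~\ref{thm:compt},'' but Definition~\ref{def:gen_PDE_loss} requires $b\leq k/2 - 1/4$, which with $k=1$ and $b=1/2$ fails for the Poisson loss. This is precisely why the paper does not literally invoke Theorem~\ref{thm:compt} but instead reproduces the argument by hand through Theorem~\ref{thm:erro_dec} and Lemma~\ref{lemm:grad_SC}, establishing $\lambda$-convexity and Lipschitz-smoothness of the scaled cubature loss directly. Your regularity-transfer discussion (interior elliptic/analytic regularity, the corner obstruction, the almost-everywhere qualification) and your treatment of the analytic case via analytic hypoellipticity of $\Delta$ are both aligned with the paper, and your observation about the corners in $(-1,1)^d$ correctly anticipates the remark the paper places right after the theorem statement.
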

\begin{remark}
    Note that, in general, the assumptions in Theorem~\ref{thm:comp_poisson} are not sufficient to ensure that the solution $u^*$ inherits the regularity of the data, due to the presence of corners in the hypercube $[-1,1]^d$. To address this issue, we assume additional compatibility conditions \cite{Hell2014-wc} on the boundary term $g$. A detailed analysis of how the regularity of the domain influences the complexity of the solution will be the subject of future work.
\end{remark}
Before proving the theorem, we provide the following lemma.
\begin{lemma}\label{lemm:grad_SC}
Let $f\in L^2(\Omega)$ and $g\in H^{1/2}(\Omega)$ be polynomial-time computable functions and $\mathcal{L}^n:\mathcal{S}_n(\Omega) \rightarrow \mathbb{R}_+$ be a Sobolev cubature loss, defined as 
\begin{equation}\label{eq:sob_cubpoisson}
    \mathcal{L}^n[\hat{u}_\theta]:= \|\Delta \hat{u}_\theta-f_n\|_{L^2(\Omega)}^2 + n\| \hat{u}_\theta|_{\partial\Omega}-g_n\|_{L^2(\partial\Omega)}^2,\textrm{ for all } \hat{u}_\theta\in \Pi_n(\Omega),
\end{equation}
where $f_n:=\mathcal{I}_n[f]$ and $g_n:=\mathcal{I}_n[g]$. If the coefficients $\theta\in \Theta$ are computable in polynomial-time, then the $L^2$-gradient of the loss $\nabla\mathcal{L}^n[\hat{u}_\theta]\in \Pi_{n}(\Omega)$ is computable in polynomial-time for all $n\in \mathbb{N}$.
\end{lemma}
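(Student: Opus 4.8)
The plan is to compute the $L^2$-gradient of the Sobolev cubature loss $\mathcal{L}^n$ explicitly in terms of the polynomial differentiation matrices and the Legendre weight matrices, and then to verify that every object appearing in that expression is polynomial-time computable, so that the finite sum of products defining $\nabla\mathcal{L}^n[\hat u_\theta]$ is polynomial-time computable as well. Concretely, writing $\hat u_\theta = \sum_{\alpha\in A_{n,d}}\theta_\alpha L_\alpha$ and $\mathfrak{u}:=\hat u_\theta(P_n(\Omega))$, the Laplacian acts on the Lagrange coefficients through the matrix $\mathbb{D}_\Delta:=\sum_{i=1}^d \mathbb{D}^{2e_i}$, and the boundary trace is realized by a (computable) restriction/selection matrix $\mathbb{B}$ onto the boundary Legendre nodes. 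Then
\begin{equation}
\mathcal{L}^n[\hat u_\theta] = (\mathbb{D}_\Delta\mathfrak{u}-\mathfrak{f})^T\mathbb{W}(\mathbb{D}_\Delta\mathfrak{u}-\mathfrak{f}) + n\,(\mathbb{B}\mathfrak{u}-\mathfrak{g})^T\mathbb{W}^{\partial}(\mathbb{B}\mathfrak{u}-\mathfrak{g}),
\end{equation}
where $\mathfrak{f}:=f_n(P_n(\Omega))$, $\mathfrak{g}:=g_n(P_n(\partial\Omega))$, $\mathbb{W}$ is the Legendre weight matrix and $\mathbb{W}^\partial$ its boundary analogue. Differentiating the quadratic form gives
\begin{equation}
\nabla\mathcal{L}^n[\hat u_\theta] = 2\,\mathbb{D}_\Delta^T\mathbb{W}(\mathbb{D}_\Delta\mathfrak{u}-\mathfrak{f}) + 2n\,\mathbb{B}^T\mathbb{W}^{\partial}(\mathbb{B}\mathfrak{u}-\mathfrak{g}),
\end{equation}
which lies in $\Pi_n(\Omega)$ once reinterpreted through the Lagrange basis, exactly as in the general gradient formula in Corollary~\ref{cor:comp_GF}.

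The next step is the computability bookkeeping. The Legendre points $P_n(\Omega)$ are computable (they are roots of Legendre polynomials, hence algebraic numbers obtained from a computable recursion), and consequently the Lagrange basis, the differentiation matrices $\mathbb{D}^\beta$, and the weight matrices $\mathbb{W},\mathbb{W}^\partial$ are all computable; moreover their entries are given by closed-form finite formulas in $n$, so they are in fact polynomial-time computable in the precision parameter $N$, with matrix sizes $|A_{n,d}|=(n+1)^d$. The vectors $\mathfrak{f}=f(P_n(\Omega))$ and $\mathfrak{g}=g(P_n(\partial\Omega))$ are polynomial-time computable because $f$ and $g$ are polynomial-time computable by hypothesis and point evaluation at computable nodes preserves this (invoking Remark~\ref{rk:interpolation} for the interpolant and Proposition~\ref{prop:comp_cub} in the same spirit). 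Finally $\mathfrak{u}=\mathbb{V}\theta$ for the (computable) Vandermonde-type matrix $\mathbb{V}$ relating the $\theta$-coefficients to nodal values, and $\theta$ is polynomial-time computable by assumption. Since polynomial-time computable reals are closed under the finitely many additions and multiplications involved in the two matrix–vector products above, $\nabla\mathcal{L}^n[\hat u_\theta]$ is polynomial-time computable for every $n$.

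I expect the main obstacle to be purely notational rather than mathematical: one must set up the boundary trace operator $\mathbb{B}$ and the boundary Sobolev cubature $\mathbb{W}^\partial$ carefully enough that the gradient identity is literally correct (in particular handling the fact that the boundary of the hypercube is a union of faces on which lower-dimensional Legendre cubatures act), and then track that the bit-complexity of assembling and multiplying matrices of size $(n+1)^d$ stays polynomial in $N$ — which follows because $n$ itself only needs to grow polynomially in $N$ to reach precision $2^{-N}$ (by the convergence rates of Theorem~\ref{thm:sob_cub_s} and Proposition~\ref{prop:abs_cont}), and each arithmetic operation on $N$-bit numbers is polynomial in $N$. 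Once this accounting is in place, the conclusion is immediate from Proposition~\ref{prop:comp_cub} and Corollary~\ref{cor:comp_GF}.
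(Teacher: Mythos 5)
Your proposal is correct and follows essentially the same route as the paper: write the cubature loss as a finite-dimensional quadratic form in the nodal/coefficient vector using the differentiation matrix $\mathbb{D}_\Delta$, the Legendre weight matrices, and a boundary restriction, differentiate to obtain the explicit matrix–vector formula for $\nabla\mathcal{L}^n[\hat u_\theta]$, and then invoke Proposition~\ref{prop:comp_cub} together with closure of polynomial-time computable reals under finitely many arithmetic operations. The only cosmetic difference is that the paper parametrizes surrogates in a Chebyshev basis and so carries the Vandermonde matrices $\mathbb{T}_\Omega,\mathbb{T}_{\partial\Omega}$ explicitly in the gradient (obtaining the first variation via the adjoint $\Delta^*$ before applying the cubature), whereas you work directly in nodal values with a separate trace matrix $\mathbb{B}$ — the computability bookkeeping is identical either way.
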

\begin{proof}
The proof of Lemma~\ref{lemm:grad_SC} is included in the appendix~\ref{Appendix_C}    
\end{proof}
Building on this result, we now proceed to prove the computability of the solution $u^*\in H^1(\Omega)$.
\vspace{-0em}\begin{proof}[Proof of Theorem~\ref{thm:comp_poisson}.]
We start by noting that Lemma~\ref{lemm:grad_SC} implies that the explicit Euler iterate 
$\theta^{k+1} = \theta^k-\delta \tau \nabla\mathcal{L}^n[\theta^{k}]$ 
can be computed in polynomial-time, assuming that the initial guess $\theta_0\in \Theta$ and the time-step $\delta\tau$ are computable in polynomial-time. 

Next, we show that the functional $\theta \mapsto \mathcal{L}^n[\hat{u}_\theta]$ is $\lambda$-convex. Using that 
$$
\|Q_n\|_{H^{1/2}(\partial\Omega)}^2\leq n\|Q_n\|_{L^2(\partial\Omega)}^2,\textrm{ for all } Q_n\in \Pi_{n}(\Omega),
$$
as shown in \cite{Diethelm2002-kr} and $\|u\|_{H^{-1}(\Omega)}^2\leq \|u\|_{L^2(\Omega)}^2$ by definition of the negative order norm, we obtain 
\begin{align*}
\|\Delta \hat{u}_\theta\|_{L^2(\Omega)}^2 +n \| \hat{u}_\theta|_{\partial\Omega}\|_{L^2(\partial\Omega)}^2\geq \|\Delta  \hat{u}_\theta\|_{H^{-1}(\Omega)}^2 +\| \hat{u}_\theta|_{\partial\Omega}\|_{H^{1/2}(\partial\Omega)}^2,
\end{align*}
for all $\hat{u}_\theta\in \Pi_n(\Omega)$. Therefore, by Theorem~\ref{thm:prop_poiss_loss}, we have that 
\begin{equation}
\begin{aligned}
        \langle\nabla\mathcal{L}^n_\theta[\theta_2]-\nabla\mathcal{L}^n_\theta[\theta_1],\theta_2-\theta_1 \rangle_{2} &= \|\Delta (\hat{u}_{\theta_2}-\hat{u}_{\theta_1})\|_{L^2(\Omega)}^2 +n \|\hat{u}_{\theta_2}|_{\partial\Omega}-\hat{u}_{\theta_1}|_{\partial\Omega}\|_{L^2(\partial\Omega)}^2\\&\geq \|\Delta (\hat{u}_{\theta_2}-\hat{u}_{\theta_1})\|_{H^{-1}(\Omega)}^2 +\|\hat{u}_{\theta_2}|_{\partial\Omega}-\hat{u}_{\theta_1}|_{\partial\Omega}\|_{H^{1/2}(\partial\Omega)}^2\\&
        \gtrsim \|\hat{u}_{\theta_2}-\hat{u}_{\theta_1}\|_{H^1(\Omega)}^2.
    \end{aligned}
\end{equation}
The proof of the $\lambda$-convexity follows by noting that the map $\theta\mapsto \|\hat{u}_
\theta\|_{H^1(\Omega)}^2$ is coercive for all $\hat{u}_\theta\in \Pi_{n}(\Omega)$. Indeed, let $\mathbb{T}_\Omega:=\left(T_i(P_n(\Omega))\right)_{i\in A_{n,d}}$, be the Vandermonde matrix of the Lagrange basis of degree $n\in \mathbb{N}$ in the Legendre grid of degree $n\in \mathbb{N}$. Then the matrix $\mathbb{T}_\Omega^T\mathbb{W}\mathbb{T}_\Omega\in \mathbb{R}^{|A_{n,d}|\times |A_{n,d}|}$ is positive definite, since the column vectors $T_i(P_{n}(\Omega))\in \mathbb{R}^{|A_{n,d}|}$ are linearly independent. Therefore 
\begin{equation}
     \|\hat{u}_
\theta\|_{H^1(\Omega)}^2\geq  \|\hat{u}_
\theta\|_{L^2(\Omega)}^2 \gtrsim \sigma\|\theta\|_1^2,
\end{equation}
where $\sigma\in \mathbb{R}_{>0}$ is the smallest eigenvalue of $\mathbb{T}_\Omega^T\mathbb{W}\mathbb{T}_\Omega\in \mathbb{R}^{|A_{n,d}|\times |A_{n,d}|}$. Consequently,

\begin{equation}
    \langle\nabla\mathcal{L}^n_\theta[\theta_2]-\nabla\mathcal{L}^n_\theta[\theta_1],\theta_2-\theta_1 \rangle_{L^2(\Omega)}\gtrsim \sigma \|\theta_2-\theta_1\|_1^2,
\end{equation}
as claimed. 
Next, we show that the gradient $\nabla\mathcal{L}^n:\Pi_n(\Omega)\rightarrow \Pi_n(\Omega)$ is a Lipschitz continuous map. Let $\hat{u}_{\theta_1},\hat{u}_{\theta_2}\in \Pi_{n}(\Omega)$, then by definition of the gradient and the Sobolev cubatures, we obtain 
\begin{equation}
\begin{aligned}
        \|\nabla\mathcal{L}^n_\theta[\theta_1]-\nabla\mathcal{L}^n_\theta[\theta_2]\|_1&= \biggr\|\int\limits_{\Omega}(\Delta^*\Delta \hat{u}_{\theta_1}\nabla_{\theta}\hat{u}_{\theta_1}-\Delta^*\Delta\hat{u}_{\theta_2}\nabla_{\theta}\hat{u}_{\theta_2})dx\\&\,\,\,\,\,\,\,\,\,+n\int\limits_{\partial\Omega}\hat{u}_{\theta_1}\nabla_{\theta}\hat{u}_{\theta_1}-\hat{u}_{\theta_1}\nabla_{\theta}\hat{u}_{\theta_2}ds\biggr\|_1
        \\&
       =  \|(\mathbb{T}_{\Omega}^T\mathbb{D}_\Delta^T\mathbb{W}\mathbb{D}_\Delta\mathbb{T}_{\Omega}+ n\mathbb{T}_{\partial\Omega}^T\mathbb{W}\mathbb{T}_{\partial\Omega})(\theta_2-\theta_1)\|_1\\&
       \leq L\|\theta_2-\theta_1\|_1,
\end{aligned}
\end{equation}
where $\mathbb{D}_\Delta:=\sum\limits_{i=1}^d\mathbb{D}_{x_i}^2$, and $L:= \|\mathbb{T}_{\Omega}\mathbb{D}_\Delta^T\mathbb{W}\mathbb{D}_\Delta\mathbb{T}_{\Omega}+n\mathbb{T}_{\partial\Omega}^T\mathbb{W}\mathbb{T}_{\partial\Omega}\|_\infty$.

Let $\theta^j\in \Theta$ be the $j$-th iterate of the explicit Euler gradient flow with time-step $\delta\tau\in \mathbb{R}_{>0}$. Since $\mathcal{L}^n$ is a $\lambda$-convex and $L$-Lipschitz smooth functional, then by Theorem~\ref{thm:erro_dec} we obtain
\begin{equation}
    \|\hat{u}_{\theta^j}-u^*\|_{H^1(\Omega)}^2\leq \epsilon_\mathrm{app}(n)+\epsilon_\mathrm{int}(n)+\epsilon_\mathrm{opt}(j),
\end{equation}
with $\epsilon_\mathrm{opt}(j) = (1-\sigma/L)^{j}$.

To finish the proof, we use that the Laplace operator is a hypo-elliptic operator \cite{Mitrea2013-cl}, hence, the solution $u^*\in H^1(\Omega)$ inherits the regularity of the domain, boundary condition and right hand-side. Therefore, as a result of Corollary~\ref{cor:complexity}, we can characterize the complexity of the solution as follows:
\begin{itemize}
    \item If $u^*\in AC^{k+1}(\Omega)\cap BV^{k+2}(\Omega)$, $f\in  AC^{k-1}(\Omega)\cap BV^{k}(\Omega)$ and $g\in  AC^{k-3/2}(\partial\Omega)\cap BV^{k-1/2}(\partial\Omega)$ for some $1<k\in \mathbb{N}$, then the solution is $H^1$-computable in at least super polynomial-time. 
    \item Moreover, if $f$ and $g$ are $\rho-$analytic functions, the solution $u^*\in H^1(\Omega)$ is $H^1$-computable in polynomial-time. 
\end{itemize}
\end{proof}
\subsection{Complexity of Solutions to the Eikonal Equation}
This subsection presents the Eikonal equation \cite{Alimov2019-np} as a representative example of a PDE whose solution is computable but exhibits a loss of regularity, resulting in a complexity blowup. We begin by formulating the corresponding variational problem.
\begin{definition}\label{Def:Eikonal_FP}
    Let $S\subseteq \Omega$ be a smooth manifold of co-dimension one, then the \emph{Eikonal forward loss} $\mathcal{L}:H^1(\Omega)\rightarrow\mathbb{R}_+$ is defined as
    \begin{equation}\label{Eq:Eikonal_FP}
        \mathcal{L}[u]:= \bigr\|\|\nabla u\|_1-1\bigr\|_{L^2(\Omega)}^2+\|u|_{S}\|_{H^{1/2}(S)}^2.
    \end{equation}
\end{definition}
Next, we present some of the analytical properties of the Eikonal loss. 
\begin{theorem}\label{thm:prop_eik_loss}
    The Eikonal loss $\mathcal{L}:H^1(\Omega)\rightarrow\mathbb{R}_+$ from \eqref{Eq:Eikonal_FP} is a continuous functional,
    satisfying the apriori coercivity estimate 
    \begin{equation}\label{eq:apr_est_eik}
        \|u\|_{H^1(\Omega)}\lesssim \mathcal{L}[u]+1.
    \end{equation}
    Moreover, the Signed Distance Function (SDF)~\cite{Alimov2019-np} $\phi\in H^1(\Omega)$ defined as
    \begin{equation}
        \phi(x):=\min\limits_{x_s\in S}||x-x_s||_1
    \end{equation}
    is a minimizer of the Eikonal loss. 
\end{theorem}
\begin{proof}
The proof of Theorem~\ref{thm:prop_eik_loss} is included in Appendix~\ref{Appendix_C}.   
\end{proof}
The following lemma provides a characterization of the regularity of one of the solutions of the Eikonal loss from Definition~\ref{Def:Eikonal_FP}.
\begin{lemma}\label{lemm:disc_sol}
    Let $S\subseteq \Omega:=(-1,1)^d$, $d\in \mathbb{N}$, be a smooth manifold of co-dimension one, and let $\phi_S:\Omega\rightarrow \mathbb{R}$ be the signed distance function of $S$, given by 
    \begin{equation}\label{Eq:SDF}
    \phi_S(x):=\min\limits_{x_s\in S}\|x-x_s\|_1.
    \end{equation}
    Furthermore, let $S^+:=\{\phi_S\geq 0\}$ and $M_S\subseteq S^+$ be the medial axis of $S$, defined as
    \begin{equation}
        M_S:=\{c\in S^+: \exists x_l,x_r\in S,\ x_l\neq x_r,\ \|c-x_l\|_1 = \|c-x_r\|_1\}.
    \end{equation}
    Then, the function $\phi_S$ is non-differentiable on $M_S$. More precisely, for every $c\in M_S$, there exist sequences $z_l^\delta,z_r^\delta\in S^+$ such that $z_l^\delta\to c$, $z_r^\delta\to c$, $\phi_S$ is differentiable at $z_l^\delta$ and $z_r^\delta$, and 
    \begin{equation}
        \nabla\phi_S(z_l^\delta)\to p_l \neq p_r \leftarrow \nabla\phi_S(z_r^\delta).
    \end{equation}
\end{lemma}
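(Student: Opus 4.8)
The plan is to reduce the claim to elementary facts about nearest points, exploiting that along the segment from a foot‑point to an equidistant point a distance function must grow at the maximal rate permitted by its Lipschitz constant. Fix $c\in M_S$; by definition of the medial axis there are distinct $x_l\neq x_r\in S$ with
\[
\|c-x_l\|_1=\|c-x_r\|_1=\phi_S(c)=:r=\operatorname{dist}_{\ell^1}(c,S).
\]
Since $S$ is a closed (hence compact) smooth curve and $x_l\neq x_r$, I would first pick disjoint relatively open sub‑arcs $S_l\ni x_l$, $S_r\ni x_r$ and a radius $\varepsilon>0$ so small that on $B_\varepsilon(c)$ every foot‑point of $\phi_S$ lies in $S_l\cup S_r$; writing $d_l:=\operatorname{dist}_{\ell^1}(\,\cdot\,,S_l)$, $d_r:=\operatorname{dist}_{\ell^1}(\,\cdot\,,S_r)$, this yields $\phi_S=\min\{d_l,d_r\}$ on $B_\varepsilon(c)$ with $d_l(c)=d_r(c)=r$. (A higher‑order medial point is handled by replacing the pair with the finite family of active arcs; the argument is unchanged.) This localization is routine, using only compactness of $S$, the $1$‑Lipschitz property of the $d_\bullet$, and that the other arcs of $S$ are strictly farther from $c$.

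Next I would identify $\nabla\phi_S$ near $c$. Being a distance function, $\phi_S$ is $1$‑Lipschitz for $\|\cdot\|_1$, so $\|\nabla\phi_S\|_\infty\le 1$ wherever it exists. Along $x(t):=x_l+t(c-x_l)$ the bounds $\phi_S(x(t))\le\|x(t)-x_l\|_1=tr$ and $\phi_S(x(t))\ge\phi_S(c)-\|c-x(t)\|_1=tr$ force $\phi_S(x(t))=tr$, hence $\phi_S$ is affine on this segment; differentiating and invoking the equality case of H\"older's inequality for $(\ell^\infty,\ell^1)$ against $\nabla\phi_S\cdot(c-x_l)=r=\|c-x_l\|_1$ pins down
\[
\nabla\phi_S=v_l:=\bigl(\operatorname{sgn}(c_1-(x_l)_1),\ \operatorname{sgn}(c_2-(x_l)_2)\bigr)
\]
on the open segment $(x_l,c)$ whenever $c-x_l$ has no vanishing coordinate (the coordinate‑aligned case being treated separately by a transverse one‑sided perturbation). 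More generally, one checks $d_l(x)=r+v_l\cdot(x-c)$ on $B_\varepsilon(c)$, because the maximiser of $p\mapsto v_l\cdot p$ over the small arc $S_l$ is the tangency point $x_l$, independently of $x$; likewise $d_r(x)=r+v_r\cdot(x-c)$, so $\phi_S(x)=r+\min\{v_l\cdot(x-c),\,v_r\cdot(x-c)\}$ near $c$. Choosing $z_l^\delta\in(x_l,c)\subseteq S^+$ and $z_r^\delta\in(x_r,c)\subseteq S^+$ at $\ell^1$‑distance $\delta$ from $c$, we get $z_\bullet^\delta\to c$ with $\nabla\phi_S(z_l^\delta)\equiv v_l$ and $\nabla\phi_S(z_r^\delta)\equiv v_r$, so the one‑sided limits are $\nabla\phi_S^l(c)=v_l$ and $\nabla\phi_S^r(c)=v_r$.

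It then remains to show $v_l\neq v_r$: once this holds, $h\mapsto\min\{v_l\cdot h,v_r\cdot h\}$ is a genuine corner at the origin, so $\phi_S$ is not differentiable at $c$, and the two sequences $z_l^\delta,z_r^\delta$ realise the claimed distinct gradient limits. Here $v_\bullet$ encodes the facet of the $\ell^1$‑sphere $\partial B_{\ell^1}(c,r)$ carrying $x_\bullet-c$, and $S$ is tangent there to that facet since $S$ avoids the open diamond $B_{\ell^1}(c,r)$. For the Euclidean distance the separation is automatic, as $v_\bullet=(c-x_\bullet)/\|c-x_\bullet\|_2$ and $v_l=v_r$ together with $\|c-x_l\|_2=\|c-x_r\|_2$ forces $x_l=x_r$. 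For the polyhedral $\ell^1$‑norm one must instead exclude the degenerate case that $x_l$ and $x_r$ lie on a common facet, i.e.\ that $S$ is tangent to a single line of slope $\pm1$ at two distinct points while remaining on one side of it; under a general‑position hypothesis on $S$ (whose failure set has codimension one) this does not occur, and then $v_l\neq v_r$.

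The main obstacle is precisely this separation step in the non‑strictly‑convex case: at the exceptional configurations above one computes that $\phi_S$ is in fact affine near $c$, so without a transversality assumption on the tangencies of $S$ to the coordinate‑diagonal directions the universal claim over $c\in M_S$ is delicate; the cleanest route is to impose such genericity (or to read $\phi_S$ as the Euclidean signed distance, where it is unnecessary). Everything else — the local $\min$‑decomposition, the affinity of $\phi_S$ along the radial segments, and the identification of $v_l,v_r$ with sign patterns — is elementary and follows from compactness of $S$ together with the equality case of H\"older's inequality.
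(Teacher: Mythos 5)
You take a more elaborate route than the paper: you first localize $\phi_S = \min\{d_l, d_r\}$ near $c$, establish affinity along the radial segments $[x_l,c]$ and $[x_r,c]$, and pin down $\nabla\phi_S$ on those segments via the equality case of H\"older for the $(\ell^\infty,\ell^1)$ pairing, correctly obtaining the sign vectors $v_l, v_r$. The paper proceeds more directly: it only shows that $x_l$ (resp.\ $x_r$) stays the nearest point to $z_l^\delta := c - \delta(c-x_l)$ and then writes the gradient as $\nabla\phi_S(z_l^\delta) = (z_l - x_l)/\|z_l - x_l\|_1$. That formula is the Euclidean gradient numerator with an $\ell^1$ normalization; it is \emph{not} the gradient of $x \mapsto \|x - x_l\|_1$, which (where it exists) is the componentwise sign vector, exactly as you derived. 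So your gradient identification is the correct one for the paper's stated $\ell^1$ metric, and your local $\min$-decomposition, while not strictly needed for the segment argument, is sound.

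The gap you flag in the separation step is genuine, and it in fact exposes a flaw in the paper's proof. With the correct $\ell^1$ gradient, ``$x_l \neq x_r$'' does not force ``$v_l \neq v_r$'': two distinct foot-points lying on a common facet of the $\ell^1$-sphere around $c$ (e.g.\ $c=(0,0)$, $x_l=(1,1)$, $x_r=(\tfrac12,\tfrac32)$, both at $\ell^1$-distance $2$) give identical sign vectors, and then $\phi_S$ is locally affine and \emph{differentiable} at $c$ — contradicting the universal claim over $M_S$ as stated. The paper's final line, ``the claim follows from the assumption that $x_l\neq x_r$,'' tacitly leans on the Euclidean-style quotient formula, which is valid for $\|\cdot\|_2$ (where equal-norm distinct vectors have distinct normalizations) but not for $\|\cdot\|_1$. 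As you say, the fix is either a transversality/genericity hypothesis excluding co-facet tangencies of $S$, or replacing the $\ell^1$ distance by the Euclidean one throughout. You have found a real gap, not a cosmetic one.
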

\vspace{-1em}\begin{proof}
 This is a well-known result in the signed distance function literature \cite{Alimov2019-np}, and we include a proof in Appendix~\ref{Appendix_C}. 
\end{proof}
\begin{theorem}\label{thm:compl_blowup}
    Let $S\subseteq \Omega$ be a closed smooth manifold of co-dimension one, and $\Psi:U\rightarrow \Omega$ a parametrization of the manifold, for some open set $U\subseteq \mathbb{R}^d$, and $\phi\in H^1(\Omega)$ be the SDF from ~\eqref{Eq:SDF}. If the parametrization $\psi$ satisfies additionally $\psi\in AC^{k-1}(\Omega)\cap BV^k(\Omega)$ for some $k\geq 1$, then  the SDF solution $\phi$ is $L^2$-computable. 
    
Moreover, for $\Omega:=(-1,1)$, if the parametrization 
$\Psi : U \rightarrow \Omega$ is $H^1$–computable in polynomial-time, then the 
corresponding SDF $\phi \in H^{1}(\Omega)$ solving the Eikonal equation necessarily exhibits a complexity blow-up.
\end{theorem}
\vspace{-1em}\begin{proof}[Proof of Theorem~\ref{thm:compl_blowup}.]
   Following Theorem~\ref{thm:prop_eik_loss}, we know that the SDF $\phi\in H^1(\Omega)$ is a minimizer of the Eikonal loss in~\eqref{Eq:Eikonal_FP}. Moreover, using the regularity assumptions $\Psi\in AC^{k-1}(\Omega)\cap BV^k(\Omega)$ for $k\geq 1$, and $\phi\in AC(\Omega)\cap BV^1(\Omega)$,  it follows from Lemma~\ref{lemm:comp_initial_data} that the SDF $\phi$ is $C^0$-computable, and consequently $L^2$-computable.

   To prove the complexity blowup of the solution, we assume that \( S \subseteq \Omega \) is given by a polynomial-time, \( H^1 \)-computable parametrization \( \Psi: U \rightarrow \Omega \), for some open set \( U \subseteq \mathbb{R}^d \). By Lemma~\ref{lemm:disc_sol}, there exists a subset \( M_s \subseteq \{\phi\geq 0\} \) (the medial axis of the manifold $S$) where the SDF \( \phi \) is not differentiable, and therefore the function $\phi$ is not \( \rho \)-analytic. Hence, combining the Sobolev embedding $H^1(\Omega)\hookrightarrow C^0(\Omega)$ and  Corollary~\ref{cor:complexity}, it follows that the solution \( \phi \) cannot be \( H^1 \)-computable in polynomial-time, establishing the claim. 

   Alternatively, if the parametrization $\Psi$, is $C^0$-computable in polynomial-time, then $\phi$ is $C^0$-computable in super-polynomial-time, yielding a complexity blowup in $C^0$ as well.
\end{proof}
\section{Conclusion}
In this paper, we introduced a variational framework for analyzing the computational complexity of partial differential equations (PDEs) without requiring closed-form solutions, thereby generalizing results from previous works~\cite{bacho2023complexityblowupsolutionslaplace,Pour-El1989-yg}. Specifically, the variational framework allowed us to estimate the complexity of a discrete gradient flow approximating the solution operator of the variational problem, and used the convergence rates of these discretizations to quantify the complexity of the infinite-dimensional PDE solutions. This approach enabled us to relate structural properties of PDE operators and their associated variational losses to the computational complexity of the corresponding solutions within the adopted computability and representation framework from Definition~\ref{def:comp_bsp}. Moreover, we introduced a method for proving complexity blowup in the PDE solution, depending on the regularity-preserving properties of the solution operators. Specifically, if the solution loses regularity relative to analytic boundary and/or initial data, then the solution exhibits a complexity blowup. This result links the regularity and approximation properties of PDE solutions to complexity theory via effective polynomial representations.

In particular, Theorem~\ref{thm:compt} establishes $H^k$-computability for the solutions of a class of (possibly nonlinear) PDEs, under the assumptions that the associated variational loss is coercive, convex, satisfies the quadratic growth condition, and is Lipschitz smooth. Furthermore, Corollary~\ref{cor:complexity} provides a quantitative characterization of the solution complexity within this framework, identifying sufficient conditions under which the solution is computable in polynomial-time, and conditions under which it exhibits complexity blowup. We illustrate these results with two examples: (i) the Poisson equation, whose solution—assuming analytic boundary data—is $H^1$-computable in polynomial-time; and (ii) the Eikonal equation, which, despite being $L^2$-computable under analytic boundary conditions, exhibits complexity blowup due to loss of regularity in the solution.

To summarize, we have developed a framework that builds a rigorous bridge between computability, complexity theory, and variational formulations of PDEs. By providing upper and lower bounds on solution complexity within the chosen computability and discretization setting. This framework, delineates regimes in which PDEs admit effective numerical approximation schemes and those in which complexity blowup occurs. Moreover, it establishes a direct connection between structural properties of PDE forward losses—such as convexity and coercivity—and the regularity of the solution operator with the resulting complexity of the infinite-dimensional solution. Thus, the framework strengthens the theoretical foundations of PDE computability and complexity in this representation-dependent context and provides constructive insights for the design of efficient numerical schemes.

The main limitation of our framework is that the results depend on the chosen effective polynomial representation of computable functions and on the discrete gradient-flow iterates used to estimate the complexity of the PDE solution. Consequently, the statements derived in this work provide sufficient conditions for computability and complexity within a representation- and discretization-dependent setting, rather than universal, model-independent complexity-theoretic barriers for PDEs. Moreover, the error decomposition in Theorem~\ref{thm:erro_dec} yields only upper bounds on the solution’s complexity. Within the present framework, complexity blowup can therefore be established only by identifying a loss of regularity of the solution $u^* \in H^k(\Omega)$, and for $k>\frac{d}{2}$

Further research is required to strengthen the lower-bound analysis, potentially extending it to higher dimensions and incorporating lower bounds on the convergence behavior of the gradient flow itself. Such developments could provide alternative characterizations of complexity blowup beyond the current representation-dependent framework. In addition, extending the approach toward algorithm-independent complexity lower-bounds—by exploiting the intrinsic continuous structure of the gradient flow—may lead to stronger and more general discretization-independent complexity characterizations of PDE solutions applicable to broader classes of computational methods.

\appendix
\section{Appendix}\label{appendix_A}
In this section, we present the characterization of the computability structure in a Banach space, the proof of Theorem~\ref{thm:compt_structures} and of Proposition~\ref{prop:comp_cub}. 

We start with the definition of Banach spaces with computable structures.
\begin{definition}\label{def:comp_str}
A Banach space $X$ is said to have a \emph{computability structure}, if the following axioms are fulfilled:
\begin{enumerate}
    \item Let $(x_n)_{n\in \mathbb{N}},(y_n)_{n\in \mathbb{N}}\subseteq X$ be computable sequences in $X$, $(\alpha_{n,k})_{n,k\in \mathbb{N}}, (\beta_{n,k})_{n,k\in \mathbb{N}}\subseteq\mathbb{R}$ be computable double sequences in $\mathbb{R}$, and $d:\mathbb{N}\rightarrow \mathbb{N}$ be a recursive function. Then, the sequence
    \begin{equation}
        S_n:=\sum\limits_{k=1}^{d(n)}(\alpha_{n,k}x_n+\beta_{n,k}y_n),
    \end{equation}
    is computable in $X$.
    \item Let $(x_{n,k})_{n,k\in \mathbb{N}}\subseteq X$ be a computable double sequence in $X$ such that $x_{n,k}\stackrel{k\rightarrow\infty}{\longrightarrow}x_n$ effectively in $k$ and $n$. Then $(x_n)_{n\in \mathbb{N}}$ is computable in $X$. 
    \item If $(x_n)_{n\in \mathbb{N}}$ is a computable sequence in $X$, then $(\|x_n\|_X)_{n\in \mathbb{N}}$ is a computable sequence. 
\end{enumerate}
\end{definition}
Next, we show Theorem~\ref{thm:compt_structures}.
\vspace{-0.0em}\begin{proof}[Proof of Theorem~\ref{thm:compt_structures}.]
    We start by proving the first axiom. Let $(x_n)_{n\in \mathbb{N}},(y_n)_{n\in \mathbb{N}}\subseteq X$, be computable sequences in $X$, $(\alpha_{n,k})_{n,k\in \mathbb{N}}, (\beta_{n,k})_{n,k\in \mathbb{N}}\subseteq\mathbb{R}$ be computable double sequences, and $d:\mathbb{N}\rightarrow \mathbb{N}$ be a recursive function. Then, using the density assumption for $X$, there exist $C^0$-computable double sequences $(f_{n,k})_{n,k\in \mathbb{N}},(q_{n,k})_{n,k\in \mathbb{N}}$ converging effectively to $(x_n)_{n\in \mathbb{N}},(y_n)_{n\in \mathbb{N}}$, i.e., there exist recursive functions $e_x:\mathbb{N}\times\mathbb{N}\rightarrow\mathbb{N}$, $e_y:\mathbb{N}\times \mathbb{N}\rightarrow\mathbb{N}$ such that
    \begin{align}
        &\|f_{n,k}-x_n\|_{X}\leq 2^{-N}\\&
        \|q_{n,k}-y_n\|_{X}\leq 2^{-N},
    \end{align}
    for all $n\in \mathbb{N}$, with $k\geq e_x(n,N)$ for the first estimate and $k\geq e_y(n,N)$ for the second. Due to the vector structure of the Banach spaces we know that $\alpha_{n,k}x_n + \beta_{n,k}y_n\in X$. Moreover, the double sequence 
    \begin{equation}
        \tilde{S}_{n,k}:=\sum \limits_{l=1}^{d(n)}(\alpha_{n,l}f_{n,k}+\beta_{n,l}q_{n,k}),
    \end{equation}
    is $C^0$-computable, as it is a computable amount of sums and products of computable functions. To conclude, we use the triangle inequality property of the norm in $X$, and the previous observations, yielding for all $k\geq e_x(n,N)$,
    \begin{align*}
    \|S_n-\tilde{S}_{n,k}\|_{X}&\leq \sum\limits_{l=1}^{d(n)}\alpha_{n,l}\|x_n-f_{n,k}\|_X+\beta_{n,k}\|y_n-q_{n,k}\|_X\\&
    \leq C_{\alpha,\beta}(n) 2^{-N},
    \end{align*}
    with $C_{\alpha,\beta}(n):=2d(n)\max\limits_{l\in\{1,\dots,d(n)\} }\{\alpha_{n,l},\beta_{n,l}\}$, yielding the claim. 

    For the second axiom, let $(f_{n,k,l})_{n,k,l\in \mathbb{N}}$ be a $C^0$-computable triple sequence converging effectively to $(x_{n,k})_{n,k\in \mathbb{N}}$ as $l\rightarrow \infty$, by definition of $X$-computability. Then applying the triangle inequality, the following holds
    \begin{equation}
    \|f_{n,k.l}-x_{n}\|_X\leq \|f_{n,k,l}-x_{n,k}\|_X + \|x_{n,k}-x_n\|_X.
    \end{equation}
    Hence, taking the limit
    $l\rightarrow\infty$ we get that there exists a recursive $e:\mathbb{N}\times \mathbb{N}\rightarrow\mathbb{N}$ such that for all $k\geq e(n,N)$, the following holds
    \begin{equation}
    \|f_{n,k}-x_n\|_X\lesssim 2^{-N},
    \end{equation}
    for all $n\in \mathbb{N}$, where $f_{n,k}:=\lim\limits_{l\rightarrow \infty}f_{n,k,l}$. Therefore, the sequence $(x_n)_{n\in \mathbb{N}}\subseteq X$ is $X$-computable. 
    
    To prove the last axiom, we use that there exists a $C^0$-computable double sequence $(q_{n,j})_{n,j\in \mathbb{N}}$ and a recursive function $e:\mathbb{N}\times \mathbb{N}\rightarrow\mathbb{N}$ such that 
    \begin{equation}
        \|q_{n,j}-x_n\|_X\leq 2^{-N},
    \end{equation}
    for all $j\geq e(n,N)$. Hence, applying the reverse triangle inequality we have that 
    \begin{equation}
        |\|q_{n,j}\|_X-\|x_n\|_{X}|\leq \|q_{n,j}-x_n\|_X\leq 2^{-N}.
    \end{equation}
    Consequently, we obtained a $C^0$-computable double sequence $f_{n,k}:=\|q_{n,j}\|_X$ converging effectively to $\|x_n\|_X$. 
\end{proof}
We follow to proof Proposition~\ref{prop:comp_cub}, but before we introduce the following lemma. 
\begin{lemma}\label{comp_Leg_grid}
    The Legendre grid $P_n(\Omega)\subseteq \Omega$ given as the roots of the Legendre polynomials can be computed in polynomial-time for all $n\in \mathbb{N}$. 
\end{lemma}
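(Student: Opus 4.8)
The plan is to reduce the statement to a one-dimensional bit-complexity claim and then bound the cost of a bisection scheme. Since $\Omega=(-1,1)^d$ and, by Definition~\ref{def:Lag_p}, $P_n(\Omega)$ is the tensor grid $\{(p_{\alpha_1},\dots,p_{\alpha_d}):\alpha\in A_{n,d}\}$ assembled from the one-dimensional Legendre nodes, it suffices to prove that for each $m\in\mathbb N$ the $m$ zeros $p_1<\dots<p_m$ of the Legendre polynomial $P_m$ can be approximated to accuracy $2^{-N}$ by dyadic rationals in a number of operations polynomial in $m$ and $N$; forming the $d$-fold product and the (here trivial) affine rescaling to $\Omega$ is then a polynomial-time post-processing step. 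I would first recall the classical facts that the $p_i$ are \emph{simple} zeros lying in $(-1,1)$, so that $\mathrm{sign}(P_m)$ changes at each of them.

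Next I would record that $\mathrm{sign}(P_m(t))$ can be computed \emph{exactly} at any dyadic $t$ in polynomial time. Evaluating the three-term recurrence
\begin{equation}
  (k+1)P_{k+1}(x)=(2k+1)\,x\,P_k(x)-k\,P_{k-1}(x),\qquad P_0\equiv1,\quad P_1(x)=x,
\end{equation}
at $x=t$ involves only rational arithmetic, and an easy induction shows that $P_k(t)$ is a rational whose numerator and denominator have bit-length $\mathrm{poly}(k,\mathrm{bits}(t))$; hence $P_m(t)$, and in particular its sign, is obtained in $\mathrm{poly}(m,\mathrm{bits}(t))$ time. The second ingredient is root isolation: invoking the classical estimate (going back to Szeg\H{o}) that two distinct zeros of $P_m$ are separated by at least $c\,m^{-2}$ for an absolute constant $c>0$, one evaluates $\mathrm{sign}(P_m)$ on a uniform dyadic grid of spacing $\le c\,m^{-2}/2$, i.e.\ on $O(m^2)$ points of $[-1,1]$; each of the resulting $m$ sign changes brackets exactly one zero. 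If one prefers not to track $c$, one may simply try grid spacings $m^{-2},m^{-3},\dots$ until exactly $m$ sign changes appear, a loop that terminates after $\mathrm{poly}(m)$ work by the same estimate.

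To conclude, on each of the $m$ bracketing intervals I would run ordinary bisection: $N+O(\log m)$ steps, each requiring one exact sign evaluation at a dyadic point with $O(N+\log m)$ bits, yield a dyadic approximation of the enclosed root within $2^{-N}$. Summing over the $m$ roots gives total cost $m\cdot(N+O(\log m))\cdot\mathrm{poly}(m,N)=\mathrm{poly}(m,N)$, which exhibits the recursive, polynomially bounded modulus required by Definition~\ref{def:pol_comp}; taking tensor products and rescaling preserves this bound, proving the lemma. (A Newton iteration started from the asymptotic guesses $\cos\!\big(\tfrac{(4k-1)\pi}{4m+2}\big)$ would need only $O(\log N)$ steps, but requires an a priori basin-of-convergence estimate; bisection already suffices.)

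The only non-routine ingredient is the zero-separation estimate for $P_m$, which guarantees that polynomially many sign evaluations localize all roots; everything else is bit-length bookkeeping, so I expect this localization step to be the main obstacle in a fully detailed write-up. It can be sidestepped entirely by working with the Golub--Welsch (Jacobi) tridiagonal matrix $J_m$ with vanishing diagonal and off-diagonal entries $k/\sqrt{4k^2-1}$, whose eigenvalues are precisely the $p_i$: its Sturm sequence of leading principal minors obeys $d_k(t)=t\,d_{k-1}(t)-\tfrac{(k-1)^2}{4(k-1)^2-1}\,d_{k-2}(t)$ with purely rational coefficients, so a single bisection on $[-1,1]$ driven by the eigenvalue-counting function (number of sign changes in $d_0(t),\dots,d_m(t)$) computes the $k$-th node directly in $\mathrm{poly}(m,N)$ time, with simplicity of the zeros — hence correctness of the count — guaranteed by the positivity of the off-diagonal entries.
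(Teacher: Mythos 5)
Your proof is correct, but it takes a genuinely different route from the paper's. The paper's proof relies on the Golub--Welsch algorithm: it forms the tridiagonal Jacobi matrix $J_n$ whose eigenvalues are the Legendre nodes, observes that the recursion coefficients $\beta_i = i/\sqrt{(2i-1)(2i+1)}$ are polynomial-time computable, and then appeals to the literature for the $\mathcal{O}(n^2)$ (or better) cost of tridiagonal eigenvalue computation, citing also Newton-type refinements as a state-of-the-art alternative. Your main argument instead isolates the roots of $P_m$ directly by exact sign evaluation of the three-term recurrence at dyadic points, uses Szeg\H{o}'s $\Omega(m^{-2})$ zero-separation bound to justify a polynomial-size scanning grid, and then runs bisection with an explicit bit-length accounting. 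What your version buys is a genuinely self-contained argument in the bit-complexity (Turing machine) model: every intermediate quantity is an exact rational with polynomially bounded numerator and denominator, so the "polynomial time" claim is verified at the level of Definition~\ref{def:pol_comp} rather than delegated to a floating-point operation count, which is a point the paper's sketch does not fully resolve (an $\mathcal{O}(n^2)$ flop count for the QR/QL eigensolver does not automatically track the working precision needed to certify $2^{-N}$ accuracy). The paper's version buys brevity and contact with the numerically preferred algorithms. Your closing remark -- driving bisection by the Sturm sequence of leading principal minors of $J_m$, with the sign-change count as an eigenvalue counter -- is in fact the rigorous bit-complexity realization of the paper's Golub--Welsch route, so it reconciles the two; either of your two formulations would serve as a complete proof. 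One small point worth making explicit in a full write-up: you should note that bisection returns the node to additive accuracy $2^{-N}$, which is the notion of approximation required, since the nodes themselves are generically irrational and so cannot be output exactly.
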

\vspace{-1em}\begin{proof}
One way to prove the polynomial computability of the Legendre grid and weights is to use the Golub–Welsch (GW) algorithm \cite{GW-Method}. For simplicity, consider the case $d=1$. Let $J_n \in \mathbb{R}^{(n+1)\times (n+1)}$ be the Jacobi matrix, defined as the tri-diagonal matrix with entries $(J_n)_{i,j} = 0$ for $j\not\in \{i+1,i\}$ for $i\in \{1,\dots,n\}$ and $(J_n)_{i-1,i} = \beta_{i+1}$,$(J_n)_{i+1,i} = \beta_{i}$ for $i\in \{2,\dots,n\}$, 
where $\beta_i:= \frac{i}{\sqrt{(2i-1)(2i+1)}}$. The Legendre nodes $P_n(\Omega)$ are then obtained as the eigenvalues of $J_n$. Since the coefficients $\beta_i$ are computable in polynomial-time, and the tri-diagonal structure allows eigenvalues of $J_n$ to be computed in $\mathcal{O}(n^2)$ operations \cite{Hale2013-zy}, this yields the desired complexity bound.

An alternative proof of polynomial computability uses more modern approaches, such as Newton–Raphson iterations with carefully chosen initializations \cite{Petras1999-kp}. These methods achieve improved complexity, requiring only $\mathcal{O}(n)$ \cite{Hale2013-zy} or even $\mathcal{O}(1)$ \cite{Bogaert2012-wf} operations, and currently represent the state of the art for computing Legendre quadratures.
\end{proof}
\vspace{-1em}\begin{proof}[Proof of Proposition~\ref{prop:comp_cub}]
We start by showing that the interpolation $f_n:=\mathcal{I}_n[f] \in \Pi_n(\Omega)$ in Lagrange basis from Remark~\ref{rk:interpolation} is an $H^k$-computable function.

First, observe that the Legendre polynomials can be expressed via the Rodriguez formula \cite{Arfken2013-ny} as derivatives of the polynomial $(x^2 - 1)^n$, i.e., the 1-D Legendre polynomial of degree $n\in \mathbb{N}$ is given by
$$
P_n(x) = \frac{1}{2^n n!} \frac{d^n}{dx^n} (x^2 - 1)^n.
$$
Since $(x^2 - 1)^n$ is analytic and $C^0$-computable, its derivatives are also $C^0$-computable as presented in Chapter 1 of~\cite{Pour-El1989-yg} and therefore $H^k$-computable. In higher dimensions, the Legendre polynomial $P_\alpha\in\Pi_n(\Omega)$ for $\alpha\in A_{n,d}$ is given as a finite product of 1-D basis elements. Consequently, the Legendre polynomials themselves are $C^0$-computable and therefore $H^k$-computable.

Moreover, the roots of these polynomials are computable \cite{Pour-El1989-yg}, Chapter 1, ensuring that the Legendre grid $P_{n}(\Omega) \subseteq \Omega$ is computable. By the $H^k$-computability of $f$, the finite sequence $f(P_{n}(\Omega)):=(f(p_\alpha))_{\alpha\in A_{n,d}} \subseteq \mathbb{R}$ is also computable.

The $H^k$ computability of the Lagrange interpolation then follows from the $C^0$-computability of the Lagrange basis polynomials, along with the fact that finite sums and products of computable reals remain computable. Moreover, since the interpolant $f_n \in H^k(\Omega)$ is analytic and $H^k$-computable, its derivatives and their integrals are computable. Hence the Sobolev norm
\begin{equation}
    \|f_n\|_{H^k(\Omega)}^2 = \sum\limits_{|\beta|\leq k}\int\limits_{\Omega}(\partial^\beta f_n)^2dx,
\end{equation}
is computable.

The $H^k$-computability in polynomial-time follows by applying Lemma~\ref{comp_Leg_grid}, implying that $f(P_{n}(\Omega))$ is computable in polynomial-time, if $f$ is  $H^k$-computable in polynomial-time, and that $\partial^\beta L_\alpha\in \Pi_{n}(\Omega)$ is $H^k$-computable in polynomial-time for all $\alpha\in A_{n,d}$.
\end{proof}
Next we show the relation between the polynomial-time computability from Definition~\ref{def:complexity_blowup}, against the polynomial-time computability in \cite{Steinberg2017}. We start by recalling Ko-Friedmann's theorem \cite{Steinberg2017}. 
\begin{theorem}[\cite{Steinberg2017}]\label{thm:comp_L2}
Let $\mathbb{D}$ be the set of dyadic numbers and $\omega$ the set of integers in unary. A function $f \colon [0,1] \to \mathbb{R}$ is polynomial-time computable if and only if both of the following conditions are satisfied:
\begin{itemize}
    \item There exists a polynomial-time computable function
    \[
        \varphi \colon \mathbb{D} \times \omega \to \mathbb{D}
    \]
    such that for any $x \in [0,1] \cap \mathbb{D}$,
    \[
        \lvert \varphi(x,1^n) - f(x) \rvert < 2^{-n},
    \]
    where $\mathbb{D}\subseteq \mathbb{R}$ are the Dyadic numbers. 
    \item The function $f$ admits a polynomial modulus of continuity, i.e., for all $x,y \in [0,1]\cap \mathbb{D}$ and all $n \in \omega$,
\[
|x-y|\le 2^{-\mu(n)} \;\Longrightarrow\; |f(x)-f(y)| < 2^{-n},
\]
and if $\mu(n) \neq 0$, then $\mu(n+1) > \mu(n)$, that is, $\mu$ is strictly
increasing whenever it is nonzero.
\end{itemize}
\end{theorem}
Following this characterization, we provide the following result.
\begin{theorem}\label{thm:equival_compt}
If $f:[0,1]\to\mathbb{R}$ is a Lipschitz function that is $C^0$-polynomial-time computable in the sense of Corollary~\ref{cor:complexity}, then $f$ is polynomial-time computable according to the Ko-Friedman characterization given in Definition~\ref{thm:comp_L2}.
\end{theorem}
\begin{proof}
Let $f:[0,1]\rightarrow\mathbb{R}$ be a Lipschitz continuous function, which is $C^0$-polynomial-time computable. Then, there exists a sequence $(q_n)_{n\in \mathbb{N}}\subseteq \Pi_\infty([0,1])$ of polynomial-time computable polynomials converging exponentially to $f$, i.e., 
    \begin{equation}
        ||q_n-f||_{C^0([0,1])}\leq \mathcal{R}_{\textrm{exp}}(n),
    \end{equation}
    for all $n\in \mathbb{N}$. This implies that 
    \begin{equation}
        |q_n(x)-f(x)|\leq \mathcal{R}_{\textrm{exp}}(n),
    \end{equation}
    for all $x\in [0,1]\cap \mathbb{D}$. Consequently, for $\phi(x,1^n):=q_n(x)$, the following holds 
    \begin{equation}
        |\phi(x,1^n)-f(x)|\leq 2^{-N}
    \end{equation}
    for all $x\in [0,1]\cap\mathbb{D}$ in $\mathcal{O}(\mathcal{C}_\textrm{pol}(N))$ operations, establishing the first condition. 
    
    Moreover, by Lipschitz continuity of the function, the following holds
    \begin{equation}
        |f(x)-f(y)|\leq L|x-y|.
    \end{equation}
    Therefore, if $|x-y|\leq 2^{-\mu(N)}$, with $\mu(N):=N+\lceil\log L\rceil$, it holds
    \begin{equation}
        |f(x)-f(y)|\leq L|x-y|\leq L2^{-\mu(N)} = 2^{-
        N},
    \end{equation}
    yielding the polynomial modulus of continuity for $f$, and thus proving the claim.  
\end{proof}
The following remark discusses the other direction of the implication presented in Theorem~\ref{thm:equival_compt}.
\begin{remark}\label{remark_iff}
If we relax the notion of $C^0$-polynomial-time computability from
Definition~\ref{def:pol_comp} by allowing a sequence of polynomial-time computable functions that are not necessarily polynomials, then every Ko–Friedman polynomial-time computable function is also $C^0$-polynomial-time computable.

The drawback of this relaxation is that the complexity blow-up characterization
of Remark~\ref{rk:complexity_blowup} no longer applies, since slow polynomial
approximability does not preclude polynomial-time computability via alternative
(non-polynomial) approximation schemes.
\end{remark}
\begin{proof}
We show the first part of Remark~\ref{remark_iff}.
     Let $f$ be a polynomial-time computable function as in Theorem~\ref{thm:comp_L2}, then there exists a polynomial-time computable function $\phi:[0,1]\cap\mathbb{D}$ such that 
    \begin{equation}\label{eq:ineq_pf_comp}
        |\phi(x,1^n)-f(x)|<2^{-n},
    \end{equation}
    for all $x\in [0,1]\cap\mathbb{D}$. Since  $[0,1]\cap\mathbb{D}$ is a bounded domain, and $\phi$ is a computable function, there exists a sequence $(q_{m,n})_{m\in \mathbb{N}}\subseteq \Pi_\infty([0,1]\cap\mathbb{D})$ such that 
    \eqref{eq:ineq_pf_comp} is equivalent to 
    \begin{equation}\label{eq:ineq_pf_comp_2}
        ||q_{m,n}-\phi(\cdot,1^n)||_{C^0([0,1]\cap{\mathbb{D}})}<2^{-m},
    \end{equation}
    for all $n\in \mathbb{N}$. By triangle inequality, we have that
    \begin{align*}
        ||q_{m,n}-f||_{C^0([0,1]\cap\mathbb{D})}&\leq ||q_{m,n}-\phi(\cdot,1^n)||_{C^0([0,1]\cap\mathbb{D})}+||\phi(\cdot,1^n)-f||_{C^0([0,1]\cap\mathbb{D})}\\&
        <2^{-m}+2^{-n}.
    \end{align*}
    Consequently, by continuity of the norm, the following holds
    \begin{equation}
        ||q_{n}-f||_{C^0([0,1]\cap\mathbb{D})}<2^{-n},
    \end{equation}
    where $(q_n)_{n\in \mathbb{N}}:=(q_{n,n})_{n\in \mathbb{N}}$ is the diagonal sub-sequence. Finally, since $\phi$ is computable in polynomial-time, every $q_n$ is computable in polynomial-time, yielding the claim.
\end{proof}
\section{Appendix}\label{appendix_B}
In this section, we include the proofs of Lemmas~\ref{lemm:lam_convlin}, and~\ref{lemm:comp_initial_data}. 
\vspace{-0em}\begin{proof}{Proof of Lemma~\ref{lemm:lam_convlin}}
      Let $A^*:\mathrm{dom}(A^*)\subseteq H(\Omega)\rightarrow H(\Omega)$ be the formal adjoint of $A$. Then the gradient of the loss, with respect to the $H$-inner product, is given by
    \begin{equation}
        \nabla\mathcal{L}[u] = 2A^*(Au-b).
    \end{equation}
    Hence, the following holds
    \begin{equation}\label{eq:pf_lin_sc}
            \begin{aligned}
        \langle \nabla\mathcal{L}[u]-\nabla\mathcal{L}[v],u-v \rangle_{H(\Omega)} &= 2\langle A^*A(u-v),u-v \rangle_{H(\Omega)}\\&
        = 2\| A(u-v)\|_{H(\Omega)}^2\geq 2\mu \|u-v\|_{H(\Omega)}^2,
    \end{aligned}
    \end{equation}
    where the second equality follows from the definition of the adjoint operator $A^*$ , and the final inequality uses the coercivity assumption. We conclude by noting that the inequality in Eq. \ref{eq:pf_lin_sc} provides an equivalent characterization of $2\mu$-convexity compared to the one given in Definition \ref{def:lam_conv}.
\end{proof}
Next, we present the proof of Lemma~\ref{lemm:comp_initial_data}.
\vspace{0em}\begin{proof}{Proof of Lemma~\ref{lemm:comp_initial_data}}
    To prove that $h$ is computable in polynomial-time, we consider the Sobolev cubature approximation of the reconstruction loss from Definition \ref{Def:rec_prob} given by
    \begin{equation}
        \mathcal{L}^n_r[\hat{u}_\theta]:= \|\hat{u}_\theta-\mathcal{I}_n[h]\|_{H^s(\mathcal{B})}^2 = \sum\limits_{|\beta|\leq s}\mathfrak{r}^T(\mathbb{D}^{\beta})^T\mathbb{W}\mathbb{D}^{\beta}\mathfrak{r} = \mathfrak{r}^T\mathbb{W}_s\mathfrak{r},
    \end{equation}
    with $\hat{u}_\theta\in \Pi_{n}(\mathcal{B})$ and $\mathfrak{r}:=(\hat{u}_\theta-h)(P_n(\mathcal{B}))$. We note that the map $\theta\mapsto\|\hat{u}_\theta\|_{H^s(\mathcal{B})}^2$ is $\sigma$-coercive where $\sigma>0$ is the smallest eigenvalue of the full-column rank matrix $\mathbb{V}^T\mathbb{W}_s\mathbb{V}\in \mathbb{R}^{|A_{n,d}|\times|A_{n,d}|}$, with $\mathbb{V}:=(T_i(P_n(\mathcal{B})))_{i\in A_{n,d}}$, the Vandermonde matrix over the Legendre grid.  Indeed, the following holds
    \begin{equation}
        \|\hat{u}_\theta\|_{H^s(\mathcal{B})}^2 = \hat{\mathfrak{u}}_\theta^T\mathbb{W}_s\hat{\mathfrak{u}}_\theta = \theta^T\mathbb{V}^T\mathbb{W}_s\mathbb{V}\theta\geq \sigma\|\theta\|_1^2.
    \end{equation}
Therefore, by Lemma~\ref{lemm:lam_convlin}, $\theta\mapsto\mathcal{L}^n_r[\hat{u}_\theta]$ is a $2\sigma$-convex functional. Moreover, by definition of the gradient $\nabla\mathcal{L}^n$, the following holds
\begin{equation}
    \|\nabla\mathcal{L}^n[\hat{u}_{\theta_1}]-\nabla\mathcal{L}^n[\hat{u}_{\theta_2}]\|_1 = 2 \|\mathbb{V}^T\mathbb{W}_s\mathbb{V}(\theta_2-\theta_1)\|_1\leq L_r\|\theta_2-\theta_1\|_1,
\end{equation}
with $L_r:= 2\|\mathbb{V}^T\mathbb{W}_s\mathbb{V}\|_{\infty}$. Hence, the map $\theta\mapsto \nabla\mathcal{L}^n[\hat{u}_\theta]$ is $L_r$-Lipschitz continuous. 
By Remark~\ref{rk:lam_conv} and Theorem~\ref{thm:erro_dec}, the $j$-th iterate of the explicit Euler gradient flow $\hat{u}_{\theta^j}\in \Pi_{n}(\Omega)$ with timestep $\delta\tau \in \mathbb{R}_+$, converges with the rate
\begin{equation}
    \|\hat{u}_{\theta^j}-h\|_{H^k(\Omega)}^2\leq \epsilon_{\mathrm{app}}(n)+ \epsilon_{\mathrm{int}}(n)+\mathcal{O}\left((1-\sigma/L_r)^{- j}\right).
\end{equation}
Furthermore, applying Propositions \ref{prop:conv_an}, and \ref{prop:abs_cont} for the approximation error, and \ref{thm:sob_cub_s} for the integration error, we obtain that $\epsilon_{\mathrm{app}}(n)+ \epsilon_{\mathrm{int}}(n) = \mathcal{O}(n^{2s-k})$ if $h\in AC^{k-1}(\mathcal{B})\cap H^k(\mathcal{B})$ or $\epsilon_{\mathrm{app}}(n)+ \epsilon_{\mathrm{int}}(n) = \mathcal{O}(\rho^{-n})$ if $h$ is $\rho$-analytic. This implies the existence of recursive functions 
\(e_n:\mathbb{N}\to\mathbb{N}\) and \(e_j:\mathbb{N}\to\mathbb{N}\) such that  
\begin{equation}
    \|\hat{u}_{\theta^j}-h\|_{H^k(\mathcal{B})}^2 \leq 2^{-N},
\end{equation}
for all \(n \geq e_n(N)\) and \(j \geq e_j(N)\). Moreover, the asymptotic 
behavior of these functions satisfies
\[
    e_j(N) = \mathcal{O}\!\left(\frac{N\log 2}{\log(\sigma/L_r)}\right), \qquad
    e_n(N) =
    \begin{cases}
        \mathcal{O}\!\left(2^{N/(k-2s)}\right), & \text{if } h \in AC^{k-1}(\mathcal{B}) \cap H^k(\mathcal{B}), \\[6pt]
        \mathcal{O}\!\left(\tfrac{N\log 2}{\log(\rho)}\right), & \text{if } h \text{ is }\rho\text{-analytic}
    \end{cases}.
\]
In the case that $h$ is $\rho$-analytic, we can achieve a precision of $2^{-N}$ in  $\mathcal{O}(j(n+1)^d)=\mathcal{O}\left(N(N+\log(\rho)/\log(2))^d\right)$ operations. This is polynomial in $N$, thereby proving the claim.  
\end{proof}
\section{Appendix}\label{Appendix_C}
In this section, we include the proofs of Theorem~\ref{thm:prop_poiss_loss}, Lemma~\ref{lemm:grad_SC} and Lemma~\ref{lemm:disc_sol}. 

\vspace{-0em}\begin{proof}[Proof of Theorem~\ref{thm:prop_poiss_loss}]
We start by noting that due to the linearity of the differential operator, the loss $\mathcal{L}$ is convex. Next, we show the weak lower semi-continuity of the functional. 
Let $u,v\in H^1(\Omega)$ and $\mathcal{L}_\partial,\mathcal{L}_b:H^1(\Omega)\rightarrow \mathbb{R}_+$ denote the PDE and boundary condition loss terms defined such that $\mathcal{L}=\mathcal{L}_\partial+\mathcal{L}_b$. Then, by the reverse triangle inequality, the definition of the $H^{-1}$-norm, and the Cauchy-Schwarz inequality, the following holds
\begin{equation}\label{eq:H-1_ineq}
\begin{aligned}
    \left|\mathcal{L}_\partial[u]-\mathcal{L}_\partial[v]\right|&\leq \|\Delta (u-v)\|_{H^{-1}(\Omega)}^2
    = \sup\limits_{w\in H^1_0(\Omega)}\frac{\langle \Delta (u-v), w\rangle_{L^2(\Omega)}^2}{\|w\|_{H^1(\Omega)}^2}\\&
    = \sup\limits_{w\in H^1_0(\Omega)}\frac{\langle \nabla (u-v), \nabla w\rangle_{L^2(\Omega)}^2}{\|w\|_{H^1(\Omega)}^2}\leq \|\nabla(u-v)\|_{L^2(\Omega)}^2\\&
    \leq \|u-v\|_{H^1(\Omega)}^2.
\end{aligned}
\end{equation}
Similarly for the boundary term, using the reverse triangle inequality and the trace theorem, it holds
\begin{align*}
    \left|\mathcal{L}_b[u]-\mathcal{L}_b[v]\right|&\leq \|(u-v)|_{\partial\Omega}\|_{H^{1/2}(\partial\Omega)}^2\leq \|u-v\|_{H^1(\Omega)}^2.
\end{align*}
This implies that $\mathcal{L}$ is $2$-H\"older continuous. Consequently, the loss $\mathcal{L}$ is a convex and strongly continuous functional, and therefore weakly lower semi-continuous. 

Next, we show the coercivity estimate from \eqref{eq:apr_est}. Let $u\in H^1(\Omega)$ and $\mathcal{E}:H^{1/2}(\partial\Omega)\rightarrow H^1(\Omega)$ be the extension operator \cite{Necas2010-yx}. We define $u_b:=\mathcal{E}[u|_{\partial \Omega}]$ and $u_0:=u-u_b$. Then, by Cauchy-Schwarz, the triangle inequality and the extension theorem \cite{HAJLASZ1997221}, the following holds
\begin{equation}\label{eq:est_l2-1}
\begin{aligned}
    \|u\|_{L^2(\Omega)}^2&\leq   \|u_0\|_{L^2(\Omega)}^2+\| u_b\|_{L^2(\Omega)}^2\lesssim \|\nabla u\|_{L^2(\Omega)}^2+\|u_b\|_{H^1(\Omega)}^2\\&
    \lesssim \|\nabla u\|_{L^2(\Omega)}^2+\|u|_{\partial\Omega}\|_{H^{1/2}(\partial\Omega)}^2.
\end{aligned}
\end{equation}
Moreover, we claim that the following holds for all $u_0\in H^1_0(\Omega)$
\begin{equation}
    ||\nabla u_0||_{L^2(\Omega)}\stackrel{!}{\leq} \|\Delta u_0\|_{H^{-1}(\Omega)}.
\end{equation}
Indeed, we have that
\begin{align}
\|\nabla u_0\|_{L^2(\Omega)}^2 &= \langle \nabla u_0,\nabla u_0\rangle_{L^2(\Omega)} =||\nabla u_0||_{L^2(\Omega)}\langle \nabla u_0,\frac{\nabla u_0}{||\nabla u_0||_{L^2(\Omega)}}\rangle_{L^2(\Omega)} \\&
\leq||\nabla u_0||_{L^2(\Omega)}\|\Delta u_0\|_{H^{-1}(\Omega)},
\end{align}
dividing at both sides by $||\nabla u_0||_{L^2(\Omega)}$ yields the claim. 
Here we assumed, without loss of generality, that \(\|\nabla u_0\|_{L^2(\Omega)}^2 > 0\), since the inequality is trivial if \(\|\nabla u_0\|_{L^2(\Omega)}^2 = 0\).
Consequently, we have that
\begin{equation}\label{eq:est_nab-1}
\begin{aligned}
    \|\nabla u\|_{L^2(\Omega)}^2&\leq \|\nabla u_0\|_{L^2(\Omega)}^2+\|\nabla u_b\|_{L^2(\Omega)}^2\\&\leq \|\Delta u\|_{H^{-1}(\Omega)}^2+\|\Delta u_b\|_{H^{-1}(\Omega)}^2+\|\nabla u_b\|_{L^2(\Omega)}^2\\&
    \lesssim \|\Delta u\|_{H^{-1}(\Omega)}^2+\|\nabla u_b\|_{L^2(\Omega)}^2\lesssim  \|\Delta u\|_{H^{-1}(\Omega)}^2+\|u|_{\partial\Omega}\|_{H^{1/2}(\partial\Omega)}^2,
\end{aligned}
\end{equation}
where we used the extension theorem for the last inequality, and the fact that $\|\Delta u_b\|_{H^{-1}(\Omega)}^2\leq \| \nabla u_b\|_{L^2(\Omega)}^2$ following \eqref{eq:H-1_ineq}. Combining the estimates for $\|u\|_{L^2(\Omega)}^2$ and $\|\nabla u\|_{L^2(\Omega)}^2$ yields the estimate in \eqref{eq:apr_est}.

To finish the proof, we show the $\lambda$-convexity of the loss $\mathcal{L}$. Let $j^k:H^k(\Omega)\rightarrow L^2(\Omega)$ be the canonical embedding from $H^k$ to $L^2$ and $j^{k*}:\mathrm{dom}(j^{k^*})\subseteq L^2(\Omega)\rightarrow H^k(\Omega)$ its formal adjoint. Then we have that 
\begin{equation}
    \|u\|_{H^{-k}(\Omega)}^2 = \langle u, j^{k^*} u\rangle_{L^2(\Omega)},
\end{equation}
for all $u\in L^2(\Omega)$. Hence, the variations $d\mathcal{L}_\partial[u]:H^1(\Omega)\rightarrow\mathbb{R}$ of the PDE term at $u\in H^1(\Omega)$, in direction $\delta u\in H^1(\Omega)$, are given by
\begin{equation}
    d\mathcal{L}_\partial[u](\delta u) =  \langle \Delta \delta u, j^{k^*} (\Delta u -f)\rangle_{L^2(\Omega)}+\langle \Delta u -f, j^{k^*} \Delta \delta u \rangle_{L^2(\Omega)}.
\end{equation}
By definition of the $L^2$-gradient $\nabla\mathcal{L}_\partial[u]\in L^2(\Omega)$ and the linearity of the differential operator, the following holds
\begin{equation}
\begin{aligned}
        \langle \nabla\mathcal{L}_\partial[u_2-u_1],u_2-u_1\rangle_{L^2(\Omega)} &=d\mathcal{L}_\partial[u_2-u_1](u_2-u_1)\\&  
        =\langle \Delta (u_2-u_1), j^{k^*} (\Delta (u_2-u_1))\rangle_{L^2(\Omega)}\\&+\langle \Delta (u_2-u_1), j^{k^*} \Delta (u_2-u_1) \rangle_{L^2(\Omega)}.
    \end{aligned}
\end{equation}
Hence 
\begin{equation}
    \langle \nabla\mathcal{L}_\partial[u_2-u_1],u_2-u_1\rangle_{L^2(\Omega)} = 2\|\Delta (u_2-u_1)\|_{H^{-1}(\Omega)}^2.
\end{equation}
Similarly, for the boundary term, it holds
\begin{equation}
    \begin{aligned}
        \langle \nabla\mathcal{L}_b[u_2-u_1],u_2-u_1\rangle_{L^2(\partial\Omega)} &
        =2\|u_2|_{\partial\Omega}-u_1|_{\partial\Omega}\|_{H^{1/2}(\partial\Omega)}^2.
    \end{aligned}
\end{equation}
Therefore, applying the estimates in \eqref{eq:est_nab-1} and \ref{eq:est_l2-1}, we have that 
\begin{equation}
\begin{aligned}
         \frac{1}{2}\langle \nabla\mathcal{L}[u_2-u_1],u_2-u_1\rangle_{\mathcal{H}} & = \|\Delta (u_2-u_1)\|_{H^{-1}(\Omega)}^2+\|u_2|_{\partial\Omega}-u_1|_{\partial\Omega}\|_{H^{1/2}(\partial\Omega)}^2\\&
         \gtrsim \|u_2-u_1\|_{H^1(\Omega)}^2,
\end{aligned}
\end{equation}
with $\langle\cdot,\cdot \rangle_{\mathcal{H}}:=\langle\cdot,\cdot \rangle_{L^2(\Omega)}+\langle\cdot,\cdot \rangle_{L^2(\partial\Omega)}$ the graph inner product, 
yielding the claim that $\mathcal{L}$ is $\lambda$-convex for some $\lambda \in \mathbb{R}_+$. 

To finish the proof we apply Tonelli's direct method introduced in Chapter 1 of~\cite{Maso1993}, to a convex, coercive and lower-semi-continuous functional, yielding the existence of the solution. The uniqueness holds since $\mathcal{L}$ is $\lambda$-convex and therefore strictly convex.  
\end{proof}
Next, we present the proof of Lemma~\ref{lemm:grad_SC}.
\begin{proof}[Proof of Lemma~\ref{lemm:grad_SC}]
Let $\mathbb{D}_{\Delta}\in \mathbb{R}^{|A_{n,d}|\times |A_{n,d}|}$ be the polynomial differentiation matrix associated with the operator $\Delta:\Pi_{n}(\Omega)\rightarrow\Pi_{n}(\Omega)$. Following Proposition~\ref{prop:comp_cub}, we know that $\partial^\beta L_{\alpha}\in \Pi_{n}(\Omega)$ is computable in polynomial-time and therefore 
\begin{equation}
    (\mathbb{D}^\beta)_{i,j}:= \partial^\beta L_{i}(p_j),
\end{equation}
is computable in polynomial-time for all $i,j\in \{1,\dots, |A_{n,d}|\}$.

 We compute the $L^2$-gradient of the loss $\mathcal{L}^n_\theta[\theta]:=\mathcal{L}^n[\hat{u}_\theta]$, by evaluating its first variation in the direction of a perturbation $\delta\theta \in \Theta$, yielding
\begin{equation}\label{eq:var_grad}
    d\mathcal{L}^n_\theta[\theta](\delta \theta) = 2\int\limits_{\Omega}\Delta^*(\Delta \hat{u}_\theta-f_n)\hat{u}_{\delta\theta} dx + 2n\int\limits_{\partial\Omega}( \hat{u}_\theta|_{\partial\Omega}-g_n)\hat{u}_{\delta\theta}|_{\partial\Omega} ds,
\end{equation}
where $\Delta^*:\Pi_{n}(\Omega)\rightarrow\Pi_{n}(\Omega)$ is the adjoint of the Laplacian, restricted to polynomial spaces. 
Consequently, by applying the Sobolev cubatures to the variation in \eqref{eq:var_grad}, the $L^2$-gradient is equal to 
\begin{equation}\label{eq:grad_SC}
 \nabla\mathcal{L}^n_\theta[\theta] = 2\mathbb{T}_\Omega^T\mathbb{D}^T_{\Delta }\mathbb{W}_\Omega(\mathbb{D}_{\Delta }\hat{\mathfrak{u}}_\theta -\mathfrak{f})+2\mathbb{T}^T_{\partial\Omega}\mathbb{W}_{\partial\Omega}(\hat{\mathfrak{u}}_\theta|_{\partial\Omega}-\mathfrak{g}),
\end{equation}
where $\mathfrak{f} = f(P_{n}(\Omega))$, $\mathfrak{g} = g(P_{n}(\partial\Omega))$, $\mathbb{W}_\Omega:=\mathrm{diag}(\{\omega_\alpha\}_{\alpha\in A_{n,d}})$, $\mathbb{W}_\Omega:=\mathrm{diag}(\{\omega_\alpha\}_{\alpha\in A_{n,d}})$, $\mathbb{W}_{\partial\Omega}:=\mathrm{diag}(\{\omega_\alpha\}_{\alpha\in A_{n,d-1}})$, $\mathbb{T}_\Omega:=(T_\alpha(P_{n}(\Omega)))_{\alpha\in A_{n,d}}$, $\mathbb{T}_{\partial\Omega}:=(T_\alpha(P_{n}(\partial\Omega)))_{\alpha\in A_{n}}$, $\hat{\mathfrak{u}}_\theta:=\mathbb{T}_\Omega\theta$, $\hat{\mathfrak{u}}_\theta:=\mathbb{T}_{\partial\Omega}\theta$.
\\
The rest of the proof is a consequence of Proposition~\ref{prop:comp_cub}, which implies that the matrices and vectors from \eqref{eq:grad_SC} are computable in polynomial-time. Hence, the gradient is computable in polynomial-time. 
\end{proof}
Now, we show the non-differentiability of the signed distance function.
\begin{proof}[Proof of Lemma~\ref{lemm:disc_sol}.]
Let $c \in M_S$, and let $x_l, x_r \in S$ be two distinct closest points, i.e.
\[
\|c - x_l\|_1 = \|c - x_r\|_1 = \phi_S(c).
\]

Define
\[
z_l^\delta := c - \delta (c - x_l), 
\qquad 
z_r^\delta := c - \delta (c - x_r),
\]
for $0 < \delta < 1$, and let $x_l^\delta, x_r^\delta \in S$ be the closest points to $z_l^\delta, z_r^\delta$ respectively. Then we have that $z_l^\delta\to c$ and $z_r^\delta\to c$ as $\delta \to 0$. Moreover, since $S$ is compact, there exists a subsequence such that $x_l^\delta\to \overline{x}_l$ and $x_r^\delta \to\overline{x}_r$, for $\overline{x}_l,\overline{x}_r\in S$.

We prove that $\overline{x}_l=x_l$. By optimality of $x_l^\delta$ we have
\begin{align}
    \|z_l^\delta-x_l^\delta\|_1 \leq \|z_l^\delta-x_l\|_1.
\end{align}
Taking the limit as $\delta\to 0$, and using the continuity of the norm, we obtain
\begin{align}
    \|c-\overline{x}_l\|_1 \leq \|c-x_l\|_1.
\end{align}
Since $x_l$ is a closest point to $c$, we also have
\begin{align}
    \|c-x_l\|_1 \leq \|c-\overline{x}_l\|_1,
\end{align}
and therefore $\|c-\overline{x}_l\|_1 = \|c-x_l\|_1$.

The minimizer of $\|c-x\|_1$ is not unique, therefore it can happen that $\overline{x}_l\neq x_l$. To show that this is not the case, we prove that 
\begin{equation}
    \|z_l^\delta -x_l\|_2<\|z_l^\delta-x\|_2,\ \forall x\in S\textrm{, s.t. }\|c-x\|_1 = \|c-x_l\|_1,\ x\neq x_l.
\end{equation}
To show this, let $x\in S$ such that $\|c-x_l\|_1=\|c-x\|_1$, then we compute 
\begin{align}
    \|z_l^\delta -x_l\|_2^2 = (1-\delta)^2\|c-x_l\|_2^2,
\end{align}
and
\begin{align}
    \|z_l^\delta-x\|_2^2= \|c-x\|_2^2-2\delta \langle c-x,c-x_l\rangle_2+\delta^2\|c-x_l\|_2^2.
\end{align}
Since $\|c-x_l\|_2^2 = \|c-x\|_2^2$, we obtain
\begin{align*}
    \|z_l^\delta -x_l\|_2<\|z_l^\delta-x\|_2
    \iff \|c-x_l\|_2^2 > \langle c-x,c-x_l\rangle_2.
\end{align*}
By the Cauchy--Schwarz inequality, $\langle c-x,c-x_l\rangle_2 \leq \|c-x\|_2\|c-x_l\|_2 = \|c-x_l\|_2^2 = ||c-x||_2^2$, and since $x\neq x_l$, the inequality is strict. 

Thus, $x_l$ is the unique closest point to $z_l^\delta$ for $\delta$ sufficiently small, and therefore $\overline{x}_l=x_l$. The proof for $x_r^\delta$ is analogous.

Finally, since $\phi_S$ is Lipschitz, it is differentiable almost everywhere. Hence, we can choose sequences $(z_l^{\delta_k} )_{k\in \mathbb{N}},(z_r^{\delta_k} )_{k\in \mathbb{N}}$ such that $\phi_S$ is differentiable at these points. At such points, the gradient is given by
\[
\nabla \phi_S(z_l^\delta) = \frac{z_l^\delta - x_l}{\|z_l^\delta - x_l\|_2}, 
\qquad
\nabla \phi_S(z_r^\delta) = \frac{z_r^\delta - x_r}{\|z_r^\delta - x_r\|_2}.
\]

Passing to the limit as $\delta \to 0$, we obtain
\[
\nabla \phi_S(z_l^\delta) \to \frac{c - x_l}{\|c - x_l\|_2}, 
\qquad
\nabla \phi_S(z_r^\delta) \to \frac{c - x_r}{\|c - x_r\|_2}.
\]

Since $x_l \neq x_r$, these limits differ, and therefore $\phi_S$ is not differentiable at $c$.
\end{proof}
We finish this section by proving Theorem~\ref{thm:prop_eik_loss}
\vspace{-1em}\begin{proof}[Proof of Theorem~\ref{thm:prop_eik_loss}]\label{proof-thm_eik}
 We start by showing the H\"older continuity of the loss. Using the reverse triangle inequality, the following holds
\begin{align*}
    |\mathcal{L}[u]-\mathcal{L}[v]|&\leq \bigr\|\|\nabla u-\nabla v\|_1\bigr\|_{L^2(\Omega)} + \|u|_S-v|_S\|_{H^{1/2}(S)}\\&
    \leq \|u-v\|_{H^1(\Omega)},
\end{align*}
where we used the triangle inequality and the trace theorem for the last inequality. 
To prove the coercivity estimate, we define $u_S = \mathcal{E}[u|_{S}]\in H^1(\Omega)$ and $u_0:=(u-u_S)\in H^1_0(\Omega)$. Then, by the extension theorem, the triangle and Poincar\'e inequalities, the following holds
\begin{equation}\label{eq:pf1_ceik}
\|u\|_{H^1(\Omega)}\lesssim \|\nabla u\|_{L^2(\Omega)}+\|u_S\|_{H^1(\Omega)}\lesssim \|\nabla u\|_{L^2(\Omega)}+\|u|_S\|_{H^{1/2}(S)}.
\end{equation}
Moreover, by definition of the PDE term and the reverse triangle inequality, we have that 
\begin{align}\label{eq:pf2_ceik}
    \mathcal{L}_\partial[u] &:=  \bigr\|\|\nabla u\|_1-1\bigr\|_{L^2(\Omega)} \geq \left(\|\nabla u\|_{L^2(\Omega)}^2+\sum\limits_{\stackrel{i,j=1}{i\neq j}}^d\int\limits_\Omega |\partial_{x_i}u| |\partial_{x_j}u|dx\right)^{1/2}-|\Omega|\\&
    \geq \|\nabla u\|_{L^2(\Omega)}-|\Omega|.
\end{align}
Hence, combining~\eqref{eq:pf1_ceik} and ~\eqref{eq:pf1_ceik}, the following holds
\begin{align}
     \mathcal{L}[u]+1\gtrsim \mathcal{L}[u]+|\Omega| &\geq \|\nabla u \|_{L^2(\Omega)}+\|u|_S\|_{H^{1/2}(S)}\\&
    \gtrsim \|u\|_{H^1(\Omega)}.
\end{align}
 Note that, in general, the Eikonal loss 
\[
\mathcal{L}:H^{1}(\Omega)\rightarrow\mathbb{R}_{+}
\]
is neither convex nor weakly lower semi-continuous. Consequently, with the variational tools presented in this paper, we cannot guarantee the existence of a minimizer. 

To address this limitation, we can use the classical result that the signed distance function $\phi$ is the viscosity solution of the Eikonal equation \cite{Katzourakis2014-fe}, and consequently the Eikonal loss is minimized at $\phi\in H^1(\Omega)$. 

Alternatively, we can consider a convex relaxation of the loss
\begin{equation}\label{eq:convex_eikonal_loss}
    \mathcal{L}_c[u]:=||\left(||\nabla u||_1-1\right)_+||_{L^2(\Omega)}+||u|_S||_{H^{1/2}(S)},
\end{equation}
with $(u)_+(x):=\max\{u(x),0\}$. This yields a convex, weakly lower-semi continuous functional satisfying the apriori estimate in~\eqref{eq:apr_est_eik}. Consequently by Tonelli's direct method there exists a solution $u^*\in H^1(\Omega)$ to \eqref{eq:convex_eikonal_loss}. 

Finally, as another alternative, we can consider the modified regularized Eikonal equation 
\begin{equation}\label{eq:visc_eik}
\mathcal{L}_\mu[u]:=\left\||\nabla u|^2-1 \right\|_{L^2(\Omega)}^2 + ||u|_S||_{H^{1/2}(S)}^2+\mu ||u||_{H^1(\Omega)}^2.
\end{equation}
In this setting, the additional regularization term $\mu\| u\|_{H^1(\Omega)}^2$
induces local strong convexity of the functional in a neighborhood of its minimizer,
provided that $\mathcal{L}_\mu[u]$ remains sufficiently small.
As a consequence, one may establish the existence of a local minimizer and obtain a
local quadratic growth condition, which in turn can be used to establish $H^1$-computability results.
A detailed analysis of these relaxed variational problems, as well as the convergence
properties of their associated gradient flows with respect to the signed distance function $\phi$,
is beyond the scope of the present work.
\end{proof}
\bibliography{REF.bib}
\bibliographystyle{unsrt}
\end{document}